
\documentclass{amsart}

\usepackage{amsthm,amsfonts,amsmath,amssymb,mathrsfs,tensor}
\usepackage[all]{xy}
\usepackage{pstricks}


\newtheorem{thm}{Theorem}[subsection]
\newtheorem{lem}[thm]{Lemma}
\newtheorem{prop}[thm]{Proposition}

\newtheorem{defn}[thm]{Definition}

\newenvironment{rmk}{\refstepcounter{thm} \medskip \noindent {\bf  Remark \arabic{section}.\arabic{subsection}.\arabic{thm}.\,}}{\hfill\mbox{}\bigskip}

\newtheorem{thmint}{Theorem}

\newtheorem{conjint}[thmint]{Conjecture}

\newcounter{num}

\newenvironment{thmlist}{\begin{list}{(\roman{num})}{\usecounter{num}\setlength{\leftmargin}{25pt}
\setlength{\itemindent}{0pt}\setlength{\labelwidth}{20pt}\setlength{\labelsep}{5pt}\setlength{\itemsep}{0in}}}{\end{list}}

\def\t{\mathfrak{t}}

\def\TT{\mathbb{T}}
\def\PP{\mathbb{P}}
\def\R{\mathbb{R}}
\def\C{\mathbb{C}}
\def\N{\mathbb{N}}
\def\Z{\mathbb{X}}
\def\Z{\mathbb{Z}}

\def\i{\sqrt{-1}}

\def\om{\omega}

\def\del{\partial}
\def\delb{\overline{\partial}}

\newcommand{\cA}{\mathcal{A}}
\newcommand{\cB}{\mathcal{B}}

\newcommand{\cD}{\mathcal{D}}

\newcommand{\cJ}{\mathcal{J}}

\newcommand{\cK}{\mathcal{K}}

\newcommand{\cP}{\mathcal{P}}
\newcommand{\cF}{\mathcal{F}}

\newcommand{\cW}{\mathcal{W}}
\newcommand{\XX}{\mathcal{X}}
\newcommand{\cY}{\mathcal{Y}}

\newcommand{\cG}{\mathcal{G}}
\newcommand{\cL}{\mathcal{L}}
\newcommand{\cO}{\mathcal{O}}

\newcommand{\cS}{\mathcal{S}}

\newcommand{\ccG}{\mathcal{G}^{\C}}
\newcommand{\Lie}{\operatorname{Lie}}

\newcommand{\cH}{\mathcal{H}}
\newcommand{\re}{\operatorname{Re}}

\newcommand{\Aut}{\operatorname{Aut}}

\newcommand{\Cal}{\operatorname{Cal}}

\newcommand{\Fut}{\operatorname{Fut}}

\newcommand{\End}{\operatorname{End}}
\newcommand{\Id}{\operatorname{Id}}

\newcommand{\im}{\operatorname{Im}}
\newcommand{\Ord}{\operatorname{Ord}}
\newcommand{\Ric}{\operatorname{Ric}}

\newcommand{\skw}{\operatorname{skw}}
\newcommand{\tr}{\operatorname{tr}}

\newcommand{\U}{\operatorname{U}}

\newcommand{\Sp}{\operatorname{Sp}}
\newcommand{\LSp}{\operatorname{\mathfrak{sp}}}
\newcommand{\G}{\operatorname{G}}

\newcommand{\contr}{\,\lrcorner\,}

\newcommand{\ol}[1]{\overline{#1}}

\begin{document}

\title[deformations of cscS metrics]
{Deformations of constant scalar curvature Sasakian metrics and K-stability}
\author[C. Tipler]{Carl Tipler}
\author[C. van Coevering]{Craig van Coevering}
\address{D\'epartement de Math\'ematiques, Universit\'e de Bretagne Occidentale, 6, avenue Victor Le Gorgeu, 29238 Brest Cedex 3 France}
\address{Department of Mathematics 15-01, U.S.T.C., Anhui, Hefei 230026, P. R. China}
\email{carl.tipler@univ-brest.fr ; craigvan@ustc.edu.cn}

\begin{abstract}
Extending the work of G. Sz\'ekelyhidi and T. Br\"{o}nnle to Sasakian manifolds we prove that
a small deformation of the complex structure of the cone of a constant scalar curvature Sasakian manifold admits a constant scalar curvature structure if it is K-polystable.  This also implies that a small deformation of the complex structure of the cone of a
constant scalar curvature structure is K-semistable.  As applications we give examples of constant scalar curvature Sasakian manifolds which are deformations of toric examples, and we also show that if a 3-Sasakian manifold admits a non-trivial transversal complex deformation then it admits a non-trivial Sasaki-Einstein deformation.
\end{abstract}

\maketitle

\section{Introduction}\label{sec:intro}

The existence of a K\"{a}hler-Einstein metric on a compact K\"{a}hler manifold $X$ with $c_1(X)>0$ by a famous conjecture
of S.-T. Yau~\cite{Yau93} is thought to be equivalent to some geometric invariant theory (GIT) notion of stability of $X$.
G. Tian later introduced the notion of K-stability~\cite{Tian97} and showed that it is a necessary condition for the existence of a
K\"{a}hler-Einstein metric.  Then S. Donaldson extended the notion of K-stability to any K\"{a}hler manifold whose
K\"{a}hler class is $c_1(\mathbf{L})$ for an ample line bundle $\mathbf{L}$.
The Yau-Tian-Donaldson conjecture states that the existence of a constant scalar curvature
K\"{a}hler (cscK) metric in $c_1(\mathbf{L})$ should be equivalent to the K-stability, or K-polystablity when there are non-trivial
holomorphic vector fields, of the polarized variety $(X,\mathbf{L})$.  It is known from the work of S. Donaldson~\cite{Don05},
J. Stoppa~\cite{Sto09} and T. Mabuchi~\cite{Mab08} that the existence of a cscK metric implies K-stability.

Since a Sasakian manifold is essentially an odd dimensional analogue of a K\"{a}hler manifold, and furthermore the category
of polarized K\"{a}hler manifolds $(X,\mathbf{L})$ embeds in the former as the class of regular Sasakian manifold, it is natural
to make a similar conjecture for Sasakian manifolds.

Besides its similarity with K\"{a}hler geometry interest in Sasakian manifolds
has had two main motivations.  First, the work in Sasaki-Einstein manifolds in the last fifteen years has been very prolific in producing examples of positive scalar curvature Einstein manifolds.  See for example~\cite{BGMR98,BoyGalKol05,Kol05}, and the recent survey~\cite{Spa11} and its references, since the literature is too vast to list.
The second impetus has been from theoretical physics with AdS/CFT correspondence which, in the most interesting dimensions,
provides a duality between field theories and string theories on $\operatorname{AdS}_5 \times M$ where supersymmetry requires the
five dimensional Einstein manifold $M$ to have a real Killing spinor (see~\cite{Mal98,KleWit99,AFHS98,MorPle99}).

A polarization of a Sasakian manifold $M$ is given by a Reeb vector field $\xi$ and the cone $Y=(C(M),\xi)$ is an affine variety polarized by $\xi$.  T. Collins and G. Sz\'ekelyhidi~\cite{ColSze12}
defined the notion of a test configuration for a polarized affine variety $(Y,\xi)$ and were able to define K-polystability
by extending the Futaki invariant to singular polarized affine varieties.
They proved that existence of a constant scalar curvature Sasakian (cscS) metric implies K-semistability.
This generalizes earlier work of J. Ross and R. Thomas~\cite{RosTho11} which defined K-stability for polarized K\"{a}hler orbifolds
and proved that the existence of a cscK orbifold metric implies K-semistability.  Collins and
Sz\'ekelyhidi extended this result to irregular Sasakian manifolds.

There are already well known obstructions to the existence of a Sasaki-Einstein or cscS metric on a polarized Sasakian manifold.
There is the Futaki invariant~\cite{BoyGalSim08,FutOnoWan09} which is defined just as for K\"{a}hler manifolds, and there are
the volume minimization results of Martelli, Sparks and Yau~\cite{MarSpaYau08}.  Also there are the Lichnerowicz and
Bishop obstructions of~\cite{GMSY07}, which are only non-trivial for non-regular Sasakian manifolds.  But most of the research in Sasaki-Einstein manifolds has concentrated on proving existence of examples using sufficient conditions which are probably far from necessary.  However the above results make the following conjecture natural.

\begin{conjint}\label{conj:Sasak-Yau-conj}
Let $(M,\xi)$ be a Sasakian manifold polarized by the Reeb vector field $\xi$ with polarized affine cone $(Y,\xi)$.
Then there exists a cscS structure compatible with $\xi$ and the fixed complex cone $Y=C(M)$ if and only if
$(Y, \xi)$ is K-polystable.
\end{conjint}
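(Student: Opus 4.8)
The plan is to treat the conjecture as the Sasakian incarnation of the Yau--Tian--Donaldson correspondence and to prove the two implications by quite different means. The forward direction, that a cscS structure forces K-polystability, is the more tractable one: since Collins and Sz\'ekelyhidi have already established K-semistability, I would only need to upgrade semistability to polystability by transporting the Stoppa--Mabuchi obstruction to the polarized affine cone $(Y,\xi)$. Concretely, to each test configuration in the Collins--Sz\'ekelyhidi sense I would associate a weak geodesic ray in the space $\cH$ of transverse K\"{a}hler potentials compatible with $\xi$, show that the transverse Mabuchi K-energy is asymptotically affine along this ray with slope the normalized Donaldson--Futaki invariant, and then use convexity of the K-energy together with the existence of a cscS metric (its critical point) to force this slope to be nonnegative, vanishing only along rays generated by one-parameter subgroups of $\Aut(Y,\xi)$. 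This last equality analysis is precisely what promotes semistability to polystability.

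The reverse direction, that K-polystability produces a cscS metric on the fixed complex cone compatible with $\xi$, is where the substance lies, and I would attack it variationally. First I would recast existence in transverse K\"{a}hler terms: a cscS structure compatible with $\xi$ is a transverse K\"{a}hler metric of constant transverse scalar curvature in the fixed basic class, so the problem reduces to solving a fourth-order fully nonlinear equation for a single basic potential in $\cH$. The strategy is then twofold---show that K-polystability implies coercivity of the transverse Mabuchi functional modulo $\Aut(Y,\xi)$, and that such coercivity forces a minimizer to exist.

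For the coercivity step I would follow the non-archimedean philosophy of Boucksom--Hisamoto--Jonsson, expressing the asymptotic slopes of the Mabuchi functional and of its entropy and energy pieces as non-archimedean functionals on the Berkovich analytification of $(Y,\xi)$, with the Donaldson--Futaki invariants of test configurations recovered on the dense divisorial locus. The delicate point is to pass from K-polystability, a condition tested only on test configurations, to a uniform lower bound along all geodesic rays and filtrations---in effect a ``polystable implies uniformly stable'' statement, which even in the projective cscK case is known only under extra hypotheses. For the existence step I would establish a Sasakian analogue of the Chen--Cheng a priori estimates, bootstrapping $C^0$ and higher control of the basic potential from an entropy/K-energy bound so that a continuity method or the transverse Calabi flow converges; the foliated structure means one cannot descend to a smooth quotient in the irregular case and must argue basic-equivariantly throughout.

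The principal obstacle is the reverse direction, and within it the implication from the algebraic K-polystability of $(Y,\xi)$ to analytic coercivity of the K-energy. Test configurations alone do not control the functional along non-algebraic degenerations, and closing this gap is exactly the uniform-stability question that remains open for the general-polarization cscK problem; any complete proof must either resolve it or abandon the variational route in favour of a Gromov--Hausdorff and partial-$C^0$ compactness argument in the spirit of Chen--Donaldson--Sun, with the algebraic geometry of metric tangent cones of the cone $Y$ replacing the Fano limit analysis.
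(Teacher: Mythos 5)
You were asked to prove Conjecture~\ref{conj:Sasak-Yau-conj}, but note that the paper explicitly leaves this statement open: it is presented as a conjecture, nowhere proved in the text, and the authors only establish a local form of it. So there is no paper proof to measure your argument against; what can be assessed is whether your proposal closes the conjecture, and it does not --- by your own admission. The passage from K-polystability of $(Y,\xi)$ to coercivity of the transverse Mabuchi functional (or, in your alternative route, a Sasakian Chen--Donaldson--Sun compactness theory) is precisely the open heart of the problem, and the supporting machinery you invoke --- a Sasakian analogue of the Chen--Cheng estimates, non-archimedean functionals on the analytification of the cone, basic-equivariant analysis in the irregular case --- is all unestablished. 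The forward direction is also less tractable than you suggest: Collins--Sz\'ekelyhidi~\cite{ColSze12} gives only semistability, your geodesic-ray equality analysis is not carried out anywhere in the Sasakian literature, and the Stoppa--Mabuchi upgrades~\cite{Sto09,Mab08} rest on projective blow-up and GIT techniques with no known transversal or irregular counterpart; indeed the paper itself remarks, after Proposition~\ref{prop:finiteGIT}, that the necessity of polystability (and uniqueness) remains open even for small deformations of a cscS structure.

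What the paper actually proves is a deformation-theoretic, local version of the conjecture, by a completely different and essentially finite-dimensional route: fixing a cscS structure $(M,\eta,\xi,\Phi_0)$, it constructs a Kuranishi-type slice for the complex $\cB^\bullet$ (Proposition~\ref{prop:slice}), perturbs the slice along $\ccG$-orbits so that the moment map $\mu=s^T-s^T_0$ takes values in $\Lie(G)$ with $G=\Aut(M,\eta,\xi,\Phi_0)$ (Proposition~\ref{prop:map}), and then runs a finite-dimensional Kempf--Ness argument on $H^1(\cB^\bullet)$ (Proposition~\ref{prop:finiteGIT}) to conclude that K-polystable small deformations admit cscS structures (Theorem~\ref{thm:csc-def}) and that all small deformations are K-semistable (Theorem~\ref{thm:K-semi-def}), with Lemma~\ref{lem:smooth} identifying the algebraic Donaldson--Futaki invariant with the transversal Futaki invariant on smooth central fibers. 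If your goal is the conjecture itself, you have written a reasonable survey of the Yau--Tian--Donaldson program transplanted to Sasakian geometry, but the two unproved implications you identify are genuine gaps, not technical details; if your goal is what this paper establishes, the global variational machinery should be replaced by the local slice and moment-map reduction just described.
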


This article is concerned with a local study of stability.  That is given a cscS manifold $(M,\xi,g)$ with its polarized
affine cone $(Y,\xi)$, we consider small deformations of the complex structure $(Y_t, \xi)_{t\in\mathcal{U}}$ preserving $\xi$.
We use the contact perspective of W. He~\cite{He11}, which is analogous to the study of K\"{a}hler structures by fixing
the symplectic form and varying the almost complex structure due to S. Donaldson~\cite{Don97}.  Thus we fix a contact
manifold $(M,\eta,\xi)$ and consider the space $\cK$ of $(1,1)$-tensors $\Phi$ inducing the \emph{transversal almost complex
structure} of a structure $(g_{\Phi},\eta,\xi,\Phi)$ which is Sasakian if $\Phi$ is integrable.  The action of the exact contactomorphism group $\cG$ is then Hamiltonian with moment map
\[ \mu: \cK \rightarrow C_b^{\infty}(M)_0, \]
$\mu(\Phi) = s-s_0$ the scalar curvature of $g_{\Phi}$ and $C_b^{\infty}(M)_0$ are smooth functions invariant under the Reeb flow
with zero average.

If one fixes a transversal complex structure $\ol{J}$ one
gets the space of compatible Sasakian structures $\cS(\xi, \bar J)$, which is analogous to the space of K\"{a}hler metrics
in a fixed K\"{a}hler class.   We are able to prove the following.
\begin{thmint}\label{thmint:main}
Let $(M,\eta,\xi,\Phi_0)$ be a cscS manifold and $(M,\eta,\xi,\Phi)$ a nearby Sasakian manifold with
transverse complex structure $\bar J$. Then if $(M,\eta,\xi,\Phi)$ is K-polystable, there is a constant scalar curvature Sasakian structure in the space $\cS(\xi, \bar J)$.
\end{thmint}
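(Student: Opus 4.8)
The plan is to follow the moment-map and finite-dimensional GIT strategy of Br\"onnle and Sz\'ekelyhidi, transported to the transverse Sasakian setting encoded by the moment map $\mu(\Phi)=s-s_0$ for the Hamiltonian action of $\cG$ on $\cK$. In outline: (i) build a Kuranishi slice for the integrable deformations of $\Phi_0$; (ii) reduce the equation $\mu=0$ on this slice to a finite-dimensional moment-map equation for the automorphism group of $\Phi_0$; (iii) match the algebro-geometric K-polystability of Collins--Sz\'ekelyhidi with finite-dimensional GIT polystability; and (iv) invoke Kempf--Ness to produce a zero of $\mu$, that is a cscS structure, inside $\cS(\xi,\bar J)$.

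First I would set up the transverse deformation theory. With $(M,\eta,\xi)$ fixed, the integrable elements of $\cK$ near $\Phi_0$ are governed by a transverse Kodaira--Spencer complex whose linearization is the operator $\delb_B$ acting on basic sections of the transverse holomorphic tangent sheaf. Transverse Hodge theory makes this complex elliptic, so that $H^1$ is finite-dimensional and one obtains a Kuranishi family $\cB\subset H^1$, cut out by a holomorphic obstruction map, which is a slice for the $\cG$-action and parametrizes the nearby Sasakian structures modulo exact contactomorphisms. Because $\Phi_0$ is cscS, the relevant transverse Lichnerowicz operator is self-adjoint and elliptic with kernel the transverse holomorphy potentials; the transverse analogue of the Matsushima--Lichnerowicz theorem then gives that the reduced automorphism group $G$ of $\Phi_0$ is reductive, with complexification $G^{\C}$ acting holomorphically on $H^1$ and preserving $\cB$.

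Next I would carry out the finite-dimensional reduction. Restricting $\mu$ to the slice and applying the implicit function theorem --- using that the transverse Lichnerowicz operator is an isomorphism transverse to the holomorphy potentials --- the equation $\mu=0$ on $\cB$ descends to a finite-dimensional moment-map equation $\mathbf{m}\colon\cB\to\g^*$ for the Hamiltonian $G$-action, whose zeros correspond exactly to cscS structures near $\Phi_0$ with the given transverse complex structure. By Kempf--Ness, the point of $\cB$ determined by $\bar J$ admits a cscS representative in its $G^{\C}$-orbit, hence lies in $\cS(\xi,\bar J)$, precisely when that orbit is polystable in the sense of finite-dimensional GIT. The final and hardest step is to deduce this GIT polystability from the K-polystability of $(M,\eta,\xi,\Phi)$: each one-parameter subgroup of $G^{\C}$ generates a product-type test configuration for the polarized cone $(Y,\xi)$, and one must show that its Donaldson--Futaki invariant agrees, up to a fixed positive constant, with the Mumford weight of the Kempf--Ness functional along that subgroup. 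I expect this identification --- connecting the index-character and Hilbert-series asymptotics that define the Collins--Sz\'ekelyhidi Futaki invariant to the transverse scalar-curvature moment map $\mu$, uniformly in the possibly irregular Reeb field $\xi$ --- to be the principal obstacle, as it forces the purely algebraic stability notion to see exactly the analytic weights of the finite-dimensional reduction. Granting it, K-polystability yields GIT polystability, Kempf--Ness supplies a zero of $\mathbf{m}$ in the orbit, and unwinding the slice construction produces the desired constant scalar curvature Sasakian structure in $\cS(\xi,\bar J)$.
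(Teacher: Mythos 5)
Your outline reproduces the paper's architecture --- Kuranishi slice for the complex $\cB^\bullet$, perturbation of the slice so that the moment map $\mu=s^T-s^T_0$ takes values in $\Lie(G)$, and a finite-dimensional GIT argument --- but the step you explicitly defer (``Granting it\dots'') is precisely the content of the theorem, and the route you sketch for filling it is not the one the paper uses and is substantially harder than necessary. You propose to identify the Donaldson--Futaki invariant of \emph{every} product-type test configuration arising from a one-parameter subgroup of $G^{\C}$ with the Mumford weight of the Kempf--Ness functional. No such general weight identification is needed. The paper argues by contraposition: if the point $x\in H^1(\cB^\bullet)$ with $\hat{S}(x)=\Phi$ is \emph{not} polystable, choose a one-parameter subgroup $\rho$ of $G^{\C}$ with $\lim_{\lambda\to 0}\rho(\lambda)\cdot x=x'$ polystable in the orbit closure. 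This produces a test configuration $\cY=Y\times\Delta$ with smooth central fiber $Y_0$ corresponding to $x'$, and by the finite-dimensional result (Proposition~\ref{prop:finiteGIT}) applied to $x'$ the central fiber carries a cscS structure. Lemma~\ref{lem:smooth} --- which computes the Donaldson--Futaki invariant only for \emph{smooth} central fibers, where it reduces to the classical transversal Futaki invariant up to a positive constant --- then forces $\Fut(Y_0,\xi,\upsilon)=0$. Since the stabilizer jumps at $x'$, the configuration is not a product, so $(M,\eta,\Phi)$ is not K-polystable. This is Sz\'ekelyhidi's trick: one only ever evaluates the algebraic invariant at a limit point already known to admit a canonical metric.

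Two further points you gloss over. First, the pulled-back symplectic form $\Omega=\hat{S}^*\Omega_{\cK}$ on the finite-dimensional slice is \emph{not} K\"ahler, and the pulled-back moment map is not the linear one, so Kempf--Ness does not apply directly as you assert; Proposition~\ref{prop:finiteGIT} has to compare $\mu$ with the flat moment map $\nu$ via a Taylor expansion $\mu(tx)-\nu(tx)=o(t^2)$ and run a local minimization of the Calabi functional $\|\mu\|^2$ on slices through $tx_0$ to locate an actual zero of $\mu$. Second, your appeal to a transverse Matsushima--Lichnerowicz theorem for reductivity of $G$ is unnecessary: $G=\Aut(M,\eta,\xi,\Phi_0)$ is a closed subgroup of the isometry group of $g_{\Phi_0}$, hence compact, and compact groups complexify. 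Your claim that the cscS representative exists ``precisely when'' the orbit is polystable also overstates what is known: the necessity and uniqueness are established in the K\"ahler (regular) case by Chen--Sun but remain open in the Sasakian setting; only the sufficiency is proved here, and only that is needed.
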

The proof of Theorem~\ref{thmint:main} uses a technique of G. Sz\'ekelyhidi~\cite{Sze10} of constructing a finite dimensional
slice to the action of the ``complexification'' $\ccG$ of the contactomorphism group, and reducing the problem to one
of finite dimensional geometric invariant theory.  T. Br\"{o}nnle~\cite{Bro11} also obtained similar results independently.

An application of this technique is in determining when a small deformation $\ol{J}$ of the transversal complex structure of
a cscS metric $(M,\eta,\xi,\Phi_0)$ admits a compatible cscS structure in $\cS(\xi, \bar J)$.  This is reduced to the
polystability of the corresponding orbit of $G^{\G}$, $G=\Aut(\eta,\xi,\Phi_0)$, on $H^1(\cB^\bullet)$ where $\cB^\bullet$ is
the appropriate deformation complex.  This problem has been considered by the second author using analytic methods
in~\cite{vanCo12b,vanCo13}.

A consequence of this technique is the following result extending the result of~\cite{ColSze12}.
\begin{thmint}
Let $(M,\eta,\xi,\Phi_0)$ be a cscS manifold.  Then a small deformation $(M,\eta,\xi,\Phi)$ fixing the Reeb vector field is
K-semistable.
\end{thmint}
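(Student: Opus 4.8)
The plan is to read off K-semistability from the finite-dimensional reduction already used for Theorem~\ref{thmint:main}. Since the deformation fixes $\xi$, the slice construction represents $(M,\eta,\xi,\Phi)$ by a point $v$ in a neighbourhood of $0$ in $H^1(\cB^\bullet)$, where $0$ corresponds to the cscS structure $\Phi_0$ and $G^{\C}$, with $G=\Aut(\eta,\xi,\Phi_0)$, acts fixing $0$. The scalar curvature moment map $\mu$ descends to a moment map on $H^1(\cB^\bullet)$ with $\mu(0)=0$, and the relevant test configurations are those generated by elements $\zeta\in\g$. The Donaldson--Futaki invariant of such a test configuration is the maximal weight
\[
w_\zeta(v)=\lim_{t\to+\infty}\langle\mu(e^{t\zeta}\cdot v),\zeta\rangle ,
\]
so, by the Hilbert--Mumford/Kempf--Ness criterion, it suffices to prove $w_\zeta(v)\ge 0$ for every $\zeta\in\g$ and every $v$ close to $0$.

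First I would dispose of the non-degenerate directions. As the $G^{\C}$-action is linear to first order and $\mu(0)=0$, the moment map is quadratic to leading order: decomposing $v=\sum_\alpha v_\alpha$ into $\zeta$-weight spaces with real weights $\rho_\alpha$,
\[
\langle\mu(e^{t\zeta}\cdot v),\zeta\rangle=\tfrac12\sum_\alpha\rho_\alpha e^{2\rho_\alpha t}\|v_\alpha\|^2+O(\|v\|^3).
\]
The quadratic part is non-decreasing in $t$ and tends to $+\infty$ as soon as $v$ meets a strictly positive weight space; in that case $w_\zeta(v)=+\infty$, a conclusion untouched by the higher order terms.

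The remaining, and genuinely delicate, case is that of the degenerate directions, where $v$ has only non-positive $\zeta$-weights. Then $e^{t\zeta}\cdot v$ converges as $t\to+\infty$ to the $\zeta$-fixed central fibre $v_\infty^\zeta$, and by continuity
\[
w_\zeta(v)=\langle\mu(v_\infty^\zeta),\zeta\rangle ,
\]
which is the transverse Futaki invariant of the Sasakian structure $v_\infty^\zeta$ evaluated on its automorphism $\zeta$. The hard part will be to show this residual invariant is $\ge 0$, and this is where the cscS hypothesis on $\Phi_0$ enters. The idea is that, the transverse Kähler class (fixed by $\eta$ and $\xi$) being unchanged along the slice, the Futaki invariant is a deformation invariant, so $\langle\mu(v_\infty^\zeta),\zeta\rangle$ equals the Futaki invariant of $\Phi_0$ in the direction $\zeta\in\g$; since $\Phi_0$ is cscS one has $\mu(\Phi_0)=0$ and this vanishes. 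Making the identification of $w_\zeta(v)$ with the central-fibre Futaki invariant precise, and justifying its deformation invariance in the transverse Sasakian setting, is the main technical obstacle; granting it, $w_\zeta(v)\ge 0$ in all directions and $(M,\eta,\xi,\Phi)$ is K-semistable.
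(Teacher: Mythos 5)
Your proposal replaces the paper's argument with a Hilbert--Mumford/maximal-weight computation on the slice, and this has a genuine gap at its foundation: K-semistability of $(Y,\xi)$ (Definition in \S\ref{subsec:K-stab}) quantifies over \emph{all} $T$-equivariant test configurations --- arbitrary homogeneous generators and integer weights, with possibly singular, non-normal central fibers --- and these are not all induced by one-parameter subgroups of $G^{\C}$ acting on the finite-dimensional space $H^1(\cB^\bullet)$. Reducing the statement to ``$w_\zeta(v)\ge 0$ for every $\zeta\in\g$'' is therefore not justified; even completed, your argument would only give semistability with respect to the restricted class of degenerations visible in the slice (and in the non-degenerate case, where $e^{t\zeta}\cdot v$ has no limit in $U$, no test configuration is actually being produced or estimated). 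The paper avoids enumerating test configurations altogether: it invokes the Collins--Sz\'ekelyhidi form of Donaldson's lower bound (\ref{eq:Cal-bound}), valid for \emph{every} test configuration, and then shows $\inf_{\cS(\xi,\bar J)}\Cal=0$ by degenerating $x$ to a polystable $x'$, using Proposition~\ref{prop:finiteGIT} to put a cscS metric on the central fiber, and transporting its potential to the nearby fibers $M_z$, $z\neq 0$ (all isomorphic to $(M,\eta,\xi,\Phi)$) via a $T$-equivariant Ehresmann trivialization, producing metrics in $\cS(\xi,\bar J)$ with $\Cal(\tilde g_z)\to 0$. That minimizing sequence is the step your proposal is missing.

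The justification you offer in the degenerate case is also incorrect as stated: the transversal Futaki invariant depends on the transverse complex structure, not only on the polarization, and fixing the transverse K\"ahler class does not make it a deformation invariant across the slice (if it were, polystability would play essentially no role in Theorem~\ref{thm:csc-def}). What is true --- and is proved inside Proposition~\ref{prop:finiteGIT} --- is the moment-map identity $\mu(x)\in\mathfrak{k}_x^{\perp}$: for $\zeta$ stabilizing $x$ one has
\[
\frac{d}{dt}\langle\mu(tx),\zeta\rangle=\Omega_{tx}(\sigma_{tx}\zeta,\,x)=0,
\qquad\text{hence}\qquad
\langle\mu(x),\zeta\rangle=\langle\mu(0),\zeta\rangle=0 .
\]
Since $v_\infty^{\zeta}$ is $\zeta$-fixed, this does yield the vanishing you want, but the mechanism is $G$-equivariance of $\mu$ together with $\mu(0)=0$ along the $\zeta$-invariant radial path, not invariance of the Futaki character under changes of complex structure. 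Finally, note the paper's sign convention is $\Fut(Y_0,\xi,\upsilon)\le 0$ for semistability, opposite to the one you use.
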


In the final section we give some examples of toric cscS manifolds admitting polystable deformations and thus cscS deformations,
and also non-polystable deformations.  We also show that ``real'' deformations of a 3-Sasakian manifold are polystable.
This shows that any 3-Sasakian manifold with non-trivial deformations of the transversal complex structure admits a non-trivial
Sasaki-Einstein deformation.

\subsection{Acknowledgments}\label{subsec:acknow}

This work has been presented for the first time at the 18th International Symposium on Complex Geometry at Sugadaira.
The first author would like to thank the organizers for their invitation and hospitality.
The authors want to thank Professors Charles Boyer, Akito Futaki, Tam\'as K\'alm\'an, Toshiki Mabuchi
and Christina Toennesen-Friedman for their interest in this work.  We would also like to thank the reviewer for helpful
comments which led to a revision of Proposition~\ref{prop:map}.

\section{Background on Sasakian manifolds, canonical metrics and K-stability}\label{sec:background}

\subsection{Sasakian structures}\label{subsec:def}
First we recall the definition of a Sasakian manifold and basics of Sasakian geometry. We refer
the reader to the monograph~\cite{BoyGal08} for more details.

\begin{defn}\label{def:Sasak}
A Riemannian manifold $(M,g)$ is a Sasakian manifold if the metric cone
$(C(M),\ol{g})=(\R_{>0} \times M, dr^2 +r^2 g)$ is K\"{a}hler with respect to some complex structure $I$, where $r$ is the
usual coordinate on $\R_{>0}$.
\end{defn}

Thus the dimension $n$ of $M$ is odd and denoted $n=2m+1$, while $C(M)$ is a complex manifold with $\dim_{\C} C(M) =m+1$.

In the following we will characterize Sasakian manifolds as a special type of metric contact
structure. We will identify $M$ with the submanifold $\{1\}\times M\subset C(M)$.
Let $r\del_r$ be the Euler vector field on $C(M)$,
we define a vector field tangent to $M$ by $\xi =Ir\del_r$.
Then $r\del_r$ is real holomorphic, $\xi$ is Killing with respect to both $g$ and
$\ol{g}$, and furthermore the orbits of $\xi$ are geodesics on $(M,g)$.
Define $\eta =\frac{1}{r^2}\xi\contr\ol{g}$, then we have
\begin{equation}
\eta =-\frac{I^* dr}{r} =d^c \log r,
\end{equation}
where $d^c =\sqrt{-1}(\ol{\del} -\del)$.  If $\omega$ is the K\"{a}hler form of $\ol{g}$, i.e.
$\omega(X,Y) =\ol{g}(IX,Y)$, then $\mathcal{L}_{r\del_r} \omega =2\omega$ which implies that
\begin{equation}
\label{eq:Kaehler-pot1}
\omega =\frac{1}{2}d(r\del_r \contr\omega) =\frac{1}{2}d(r^2 \eta)=\frac{1}{4}dd^c(r^2).
\end{equation}
From (\ref{eq:Kaehler-pot1}) we have
\begin{equation}\label{eq:Kaehler-pot2}
\omega=rdr\wedge\eta +\frac{1}{2}r^2 d\eta.
\end{equation}

We will use the same notation to denote $\eta$ and $\xi$ restricted to $M$.  Then (\ref{eq:Kaehler-pot2}) implies that
$\eta$ is a contact form with Reeb vector field $\xi$, since $\eta(\xi)=1$ and $\mathcal{L}_{\xi} \eta =0$.  Then
$(M, \eta, \xi)$ is a contact manifold.

Let $D\subset TM$ be the contact distribution which is defined by
\begin{equation}
D_x =\ker\eta_x
\end{equation}
for $x\in M$. Furthermore, if we restrict the almost complex structure to $D$, $J:=I|_D$, then $(D,J)$ is a strictly pseudoconvex CR structure on $M$.  There is a splitting of the tangent bundle $TM$
\begin{equation}\label{eq:split}
TM =D\oplus L_{\xi},
\end{equation}
where $L_{\xi}$ is the trivial subbundle generated by $\xi$.  Define a tensor $\Phi\in\End(TM)$ by
$\Phi|_D =J$ and $\Phi(\xi) =0$.  Then we denote the Sasakian structure by $(g,\eta,\xi,\Phi)$.
Definition~\ref{def:Sasak} is the simplest definition of a Sasakian structure, but a Sasakian structure is also frequently
defined as a metric contact structure satisfying an additional \emph{normality} condition.
We give some details that are needed in this article but for more details see~\cite{BoyGal08}.

Assume for now that we merely have a contact manifold $(M, \eta, \xi)$, with Reeb vector field $\xi$.
\begin{defn}
A $(1, 1)$-tensor field $\Phi: TM\rightarrow TM$ on a contact manifold $(M,\eta,\xi)$
is called an \emph{almost contact-complex structure} if
\[ \Phi \xi=0,\ \Phi^2=-Id+\xi\otimes \eta. \]
An almost contact-complex structure is called \emph{K-contact} if in addition, $\cL_{\xi} \Phi=0$.
\end{defn}

An almost contact-complex structure $\Phi$ is compatible with a metric $g$ if
\[ g(\Phi X,\Phi Y) =g(X,Y) -\eta(X)\eta(Y), \quad\text{for }X,Y\in\Gamma(TM).\]
If $g$ is a compatible metric and
\[ g(\Phi X,Y) =\frac{1}{2}d\eta(X,Y), \quad\text{for }X,Y\in\Gamma(TM),\]
then $(g,\eta,\xi,\Phi)$ is a called a \emph{metric contact structure}.
If in addition the Reeb vector field $\xi$ is Killing, then it is called a \emph{K-contact metric structure}.

\begin{defn}\label{defn:con-comp-st}
An almost contact-complex structure $\Phi$ on a contact manifold $(M,\eta,\xi)$ is compatible with $\eta$ if
\[ d\eta(\Phi X, \Phi Y)=d\eta(X, Y),\text{ and }d\eta(X, \Phi X)>0 \text{ for } X\in\ker\eta,\ X\ne 0.\]
\end{defn}

If $\Phi$ is compatible with $\eta$, then one defines a Riemannian metric by
\[ g_{\Phi}(X, Y)=\frac{1}{2}d\eta(X, \Phi Y)+\eta(X)\eta(Y), \]
and $(\eta, \xi, \Phi, g_{\Phi})$ is a contact metric structure on $M$.  This metric structure is K-contact
if  $\cL_{\xi} \Phi=0$, that is $\Phi$ is K-contact.  Henceforth, we will only consider compatible almost contact-complex
structures.

If $(\eta, \xi, \Phi, g_{\Phi})$ is K-contact, because
\[ d\eta(X,Y) =2g_{\Phi}(\Phi X,Y),\]
we have $\Phi =\nabla\xi$, where $\nabla$ is the Levi-Civita connection of $g_{\Phi}$.

If $(\eta,\xi,\Phi,g)$ is a contact metric structure, then we define $\Phi$ to be \emph{normal} if
\begin{equation}\label{eq:Phi-normal}
N_{\Phi}(X,Y) =d\eta(X,Y)\otimes\xi\quad\text{for all }X,Y\in\Gamma(TM),
\end{equation}
where $N_\Phi$ is the Nijenhuis tensor
\[ N_{\Phi} (X,Y) :=-\Phi^2 [X,Y] + \Phi([\Phi X,Y] +[X,\Phi Y]) -[\Phi X,\Phi Y],\text{ for all }X,Y\in\Gamma(TM).\]
A Sasakian structure $(\eta,\xi,\Phi,g)$ can be defined as a normal contact metric structure.
Note that (\ref{eq:Phi-normal}) implies that $\cL_\xi \Phi =0$, so it is K-contact.

One can alternatively define a Sasakian structure to be a K-contact metric structure $(\eta, \xi, \Phi, g)$ for which the
CR structure $(D,J)$, where $D_x :=\ker\eta_x$ and $J:=\Phi|_D$, is integrable, that is
\begin{equation}\label{eq:CR-int}
N_{(D,J)} \equiv 0,
\end{equation}
It turns out (see \cite{BoyGal08}) that (\ref{eq:Phi-normal}) is equivalent
to $\cL_\xi \Phi =0$ and (\ref{eq:CR-int}) for a metric contact structure $(\eta, \xi, \Phi, g)$.

We define an almost K-contact-complex structure $\Phi$ to be \emph{integrable} if (\ref{eq:CR-int}) holds.
If $\Phi$ is a K-contact almost contact-complex structure compatible with $\eta$, then $(\eta, \xi, \Phi, g_{\Phi})$ is Sasakian
if and only if $\Phi$ is integrable.

One can also define a Sasakian structure, compatible with a Riemannian metric g, to be a unit length Killing field
$\xi$ such that the tensor $\Phi X =\nabla_X \xi$ satisfies the condition
\[ \nabla_X \Phi(Y)=g(\xi, Y)X-g(X, Y)\xi. \]
See~\cite{BoyGal08} for details.

We will frequently denote a Sasakian structure by $(g,\eta,\xi,\Phi)$ even though specifying
$(g,\xi),\ (g,\eta),$ or $(\eta, \Phi)$ is enough to determine the Sasakian structure.

\subsection{Transverse K\"{a}hler structure}\label{subsec:trans-Kah}

The \emph{Reeb foliation} $\mathscr{F}_\xi$ on $M$ generated by the action of $\xi$ will be important in the sequel.
Note that it has geodesic leaves but in general the leaves are not compact.  If the leaves are compact, or equivalently
$\xi$ generates an $S^1$-action, then $(g,\eta,\xi,\Phi)$ is said to be a \emph{quasi-regular} Sasakian structure, otherwise it is
irregular.  If this $S^1$ action is free, then $(g,\eta,\xi,\Phi)$ is said to be \emph{regular}.  In this last case
$M$ is an $S^1$-bundle over a manifold $X$, which we will see below is K\"{a}hler.  If the structure is merely quasi-regular, then
the leaf space has the structure of a K\"{a}hler orbifold.
In general, in the irregular case, the leaf space is not even Hausdorff but we will make use of the transversally K\"{a}hler
property of the foliation $\mathscr{F}_\xi$ which we discuss next.

The vector field $\xi -\sqrt{-1}I\xi =\xi +\sqrt{-1}r\del_r$ is holomorphic on $C(M)$.  If we denote by $\tilde{\C}^*$ the
universal cover of $\C^*$, then $\xi +\sqrt{-1}r\del_r$ induces a holomorphic action
of $\tilde{\C}^*$ on $C(M)$.  The orbits of $\tilde{\C}^*$ intersect $M\subset C(M)$ in the orbits of the Reeb foliation
generated by $\xi$.  We denote the Reeb foliation by $\mathscr{F}_\xi$.  This gives $\mathscr{F}_\xi$ a transversely holomorphic
structure.

The foliation $\mathscr{F}_{\xi}$ together with its transverse holomorphic structure is given by an open covering
$\{U_\alpha \}_{\alpha\in A}$ and submersions
\begin{equation}\label{eq:fol-chart}
\Pi_\alpha :U_\alpha \rightarrow W_\alpha \subset\C^{m}
\end{equation}
such that
when $U_\alpha \cap U_\beta \neq\emptyset$ the map
\[\Phi_{\beta\alpha} =\Pi_\beta \circ\Pi_\alpha^{-1} :\Pi_{\alpha}(U_\alpha \cap U_\beta) \rightarrow\Pi_{\beta}(U_\alpha \cap U_\beta) \]
is a biholomorphism.

Note that on $U_\alpha$ the differential $d\Pi_\alpha :D_x \rightarrow T_{\Pi_\alpha(x)}W_\alpha$ at $x\in U_\alpha$ is
an isomorphism taking the almost complex structure $J_x$ to that on $T_{\Pi_\alpha(x)}W_\alpha$.
Since $\xi\contr d\eta =0$ the 2-form $\frac{1}{2}d\eta$ descends to a form $\omega_\alpha^T$ on $W_\alpha$.  Similarly,
$g^T =\frac{1}{2}d\eta(\cdot,\Phi\cdot)$ satisfies $\mathcal{L}_\xi g^T =0$ and vanishes on vectors tangent to the leaves, so
it descends to an Hermitian metric $g^T_\alpha$ on $W_\alpha$ with K\"{a}hler form $\omega_\alpha^T$.  The K\"{a}hler metrics
$\{g_\alpha ^T \}$ and K\"{a}hler forms $\{\omega_\alpha^T \}$ on $\{ W_\alpha\}$ by construction are isomorphic on the overlaps
\[ \Phi_{\beta\alpha} : \Pi_{\alpha}(U_\alpha \cap U_\beta) \rightarrow\Pi_{\beta}(U_\alpha \cap U_\beta).\]
We will use $g^T$, respectively $\omega^T$, to denote both the K\"{a}hler metric, respectively K\"{a}hler form, on the
local charts and the globally defined pull-back on $M$.

If we define $\nu(\mathscr{F}_\xi) =TM/{L_\xi}$ to be the normal bundle to the leaves, then we can generalize the above concept.

\begin{defn}
A tensor $\Psi\in\Gamma\bigl((\nu(\mathscr{F}_\xi)^*)^{\otimes p} \bigotimes\nu(\mathscr{F}_\xi)^{\otimes q}\bigr)$ is \emph{basic}
if $\mathcal{L}_V \Psi =0$ for any vector field $V\in\Gamma(L_\xi)$.
\end{defn}
  Note that it is sufficient to check the above property for $V=\xi$.
Then $g^T$ and $\omega^T$ are such tensors on $\nu(\mathscr{F}_\xi)$.  We will also make use of the bundle isomorphism
$\Pi:D \rightarrow\nu(\mathscr{F}_\xi)$, which induces an almost complex structure $\ol{J}$ on $\nu(\mathscr{F}_\xi)$ so that
$(D,J)\cong(\nu(\mathscr{F}_\xi),\ol{J})$ as complex vector bundles.  Clearly, $\ol{J}$ is basic and is mapped to the
natural almost complex structure on $W_\alpha$ by the local chart $d\Pi_\alpha :D_x \rightarrow T_{\Pi_\alpha(x)}W_\alpha$.

To work on the K\"{a}hler leaf space we define the Levi-Civita connection of $g^T$ by
\begin{equation}
\nabla^T_X Y =\begin{cases}
\Pi_\xi(\nabla_X Y) & \text{ if }X, Y\text{ are smooth sections of }D, \\
\Pi_\xi([V,Y]) & \text{ if } X=V\text{ is a smooth section of }L_\xi,
\end{cases}
\end{equation}
where $\Pi_\xi :TM \rightarrow D$ is the orthogonal projection onto $D$.  Then $\nabla^T$ is the unique torsion free connection
on $D\cong\nu(\mathscr{F}_\xi)$ so that $\nabla^T g^T=0$.  Then for $X,Y\in\Gamma(TM)$ and $Z\in\Gamma(D)$ we have the
curvature of the transverse K\"{a}hler structure
\begin{equation}
R^T(X,Y)Z =\nabla^T_X \nabla^T_Y Z -\nabla^T_Y \nabla^T_X Z -\nabla^T_{[X,Y]} Z,
\end{equation}
and similarly we have the transverse Ricci curvature $\Ric^T$ and scalar curvature $s^T$.  We will denote the
transverse Ricci form by $\rho^T$.
From O'Neill's tensors computation for Riemannian submersions \cite{ONe66} and elementary properties of Sasakian structures
we have the following.
\begin{prop}\label{prop:Sasaki-Ric}
Let $(M,g,\eta,\xi,\Phi)$ be a K-contact manifold of dimension $n=2m+1$, then
\begin{thmlist}
\item  $\Ric_g (X,\xi) =2m\eta(X),\quad\text{for }X\in\Gamma(TM)$,\label{eq:submer-Ric-Reeb}
\item  $\Ric^T (X,Y) =\Ric_g (X,Y) +2g^T(X,Y),\quad\text{for }X,Y\in\Gamma(D),$
\item  $s^T =s +2m.$\label{eq:submer-scal}
\end{thmlist}
\end{prop}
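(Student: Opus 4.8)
The plan is to realize the leaf‑space charts $\Pi_\alpha : U_\alpha \to W_\alpha$ of Section~\ref{subsec:trans-Kah} as Riemannian submersions and to feed O'Neill's curvature identities into them. Here the horizontal distribution is $D=\ker\eta$ and the vertical distribution is $L_\xi$; by construction $g^T$ agrees with $g$ on $D$ and $d\Pi_\alpha$ is a fibrewise isometry, so $(U_\alpha,g)\to(W_\alpha,g^T_\alpha)$ is a Riemannian submersion whose fibres are the leaves of $\mathscr{F}_\xi$. Since these leaves are one–dimensional geodesics and $\xi$ is a unit Killing field, the fibres are totally geodesic, so O'Neill's second fundamental form tensor $T$ vanishes identically. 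The remaining integrability tensor $A$ I would compute directly: for $X,Y\in\Gamma(D)$ one has $A_X Y=\eta(\nabla_X Y)\xi=-g(\Phi X,Y)\xi=-\tfrac12 d\eta(X,Y)\xi$ (using $\nabla_X\xi=\Phi X$ and $\eta(Y)=0$), while $A_X\xi=\Pi_\xi(\nabla_X\xi)=\Phi X$.

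For (i) I would argue that $\xi$ is an eigendirection of $\Ric_g$ with eigenvalue $2m$. From $\nabla_X\xi=\Phi X$ and the Killing identity $(\nabla_Y\Phi)Z=R(Y,\xi)Z$ one gets $R(e_i,\xi)\xi=(\nabla_{e_i}\Phi)\xi=-\Phi^2 e_i$, whence $\Ric_g(\xi,\xi)=\sum_i g(R(e_i,\xi)\xi,e_i)=\sum_i|\Phi e_i|^2=\sum_i(1-\eta(e_i)^2)=2m$ over an orthonormal frame. The vanishing $\Ric_g(X,\xi)=0$ for $X\in\Gamma(D)$ is the delicate point: it is the mixed horizontal–vertical term of O'Neill, which for totally geodesic fibres is a divergence of $A$; since $A$ is built from the transverse K\"ahler form $\tfrac12 d\eta$, which is parallel for $\nabla^T$, this divergence stays vertical and contributes nothing to $\Ric_g(X,\xi)$. (Equivalently this is the standard K‑contact structure equation $Q\xi=2m\,\xi$.)

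For (ii) I would quote O'Neill's formula for the Ricci curvature of horizontal vectors on a submersion with totally geodesic fibres, $\Ric_g(X,Y)=\check\Ric(X,Y)-2\sum_a g(A_X e_a,A_Y e_a)$, where $\{e_a\}$ is an orthonormal frame of $D$ and $\check\Ric$ is the base Ricci pulled back, which under the isometry $d\Pi_\alpha$ is exactly $\Ric^T(X,Y)$. Using $A_X e_a=-g(\Phi X,e_a)\xi$ gives $\sum_a g(A_X e_a,A_Y e_a)=g(\Phi X,\Phi Y)=g(X,Y)=g^T(X,Y)$ for $X,Y\in\Gamma(D)$, and rearranging yields $\Ric^T(X,Y)=\Ric_g(X,Y)+2g^T(X,Y)$.

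Finally (iii) follows by tracing (ii) over an orthonormal frame $\{e_a\}$ of $D$: $s^T=\sum_a\Ric^T(e_a,e_a)=\sum_a\Ric_g(e_a,e_a)+2\sum_a g^T(e_a,e_a)$. Since $g^T=g$ on $D$ the last sum equals $2m$, and $\sum_a\Ric_g(e_a,e_a)=s-\Ric_g(\xi,\xi)=s-2m$ by (i), so $s^T=(s-2m)+4m=s+2m$. The main obstacle throughout is bookkeeping with O'Neill's sign and coefficient conventions and, above all, justifying the vanishing of the mixed Ricci term in (i); I would sanity‑check the final constants against the Hopf fibration $S^{2m+1}\to\cps^m$, where $s=2m(2m+1)$ and $s^T=4m(m+1)$ indeed satisfy $s^T=s+2m$.
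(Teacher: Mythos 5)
Your proof follows exactly the route the paper intends: the paper offers no argument beyond citing O'Neill's submersion formulas, and your realization of the foliation charts as Riemannian submersions with totally geodesic one-dimensional fibres, the computation $A_XY=-\tfrac12 d\eta(X,Y)\xi$, and the trace in (iii) (including the Hopf-fibration check) are all correct. The one flaw is your justification of the vanishing of the mixed term $\Ric_g(X,\xi)$ for $X\in\Gamma(D)$: since the proposition is stated for K-contact (not Sasakian) manifolds, the transverse structure is only almost-K\"ahler, so $\omega^T=\tfrac12 d\eta$ is \emph{not} $\nabla^T$-parallel in general. What saves the argument is the weaker fact that $\omega^T$ is coclosed (because $*_T\omega^T$ is proportional to $(\omega^T)^{m-1}$, which is closed), equivalently $\sum_a(\nabla^T_{e_a}\Phi)e_a=0$; combined with $(\nabla_{e_a}\Phi)e_a=(\nabla^T_{e_a}\Phi)e_a-\xi$ and the Killing identity $R(X,Y)\xi=(\nabla_X\Phi)Y-(\nabla_Y\Phi)X$ this gives $\sum_i(\nabla_{e_i}\Phi)e_i=-2m\xi$ and hence $\Ric_g(X,\xi)=2m\eta(X)$. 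Your parenthetical appeal to the standard K-contact identity $Q\xi=2m\xi$ is a legitimate alternative, but the "parallel" claim as written should be replaced by the coclosedness argument.
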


\begin{defn}
A constant scalar curvature Sasakian (cscS) manifold $(M,g,\eta,\xi,\Phi)$ is a Sasakian manifold with
$s^T$ constant, or equivalently $s_g$ constant.
\end{defn}

It will be convenient at times to consider the larger class of K-contact structures $(g,\eta,\xi,\Phi)$
compatible with $(M,\eta,\xi)$.  In this case moment map of~\cite{Don97,He11} is the scalar curvature of
the \emph{Chern connection} $\nabla^c$ on $(\nu(\mathscr{F}_\xi),\ol{J})$
\[ \nabla^c_X Y =\nabla^T_X Y -\frac{1}{2}\ol{J}\nabla^T_X \ol{J}(Y).  \]
So in considering K-contact structures we will consider a different $s_c^T$ than in Proposition~\ref{prop:Sasaki-Ric}.

Let $\mathcal{S}(\xi)$ be the space of Sasakian structures $(\tilde{g},\tilde{\eta},\tilde{\xi},\tilde{\Phi})$ on $M$ with
$\tilde{\xi}=\xi$.  For any $(\tilde{g},\tilde{\eta},\tilde{\xi},\tilde{\Phi})\in\mathcal{S}(\xi)$ the 1-form
$\beta=\tilde{\eta}-\eta$ is basic, so $[d\tilde{\eta}]_b=[d\eta]_b$, where $[\,\cdot\,]_b$ denotes the basic cohomology
class of a basic closed form.  Thus $[\omega^T]_b \in H^2_b(M/\mathscr{F}_\xi,\R)$ (see~\cite{BoyGal08} for more on basic cohomology) is the same for every Sasakian structure in $\mathcal{S}(\xi)$.
Thus, as first observed in~\cite{BoyGalSim08}, fixing the Reeb vector field is the closest analogue to a polarization in K\"{a}hler geometry, and we say that the Reeb vector field $\xi$ \emph{polarizes} the Sasakian manifold.

We will consider the space of Sasakian structures $\cS(\xi,\ol{J})$ with fixed Reeb vector field and fixed transversal complex structure $\ol{J}$.  We define
\begin{equation}\label{eq:trans-def}
\cH_{\Phi}=\{\phi\in C^\infty_b(M):\ \eta_\phi\wedge (d\eta_\phi)^n\neq 0 \}
\end{equation}
where for any $\phi\in \cH$, we define a new Sasakian structure $(\eta_\phi, \xi, \Phi_\phi, g_\phi)$ with the same Reeb vector field $\xi$ such that
\begin{equation}\label{eq:trans-def2}
\eta_\phi =\eta+d^c_b\phi,\ \Phi_\phi=\Phi-\xi\otimes d^c_b\phi \circ \Phi,
\end{equation}
the transversal K\"{a}hler form is $\omega^T_{\phi} =\frac{1}{2}d\eta_\phi =\frac{1}{2}d\eta+\frac{1}{2}d_b d^c_b \phi$, and $g_\phi$ is as in Definition~\ref{defn:con-comp-st}.

Note that $D$ and $\Phi_\phi$ vary but $(\eta_\phi, \xi, \Phi_\phi, g_\phi)$ has the same transverse holomorphic structure
and same complex structure on $C(M)$ as $(\eta, \xi, \Phi, g)$ (Prop. 4.1 in \cite{FutOnoWan09}).
On the other hand,  if $(\tilde \eta, \xi, \tilde \Phi, \tilde g)\in\cS(\xi,\ol{J})$ is another Sasakian structure  with the same Reeb vector field $\xi$ and the same transverse complex structure, then there exists unique functions
$\phi\in \cH,\ \psi\in C^\infty_b(M)$ up to addition of
a constant and $\alpha\in H^1_b$ a harmonic 1-form such that
\begin{equation}\label{eq:cont-decom}
\tilde\eta =\eta +\alpha + d^c_b \phi +d_b\psi,
\end{equation}
See~\cite[Lemma 3.1]{BoyGalSim08}.
Note that $\psi$ is given by a gauge transformation $\exp(\psi\xi)$ of $M$.  Since $\alpha$ and $d_b\psi$ do not effect the transversal K\"{a}hler structure they will not be important.  Thus $\cS(\xi, \bar J)$ can be viewed as the analogue of the set of K\"ahler metrics
in a fixed K\"ahler class.

Boyer-Galicki-Simanca \cite{BoyGalSim08} proposed to seek the extremal Sasakian metrics  to represent $\cS(\xi, \bar J)$, by extending Calabi's extremal problem to Sasakian geometry.  We denote by $\mathfrak{M}(\xi,\ol{J})$ the metrics associated with Sasakian structures in $\mathcal{S}(\xi,\ol{J})$.  We define the Calabi functional by
\begin{equation}\label{eq:Calabi-Sasak}
\begin{array}{rcl}
\mathfrak{M}(\xi,\ol{J}) & \overset{\Cal}{\longrightarrow} & \R \\
g & \mapsto & \int_M (s-s_0)^2 \, d\mu_g,
\end{array}
\end{equation}

where $s_0$, the average of $s$ is independent of the structure in $\cS(\xi, \bar J)$.
By Proposition~\ref{prop:Sasaki-Ric}.\ref{eq:submer-scal} $s_0 =s_0^T -2m$ where
\[\begin{split}
s^T_0 & =\frac{\int_M s^T\, d\mu}{\int_M d\mu} \\
      & =\frac{\int_M 4m\pi c_1(\mathscr{F}_\xi)\wedge\eta \wedge\bigl(\omega^T \bigr)^{m-1}}{\int_M \eta\wedge\bigl(\omega^T \bigr)^{m}}. \\
\end{split}\]

A \emph{Sasaki-extremal} metric $g\in\mathfrak{M}(\xi,\ol{J})$ is a critical point of $\Cal$.  As in the K\"{a}hler
case, the Euler-Lagrange equations of (\ref{eq:Calabi-Sasak}) show that this is equivalent to
the basic vector field $\del_g^{\#} s_g :=(\delb s_g)^{\#}$ being transversely holomorphic.
Thus constant scalar curvature Sasakian metrics are examples, and furthermore a Sasaki-extremal metric is of constant
scalar curvature precisely when the transversal Futaki invariant is zero (cf.~\cite{BoyGalSim08}).  The results of this
article can be extended to Sasaki-extremal metrics using relative K-stability.

\subsection{Moment map interpretation}\label{sec:mmap}

We will be interested in finding constant scalar curvature Sasakian metrics.  Although, much of what follows can be applied
more generally to Sasaki-extremal metrics by considering that as a relative version of the cscS case.
In~\cite{He11} W. He gave an interpretation of this problem in term of a moment map, as Donaldson did for the cscK case~\cite{Don97}.

Let $\cG$ be the group of \emph{strict contactomorphisms}.  This is the group of diffeomorphisms $f: M\rightarrow M$ which satisfy $f^{*}\eta=\eta$.  It has Lie algebra
\[ \Lie(\cG)=\{X\in \Gamma(TM),\ \cL_X\eta=0\},\]
the space of {\it strict contact} vector fields.
The space of basic functions $C_b^\infty(M)$ is isomorphic to $\Lie(\cG)$.
For any $X\in\Lie(\cG)$ define $H_X =\eta(X)$, and conversely
for each basic function $H\in C_b^\infty(M)$ , there exists a unique strict contact vector field $X=X_H \in\Gamma(TM)$ which satisfies
\[ H=\eta(X),\ X\contr d\eta=-dH. \]
The Poisson bracket is then defined by
\[ \{F, H\}=\eta([X_F, X_H]),\]
and $H\mapsto X_H$ is a Lie algebra isomorphism.

We will use the natural $\cG$-invariant $L^2$ inner product on $C_b^\infty(M)$
\begin{equation}\label{eq:L2}
\langle f, h \rangle=\int_M f h\, d\mu,
\end{equation}
where $d\mu=(2^m m!)^{-1}\eta\wedge (d\eta)^m$ is a volume form determined by $\eta$.

The group of strict contactomorphisms $\cG$ acts on the space $\cK$ of K-contact structures which are compatible
with $\eta$ via
\[ (f, \Phi)\rightarrow f_{*}\Phi f^{-1}_{*}.\]

Moreover, $\cK$ can be endowed with a K\"ahler structure~\cite{He11} for which it is an infinite dimensional symmetric space and
for which $\cG$ acts by biholomorphisms and isometries.  First note that
\[ T_{\Phi}\cK=\lbrace  A\in \End(TM)\ :\ A\xi=0,\ \cL_{\xi}A=0,\ A\Phi +\Phi A=0,\ d\eta(AX,Y)+d\eta (X,AY)=0,\forall X,Y\in\Gamma(TM)\  \rbrace.\]
An almost-complex structure $\cJ$ is defined on $\cK$ by
\[ \cJ A= \Phi A. \]
To each $\Phi \in \cK$, with $\eta$ we can associate a Sasakian metric $g_{\Phi}$ which induces a metric on tensors. We
define on $\cK$ a \emph{weak} Riemannian metric
\[ \begin{split}
g_\cK( A, B) & = \int_M \langle A, B\rangle_{g_{ \Phi}}\, d\mu_{\eta} \\
                         & =\int_M \tr (AB)\, d\mu_{\eta}
\end{split}\]
which is Hermitian with respect to $\cJ$

Let $\Phi_0$ be a fixed K-contact structure.  Let $\End_{S,\Phi_0}(D)$ be the basic endomorphisms of $D=\ker\eta$ symmetric
with respect to $g_0^T$ and anti-commuting with $\Phi_0$.  For convenience we identify an endomorphism of $D$ with an
endomorphism of $TM$ by acting by zero on the second factor of (\ref{eq:split}).
Define
\begin{equation}\label{eq:chart-dom}
K_{\Phi_0} :=\{ Q\in\End_{S,\Phi_0}(D)\ :\  Id -Q^2 >0\; \}.
\end{equation}
We have a chart
\begin{equation}\label{eq:chart}
\begin{gathered}
\Psi_{\Phi_0} :K_{\Phi_0}\rightarrow\cK \\
Q \mapsto \Phi_0 (Id +Q)(Id -Q)^{-1}
\end{gathered}
\end{equation}
One can show that $\Psi_{\Phi_0}$ is a bijection.  Furthermore, one can easily compute the differential $d\Psi_{\Phi_0}$
of $\Psi_{\Phi_0}$ at $Q$
\begin{gather*}
d\Psi_{\Phi_0} : \End_{S,\Phi_0}(D)\rightarrow T_{\Phi}\cK \\
A \mapsto 2\Phi_0 (Id -Q)^{-1} A(Id -Q)^{-1},
\end{gather*}
and check that $d\Psi_{\Phi_0} \circ J =\cJ\circ d\Psi_{\Phi_0}$, where $J$ is the complex structure $A\mapsto\Phi_0\circ A$ on
$K_{\Phi_0}$.  Therefore the maps (\ref{eq:chart}) are holomorphic charts.  Also, arguments as in the symplectic case
show that the 2-form
\[ \Omega_{\cK}(A,B) =\int_M \tr (\Phi AB)\, d\mu_{\eta},\]
is closed.  See~\cite{FujSch88} and~\cite{Smo07} for more details.

Define $K^s_{\Phi_0},\ s>n+1,$ as in
(\ref{eq:chart-dom}) but with sections in Sobolev space $L^{2,s} (\End_{S,\Phi_0}(D))$, and consider the charts (\ref{eq:chart})
on $K^s_{\Phi_0}$.  The above arguments show that the space of $L^{2,s}$ K-contact structures $\cK^s$ is a smooth complex Hilbert
manifold.  And $\cK$ has the structure of a smooth complex ILH-manifold.

An almost contact-complex structure $\Phi_0$ can also be identified with a splitting
\[ D\otimes\C \cong\nu(\mathscr{F}_\xi)\otimes\C =T^{1,0}(\Phi_0)\oplus T^{0,1}(\Phi_0),\]
into $\sqrt{-1}$ and $-\sqrt{-1}$ eigenspaces of $\Phi_0$.

Suppose $\Phi_0$ is a K-contact complex structure.  If $\Phi$ is another K-contact complex structure then
$\Phi =\Phi_0 (Id+Q)(Id-Q)^{-1}$, with $Q\in K_{\Phi_0}$.  If we extend $Q$ to
\[Q:\nu(\mathscr{F}_\xi)\otimes\C \rightarrow \nu(\mathscr{F}_\xi)\otimes\C,\]
then
\[ Q =\begin{pmatrix} 0 & \ol{P} \\ P & 0 \end{pmatrix},\]
where $P: T^{1,0}(\Phi_0)\rightarrow T^{0,1}(\Phi_0)$.

This gives a useful complex parametrization of $\cK$.
\begin{prop}\label{prop:com-chart}
Given a K-contact complex structure $\Phi_0$, the manifold $\cK$ is parameterized by operators
$P:T^{1,0}(\Phi_0)\rightarrow T^{0,1}(\Phi_0)$ satisfying the following:
\begin{thmlist}
\item After lowering an index $P^\flat \in\Gamma(S^2(\Lambda^{1,0}_b))$, basic symmetric tensors, and
\item $Id-\ol{P}P>0$.
\end{thmlist}
And one has
\[ T^{1,0}(\Phi) =\im(Id -P),\quad T^{0,1}(\Phi) =\im(Id-\ol{P}),\]
where $\Phi =\Phi_0 (Id+Q)(Id-Q)^{-1},\ Q=\frac{1}{2}(P+\ol{P})$.
\end{prop}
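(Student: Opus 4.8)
The plan is to obtain the asserted parametrization by composing the real chart $\Psi_{\Phi_0}$ of~(\ref{eq:chart}), already known to be a bijection $K_{\Phi_0}\to\cK$, with a bijection $Q\leftrightarrow P$ between $K_{\Phi_0}$ and the operators $P$ satisfying conditions (i)--(ii). First I would send $Q\in K_{\Phi_0}$ to $P:=Q|_{T^{1,0}(\Phi_0)}$. Since $Q$ anticommutes with $\Phi_0$ it interchanges the $\pm\sqrt{-1}$-eigenspaces, which is exactly the observation yielding the off-diagonal block form $Q=\bigl(\begin{smallmatrix}0 & \ol P\\ P & 0\end{smallmatrix}\bigr)$ recorded before the statement; because $Q$ is a \emph{real} endomorphism of $D$ the remaining block $T^{0,1}\to T^{1,0}$ is forced to equal $\ol P$, so $Q$ is determined by $P$ alone, consistent with the formula $Q=\tfrac12(P+\ol P)$. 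Conversely any such $P$ defines a real, basic, $\Phi_0$-anticommuting $Q$ by this block matrix, with $Q$ basic precisely when $P$ is.

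Next I would translate the two size conditions. For symmetry, I note that since $g_0^T$ is $J_0$-Hermitian its complex-bilinear extension vanishes on $T^{1,0}\times T^{1,0}$ and on $T^{0,1}\times T^{0,1}$; hence lowering an index on the target of $P$ with $g_0^T$ carries $T^{0,1}$ into $\Lambda^{1,0}_b$, so that $P^\flat(x,y):=g_0^T(Px,y)$ is a section of $\Lambda^{1,0}_b\otimes\Lambda^{1,0}_b$. Expanding the symmetry relation $g_0^T(Qu,v)=g_0^T(u,Qv)$ for $u,v\in T^{1,0}(\Phi_0)$ and using the bilinear symmetry of $g_0^T$ gives $P^\flat(x,y)=P^\flat(y,x)$, i.e. (i). For positivity I compute $Q^2=\bigl(\begin{smallmatrix}\ol P P & 0\\ 0 & P\ol P\end{smallmatrix}\bigr)$, so that $Id-Q^2>0$ on $D$ is equivalent to $Id-\ol P P>0$ on $T^{1,0}(\Phi_0)$, the $T^{0,1}$-block being its conjugate; this is (ii). Composing $P\mapsto Q$ with $\Psi_{\Phi_0}$ then yields the parametrization of $\cK$.

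For the eigenspace formulas I would solve $\Phi w=\sqrt{-1}\,w$ directly. Putting $u=(Id-Q)^{-1}w$ (legitimate since $Id-Q^2>0$ makes $Id\pm Q$ invertible) converts this into $\Phi_0(Id+Q)u=\sqrt{-1}(Id-Q)u$; decomposing $u=u^{1,0}+u^{0,1}$, applying $\Phi_0=\pm\sqrt{-1}$ on $T^{\pm}$ and using that $Q$ swaps the two summands, comparison of the two components forces $u^{0,1}=0$. Then $w=(Id-Q)u^{1,0}=(Id-P)u^{1,0}$, whence $T^{1,0}(\Phi)=\im(Id-P)$; conjugating the whole computation gives $T^{0,1}(\Phi)=\im(Id-\ol P)$.

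The step I expect to be the main obstacle is the index-lowering bookkeeping: one must lower with the complex-\emph{bilinear} extension of $g_0^T$ rather than the Hermitian pairing, so that $P^\flat$ lands in the symmetric $(2,0)$-space $S^2(\Lambda^{1,0}_b)$ and not in a $(1,1)$-type space, and one must check that the symmetry of $Q$ corresponds \emph{exactly} to the symmetry of $P^\flat$. Once these conventions are fixed, the reality, basicness and positivity translations, together with the eigenspace computation, are routine verifications.
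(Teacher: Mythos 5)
Your proposal is correct and follows exactly the route the paper intends: the paper gives no separate proof of this proposition, but the paragraph preceding it (the observation that a $\Phi_0$-anticommuting real $Q$ has off-diagonal block form $\bigl(\begin{smallmatrix}0&\ol P\\ P&0\end{smallmatrix}\bigr)$, composed with the bijection $\Psi_{\Phi_0}$ of (\ref{eq:chart})) is precisely your argument, and your translations of symmetry and positivity and your eigenvalue computation fill in the remaining routine details correctly. The only caveat is purely notational: with your (and the paper's displayed) block convention one has $Q=P+\ol P$ as operators on $\nu(\mathscr{F}_\xi)\otimes\C$, so the factor $\tfrac12$ in the proposition's formula $Q=\tfrac12(P+\ol P)$ reflects a normalization convention in the paper rather than anything you need to reprove.
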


The subspace $\cK^i \subseteq\cK$ of Sasakian structures is the subvariety for which (\ref{eq:CR-int}) is satisfied.
In the complex parametrization this can be written
\[ N(P) =\delb_b P +[P,P] =0.\]

The main result of \cite{He11} extends the work of~\cite{Don97} to give the following.
\begin{thm}
The map $\mu:\cK\rightarrow C_b^\infty(M)_0$ with $\mu(\Phi)=s^T(\Phi)-s^T_0$ is an \emph{equivariant} moment map for the $\cG$-action
on $\cK$, where $C_b^\infty(M)_0$ is identified with its dual under the pairing (\ref{eq:L2}).

Here $s^T$ denotes the Hermitian scalar curvature when $\Phi$ is not in $\cK^i$.
\end{thm}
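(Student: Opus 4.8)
The plan is to verify the two defining properties of a moment map: the infinitesimal identity
\[ \langle D\mu|_\Phi(A),\, H\rangle = \Omega_{\cK}\bigl(\widehat{X}_H|_\Phi,\, A\bigr) \quad\text{for all } A\in T_\Phi\cK,\ H\in C_b^\infty(M)_0, \]
where $\widehat{X}_H$ denotes the vector field on $\cK$ induced by the strict contact field $X_H\in\Lie(\cG)$, together with $\cG$-equivariance of $\mu$. First I would identify the infinitesimal action: since $\cG$ acts by $(f,\Phi)\mapsto f_*\Phi f_*^{-1}$, differentiating the flow of $X_H$ gives $\widehat{X}_H|_\Phi = \cL_{X_H}\Phi$ (up to sign). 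Because $X_H$ preserves $\eta$ and commutes with $\xi$, one checks that $\cL_{X_H}\Phi$ satisfies the four linear conditions cutting out $T_\Phi\cK$ (it kills $\xi$, is basic, anticommutes with $\Phi$, and is $d\eta$-skew), so the action is indeed tangent to $\cK$.

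The heart of the matter is the linearization $D\mu|_\Phi(A) = Ds^T|_\Phi(A)$ of the transverse Hermitian scalar curvature. Here I would reduce everything to the transverse almost-K\"ahler picture: in a foliation chart $\Pi_\alpha: U_\alpha\to W_\alpha$ the data $(\Phi, A, \widehat{X}_H)$ are all basic and descend to an $\omega_\alpha^T$-compatible almost complex structure on $W_\alpha$, its variation, and the symplectic vector field generated by $H$, while $s^T$ descends to the ordinary Hermitian scalar curvature of that almost-K\"ahler structure. In this chart the required identity is exactly the Fujiki--Donaldson moment-map computation for the Hermitian scalar curvature on the space of compatible almost complex structures on a symplectic manifold. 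I would therefore transcribe that computation transversely, carrying out the variation of the Chern connection $\nabla^c$ on $(\nu(\mathscr{F}_\xi),\ol{J})$ and of its scalar curvature, and then integrate by parts over $M$ against the $\cG$-invariant measure $d\mu = (2^m m!)^{-1}\eta\wedge(d\eta)^m$.

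The matching of the two sides then exploits that integration over $M$ factors, along the Reeb direction, through the transverse volume $\eta\wedge(\omega^T)^m$, so that all Reeb contributions vanish and the global $L^2$-pairing reproduces the fibrewise pairing in the chart. The \textbf{main obstacle} I anticipate is precisely the bookkeeping of this reduction: one must verify that every tensor entering the Donaldson calculation --- the variation of curvature, the Lichnerowicz-type operator dual to $H\mapsto\cL_{X_H}\Phi$, and the integrand $\tr(\Phi\,\widehat{X}_H\,A)$ --- is basic and has no component along $L_\xi$, so that the computation is genuinely the transverse one and no extra foliation terms survive. Once this is established, the integrable case recovers the transverse Riemannian scalar curvature of Proposition~\ref{prop:Sasaki-Ric}, whereas for non-integrable $\Phi$ the statement is naturally phrased with the Hermitian scalar curvature $s^T$ of $\nabla^c$, as flagged in the text.

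Finally, equivariance is the easy part: $s^T$ is a pointwise Riemannian invariant of $g_\Phi$, and since any $f\in\cG$ preserves $\eta$ and $\xi$ it acts as a transverse isometry, giving $s^T(f_*\Phi f_*^{-1}) = s^T(\Phi)\circ f^{-1}$. Under the identification of $C_b^\infty(M)_0$ with its dual via \eqref{eq:L2}, and recalling that the adjoint action of $\cG$ on $\Lie(\cG)\cong C_b^\infty(M)$ is pushforward of Hamiltonians, this is exactly the statement that $\mu$ intertwines the $\cG$-action on $\cK$ with the coadjoint action, i.e. $\mu(f\cdot\Phi) = \Ad^*_{f}\,\mu(\Phi)$.
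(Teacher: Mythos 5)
The paper does not actually prove this theorem: it is quoted verbatim as the main result of~\cite{He11}, which extends Donaldson's computation~\cite{Don97} to the transverse setting, so there is no internal proof to compare against. Your outline is precisely the route that the cited reference takes --- identify the infinitesimal action $\cP_\Phi(H)=\cL_{X_H}\Phi$, check it is tangent to $\cK$, transversalize the Fujiki--Donaldson linearization of the Hermitian scalar curvature in foliation charts, and observe that basicness of all the tensors involved lets the global $L^2$-pairing against $\eta\wedge(d\eta)^m$ reproduce the chartwise symplectic computation; the equivariance argument via invariance of $d\mu$ and the identification $\Ad_f X_H = X_{H\circ f^{-1}}$ is also correct. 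The one caveat is that, as written, this is a proof plan rather than a proof: the decisive step --- the actual variation of the Chern connection and the verification that the adjoint of $Ds^T$ is the Lichnerowicz-type operator dual to $H\mapsto\cL_{X_H}\Phi$ --- is deferred to ``transcribing'' Donaldson's calculation, and in the irregular case one should also note explicitly that the local charts do not patch into a global quotient, so the integration by parts must be carried out on $M$ itself using the transverse Hodge theory of~\cite{ElKac90} rather than on a leaf space. None of this is an error, but it is where all the work lives.
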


The Lichnerowicz operator
\begin{equation}\label{eq:Lich}
\cP_{\Phi} : C^\infty_b(M)\rightarrow T_{\Phi}\cK,
\end{equation}
$\cP_{\Phi} (H) =\cL_{X_H}\Phi$ gives the infinitesimal action of $\cG$ on $\cK$.

The theorem reads
\begin{equation}\label{eq:mmap}
\Omega_{\cK}(\cP_{\Phi}(H),A)=\langle H, D s^T(A)\rangle,\quad\forall A \in T_{\Phi}\cK.
\end{equation}

If we were in the finite dimensional situation, then the Kempf-Ness theorem would lead to the identification
\[ \cK^{s}{/\!/}\ccG=\mu^{-1}(0)/\cG, \]
where $\cK^{s}$ are the polystable points in $\cK$ and $\ccG$ is the complexified group.
A constant transverse scalar curvature metric, which is a zero point of the moment map $\mu$,
would correspond to a polystable complex orbit of the $\ccG$ action.

But there are two problems in
this situation.  First, in this infinite dimensional situation there is no local compactness allowing the usual arguments.
And secondly, the complexification $\ccG$ of $\cG$ does not exist as a group.

Although $\ccG$ does not exist we can define the action of the complexified Lie algebra $C^\infty_b(M,\C)$
of $\cG$ on $\cK$ since it is a complex manifold.  We extend (\ref{eq:Lich}) to
\begin{equation}\label{eq:Lich-com}
\cP_{\Phi} : C^\infty_b(M,\C)\rightarrow T_{\Phi}\cK,
\end{equation}
by taking $\i H$, $H\in C^\infty_b(M)$ to $\Phi\cL_{X_H}\Phi$.
Then we say that a smooth path $\Phi(t)\in\cK$ lies in an orbit of $\ccG$ if
\begin{equation}\label{eq:GC-orbit}
 \dot{\Phi}(t)\in\im\cP_{\Phi(t)},\quad\forall t.
\end{equation}

Note that the integrability of $\Phi\in\cK^i$ does not imply that $\cL_{\Phi X_\phi} \Phi$ equals $\Phi\cL_{X_\phi}\Phi$.
In fact, an easy computation using (\ref{eq:Phi-normal}) shows that
\begin{equation}\label{eq:comp-act}
 \cL_{\Phi X_\phi}\Phi(Y) =\Phi\cL_{X_\phi}\Phi(Y) +d\phi(Y)\otimes\xi,\text{ for }Y\in TM.
\end{equation}
But this shows one does have equality on the level of the transverse K\"{a}hler structure $(\omega^T,\ol{J})$, since
the last term acts trivially on basic forms.

\begin{prop}\label{prop:GC-orbit}
Let $(M,\eta,\xi,\Phi,g)$ be Sasakian.  Then up to a diffeomorphism preserving the Reeb foliation $\mathscr{F}_\xi$
the $\ccG$ orbit of $\Phi$ consists precisely of all structures $(\eta_\phi,\xi,\Phi_\phi ,g_\phi)$ with $\phi\in C^\infty_b(M)$ as in (\ref{eq:trans-def2}).
\end{prop}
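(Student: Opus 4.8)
The plan is to reduce the statement to the standard description of the transverse Kähler structure and to treat the ``real'' and ``imaginary'' parts of the complexified action separately. The $\ccG$-orbit directions at $\Phi$ are spanned by $\cP_\Phi(h)=\cL_{X_h}\Phi$ for real basic $h$, which are tangent to the $\cG$-orbit, and by $\cP_\Phi(\i h)=\Phi\cL_{X_h}\Phi$. The real directions integrate to strict contactomorphisms, which preserve $\eta$ and $\xi$ and hence $\mathscr{F}_\xi$; these are exactly the diffeomorphisms allowed in the statement, and they also absorb the harmonic and $d_b\psi$ terms of the decomposition (\ref{eq:cont-decom}). Thus the content of the proposition is that the imaginary directions, modulo $\cG$, sweep out precisely the family (\ref{eq:trans-def2}).

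For the inclusion of (\ref{eq:trans-def2}) in the orbit, fix $\phi\in C^\infty_b(M)$, set $\phi_t=t\phi$ and consider $\eta_t=\eta+d^c_b\phi_t$ with the associated $(\eta_t,\xi,\Phi_{\phi_t},g_t)$. Since each $d\eta_t$ has the same Reeb field $\xi$ and $\dot\eta_t=d^c_b\phi$ is basic, a Moser argument produces an isotopy $f_t$ with $f_0=\mathrm{id}$ and $f_t^*\eta_t=\eta$, generated by the transverse field $Y_t\in\ker\eta_t$ determined by $Y_t\contr d\eta_t=-d^c_b\phi$; one checks $[\xi,Y_t]=0$, so $f_t$ preserves $\mathscr{F}_\xi$. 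Then $\tilde\Phi_t:=f_t^*\Phi_{\phi_t}$ is a path in $\cK$ with $\tilde\Phi_0=\Phi$, and $\dot{\tilde\Phi}_t=f_t^*(\cL_{Y_t}\Phi_{\phi_t}+\dot\Phi_{\phi_t})$. The crux is to verify that this velocity lies in $\im\cP_{\tilde\Phi_t}$: concretely, one shows that $Y_t$ agrees, up to its $\xi$-component, with the gradient-type field $\Phi_{\phi_t}X_{h_t}$ for $h_t$ a fixed multiple of $\phi$, so that $\cL_{Y_t}\Phi_{\phi_t}$ matches $\Phi_{\phi_t}\cL_{X_{h_t}}\Phi_{\phi_t}$ on the transverse structure by (\ref{eq:comp-act}), the leftover $\dot\Phi_{\phi_t}$ and $dh_t\otimes\xi$ terms cancelling against one another.

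For the reverse inclusion, take a path $\Phi(t)$ with $\dot\Phi(t)=\cP_{\Phi(t)}(h_1(t)+\i h_2(t))$. The time-dependent flow of the contact fields $X_{h_1(t)}$ gives an isotopy $\sigma_t\in\cG$ absorbing the real part, so after replacing $\Phi(t)$ by $\sigma_t^*\Phi(t)$ we may assume $\dot\Phi(t)=\Phi(t)\cL_{X_{h_2(t)}}\Phi(t)$ is purely imaginary. By (\ref{eq:comp-act}) this equals $\cL_{\Phi(t)X_{h_2(t)}}\Phi(t)$ modulo a term valued in $\xi$; since $h_2(t)$ is basic one has $[\xi,\Phi(t)X_{h_2(t)}]=0$, so the flow of $\Phi(t)X_{h_2(t)}$ preserves $\mathscr{F}_\xi$ and acts on $\nu(\mathscr{F}_\xi)$ biholomorphically. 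Hence the imaginary flow fixes the transverse holomorphic structure $\bar J$ while moving $\omega^T$ within its fixed basic class $[\omega^T]_b$, the infinitesimal change being $d_bd^c_b$-exact. Integrating and applying the transverse $dd^c$-lemma (\cite[Lemma 3.1]{BoyGalSim08}) realises $\omega^T(t)=\omega^T(0)+\tfrac12 d_bd^c_b\phi(t)$ for basic $\phi(t)$, whence $\Phi(t)$ agrees with $\Phi_{\phi(t)}$ up to an $\mathscr{F}_\xi$-preserving diffeomorphism, as required.

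The main obstacle is the infinitesimal matching in the forward step: identifying the Moser field $Y_t$ with the imaginary gauge field $\Phi_{\phi_t}X_{h_t}$ and fixing the proportionality constant, which is where (\ref{eq:comp-act}) together with $\Phi^2=-Id+\xi\otimes\eta$ must be used carefully to see that the non-basic corrections cancel. Everything else is either a standard Moser or flow integration, or the transverse Hodge-theoretic decomposition already recorded in (\ref{eq:cont-decom}).
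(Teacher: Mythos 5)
Your proposal is correct and follows essentially the same route as the paper: your Moser field $Y_t$ (the unique section of $\ker\eta_t$ with $Y_t\contr d\eta_t=-d^c_b\phi$) is exactly the field $V_t=\Phi_t X^t_\phi$ used in the paper, the cancellation you describe via (\ref{eq:comp-act}) is the paper's key computation, and the converse is handled by the same flow along $\Phi(t)X_{\phi_t}$. The only cosmetic difference is that in the reverse direction the paper integrates $\tfrac{d}{dt}f_t^*\eta=-d^c(f_t^*\phi_t)$ to produce the potential $H_t$ explicitly rather than appealing to the transverse $dd^c$-lemma.
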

\begin{proof}
Let $\phi\in\cH_{\Phi}$.  Define the Sasakian structure $(\eta_t,\xi,\Phi_t),\ 0\leq t\leq 1,$
\begin{equation}\label{eq:cont-def}
 \eta_t =\eta +td^c \phi,\quad \Phi_t =\Phi-t\xi\otimes d\phi.
\end{equation}
Let $X^t_\phi$ be the strict contact vector field for $\eta_t$ with Hamiltonian $\phi$, and define
$V_t =\Phi_t X^t_\phi$.  We have
\[ \cL_{V_t} \eta_t = V_t \contr d\eta_t =-d^c \phi.\]
If $f_t$ is the flow associated to $V_t$, then
\[ \frac{d}{dt} f_t^* \eta_t =f_t^* \cL_{V_t} \eta_t +f_t^* d^c \phi =0. \]
Therefore $f_t^* \eta_t =\eta$, and the structure $(\eta_t,\Phi_t)$ is isometric to $(\eta,f_t^*\Phi_t)$,
where $f_t^*\Phi_t =f^{-1}_{t*}\circ\Phi_t \circ f_{t*}$.

We claim that $(\eta,f_t^*\Phi_t)$ is in the orbit of $\ccG$.  In fact
\begin{equation}
\begin{split}
\frac{d}{dt}f_t^*\Phi_t & = f_t^* \cL_{V_t}\Phi_t +f_t^*\bigl(-\xi\otimes d\phi \bigr) \\
                        & = f_t^*\bigl(\Phi_t \cL_{X^t_\phi}\Phi_t \bigr) \\
                        & = \bigl(f_t^* \Phi_t \bigr)\cL_{X_{f_t^* \phi}}\bigl(f_t^* \Phi_t \bigr),
\end{split}
\end{equation}
where $X_{f_t^* \phi}$ is the contact vector field for $\eta$ with Hamiltonian $f_t^* \phi$.
Thus $f_t^*\Phi_t$ satisfies (\ref{eq:GC-orbit}).

Conversely, suppose that $\Phi_t$ is a smooth path of almost contact-complex structures so that
$(\eta,\xi,\Phi_t)$ is Sasakian and $\Phi_0 =\Phi$.  Suppose this is contained in the $\ccG$
orbit of $\Phi$.  After possibly acting by contactomorphisms we may assume
\begin{equation}
\frac{d}{dt} \Phi_t =\Phi_t \cL_{X_{\phi_t}} \Phi_t,
\end{equation}
where $\phi_t \in C^\infty_b (M)$ is a smooth path and $X_{\phi_t}$ is the associated contact vector field.
Let $f_t$ be the flow of $\Phi_t X_{\phi_t}$.
Then
\[ \begin{split}
\frac{d}{dt}f_t^* \eta & = f_t^*\bigl(-d_t^c \phi_t \bigr) \\
                    & = -d^c \bigl( f_t^*\phi_t \bigr),
\end{split}\]
where $d^c_t$ is with respect to the transversal complex structure induced by $\Phi_t$.
The second equality is because $f_t^*\Phi_t$ and $\Phi$ induce the same transversal complex structure
from (\ref{eq:comp-act}).  And
\begin{equation}
f_t^* \eta -\eta =-d^c \int_0^t f^*_s \phi_s \,ds.
\end{equation}
Define $\eta_t =f_t^*\eta$, then $(\eta,\Phi_t)$ is isometric to $(\eta_t, f_t^*\Phi_t)$.
We have $\eta_t =\eta +d^c H_t$ where $H_t =-\int_0^t f^*_s \phi_s \,ds$.  But since
$f_t^*\Phi_t$ and $\Phi$ induce the same transversal complex structure, we must have
$f_t^*\Phi_t =\Phi - \xi\otimes dH_t$.
\end{proof}

\begin{rmk}
This orbit does not effect the harmonic $H_b^1$ or exact components in (\ref{eq:cont-decom}).
The exact component in (\ref{eq:cont-decom}) is given by the diffeomorphism $\exp(\psi\xi)$
Thus $\cS(\xi,\ol{J})$ consists of the $\ccG$ orbit of $\phi$ and variations by $H_b^1$.
But since these variations do not affect the transversal K\"{a}hler structure, this wont cause any issues.
\end{rmk}

\subsection{K-stability for Sasakian manifolds}\label{subsec:K-stab}

T. Collins and G. Sz{\'e}kelyhidi~\cite{ColSze12} defined test configurations and K-polystabiliy for irregular Sasakian manifolds
extending the work of J. Ross and R. Thomas~\cite{RosTho11} on the quasi-regular case.
To define test configurations for Sasakian manifolds, Collins and Sz\'ekelyhidi gave an algebraic interpretation of Reeb vector fields
on Sasakian manifolds which we recall now. Let $(M,g,\eta,\xi, \Phi)$ be a Sasakian manifold, then the metric cone
$(C(M)=\R_+\times M,\ \overline{g}=dr^2+r^2g)$ over $(M,g)$ is K\"ahler for the complex structure $I$.
As explained in \cite[Section 2]{ColSze12}, the cone $Y=C(M)\cup\{0\}$ is an affine variety with isolated singularity at $0$.

We may consider the cone $(Y,\xi)$ polarized by the Reeb vector field representing a polarized Sasakian manifold.
This is because any two Sasakian structures $(g, \eta, \xi,\Phi)$ and $(\tilde{g}, \tilde{\eta},\xi,\tilde{\Phi})$
with the same polarized cone $(Y,\xi)$ differ as in (\ref{eq:cont-decom}) with $\alpha =0$.
The Reeb vector field generates a torus action $T\subset \Aut(Y)$ with $\xi\in\t =\Lie(T)$, which of course extends to an
algebraic torus action $\TT\subset\Aut(Y)$.
Let $\cO_Y$ be the structure sheaf of $Y$ and consider the weight decomposition
\[H^0(Y,\cO_Y)=\sum_{\alpha\in\cW_T} H^0(Y,\cO_Y)_{\alpha} \]
where $\cW_T \cong\Z^k,\ k=\dim_{\C} \TT,$ are the weights of the $\TT$-action.

Then from $\eta(\xi)>0$ and \cite[proposition 2.1.]{ColSze12}, $\alpha(\xi)>0$ for each weight $\alpha\in\cW_T \setminus\{0\}$.
It turns out that the fact that the Reeb vector field acts with positive weights on the non-constant
functions of $Y$ gives an algebraic characterization of the Reeb cone:
\[ \lbrace \xi'\in \t\ :\ \eta(\xi')>0 \rbrace = \lbrace \xi'\in \t\ :\ -\sqrt{-1}\alpha(\xi')>0, \forall \alpha\in\cW_T \setminus\{0\}\rbrace.\]
It suggests the following algebraic definition of a Reeb field:
\begin{defn}
 A Reeb field on an affine scheme $Y$ with torus $T\subset \Aut{Y}$ is an element $\xi'\in\t$ such that
\[ -\sqrt{-1}\alpha(\xi')>0\; \forall \alpha\in\cW_T \setminus\{0\}. \]
\end{defn}

On the polarized cone $(Y,\xi)$ it remains to define the notion of a compatible K\"{a}hler metric.
\begin{defn}
A K\"ahler metric on an affine scheme $Y$ is compatible with a Reeb field $\xi\in \t$
if there exists a $\xi$-invariant function $r : Y \rightarrow \R_+$ such that $\om=\frac{\i}{2} \del \delb r^2$ and
$\xi=I(r\del_r)$ where $I$ is the almost complex structure on $Y$.
\end{defn}

Any polarized affine variety $(Y,\xi)$ smooth except at possibly one point admits a K\"{a}hler metric $\om$ compatible with
the Reeb field $\xi$, and $(Y,\xi,\om)$ is the metric cone over a Sasakian manifold.
To see this let $T$ be the torus generated by $\xi$ and choose sufficiently many $T$-homogenous generators
$\{f_1,\ldots,f_d \},\ f_i \in H^0(Y,\cO_Y)_{\alpha_i},\ \alpha_i \in\cW_T$, then

\[(f_1,\ldots, f_d): Y\rightarrow\C^d \]

is an embedding with $T$ acting diagonally on $\C^d$.  Then there exists a Sasakian structure on the sphere
$S^{2d-1}$ with Reeb vector field $\hat{\xi}$ restricting to $\xi$.
In particular, a Sasakian manifold $(M,g,\eta,\xi,\Phi)$ can equivalently be defined as an algebraic scheme $Y=C(M)\cup\{0\}$
smooth away from $0$ with Reeb vector field $\xi$ and compatible K\"ahler metric $\om$.

We can now recall the definition of test configurations for Sasakian manifolds from \cite{ColSze12}. Let $Y$ be an affine variety polarized by a Reeb vector field $\xi\in \t$, with $\t=\Lie(T)$ and $T\subset\Aut(Y)$.

\begin{defn}\label{defn:test-conf}
A T-equivariant \emph{test configuration} for $Y$ is a set of $T$-homogeneous elements $\lbrace f_1,...,f_k \rbrace $
that generate $H^0(Y,\cO_Y)$ in sufficiently high degrees together with a set of integers $\lbrace w_1,...,w_k  \rbrace$.
\end{defn}

This definition generalizes the usual test configurations for polarized manifolds or orbifolds.
Given the set of generators $\lbrace f_j \rbrace$, we can embed $Y$ into $\C^k$ and consider the $\C^*$-action on $\C^k$ with weights $(w_1,\ldots,w_k)$.  Then the flat limit $Y_0$ over $0\in\C$
of the $\C^*$-orbit of $Y$ provides a flat family of affine schemes over $\C$. Moreover, the central fiber $Y_0$ is invariant
under the $\C^*$ action defined by the weights $\lbrace w_j \rbrace$.
We will say that this test configuration is a \emph{product configuration} if $Y_0$ is isomorphic to $Y$.

Let $\upsilon$ be a generator of the $\C^*$ action on $Y_0$ defined by the weights $\lbrace w_j \rbrace$.
A weight is assigned to the test configuration, $\Fut(Y_0,\xi,\upsilon)$, the so-called Donaldson-Futaki invariant.
This weight was first defined by Futaki in the smooth case \cite{Fut83}, and then
generalized by Donaldson to the algebraic setting \cite{Don02}.  Lastly, making use of the Hilbert series, Collins and Sz\'ekelyhidi
managed to extend the definition to the Sasakian case.  See \cite[Definition 5.2.]{ColSze12}.

\begin{defn}
A polarized affine variety $(Y,\xi)$ is \emph{K-semistable} if, for every torus $T$, $\xi\in\Lie(T)$, and every $T$-equivariant test
configuration with central fiber $Y_0$, the Donaldson-Futaki invariant satisfies
\[ \Fut(Y_0,\xi,\upsilon) \leq 0 \]
with $\upsilon$ a generator of the induced $\C^*$ action on the central fiber.

It is \emph{K-polystable} if the equality holds if and only if the $T$-equivariant test configuration is a product configuration.
\end{defn}

We will only make use of the Futaki invariant for smooth test configurations, so we wont need it in its full generality.
In~\cite{BoyGalSim08} the Futaki invariant is adapted to the Sasakian case.  In this case it gives a character on the
transversely holomorphic vector fields.  A transversely holomorphic vector field is a complex vector field $X$
which projects to a holomorphic vector field on every holomorphic foliation chart (\ref{eq:fol-chart}).
We assume that $X$ is Hamiltonian, i.e. has a potential, so there is an $H\in C^\infty_b(M,\C)$ with
$\delb H =-\frac{1}{2}d\eta(X,\cdot)$.  Then the Futaki invariant is
\begin{equation}\label{eq:Fut}
\cF_{\xi,\Phi}(X)= -\int_M H(s-s_0)\, d\mu_{\eta},
\end{equation}
and only depends on the polarization and transversal complex structure.

The next lemma essentially shows that for smooth Sasakian manifolds both these
definitions are the same.
\begin{lem}\label{lem:smooth}
Let $(M_0,\xi_0,\Phi_0)$ be a polarized Sasakian manifold with corresponding affine K\"ahler cone $(Y_0=C(M_0),\xi,\om_0)$.
Suppose $\upsilon$ is the generator of a holomorphic $\C^*$-action on $Y_0$ commuting with $\xi$.
Then, there is a constant $c(n)>0$ depending only on the dimension such that
\[ Fut(Y_0,\xi,\upsilon)=c(n) \cF_{\xi,\Phi_0}(\upsilon_{M_0}). \]
\end{lem}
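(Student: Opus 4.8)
The plan is to evaluate both sides in terms of the same transverse Kähler data of $M_0$ --- the volume $\int_{M_0}\eta\wedge(\omega^T)^m$ and the total transverse scalar curvature $\int_{M_0}s^T\,d\mu_\eta$, each weighted by the Hamiltonian potential of $\upsilon$ --- and then to match the normalizing constants. First I would unwind the algebraic side. Let $\TT$ be the algebraic torus generated by $\xi$ and $\upsilon$, and form the index character
\[ F(\xi,t)=\sum_{\alpha\in\cW_T}\dim H^0(Y_0,\cO_{Y_0})_\alpha\, e^{-t\alpha(\xi)}, \]
which, since $\dim_\C Y_0=m+1$, has a pole of order $m+1$ at $t=0$ with Laurent expansion $F(\xi,t)=a_0(\xi)\,m!\,t^{-m-1}+a_1(\xi)(m-1)!\,t^{-m}+O(t^{-m+1})$. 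Because $\upsilon$ generates a product configuration (it is a genuine $\C^*$-action on $Y_0$, not a degeneration), the definition of~\cite{ColSze12} reduces $\Fut(Y_0,\xi,\upsilon)$ to a fixed linear combination
\[ \Fut(Y_0,\xi,\upsilon)=\kappa\left(\frac{a_1}{a_0}\,D_\upsilon a_0-D_\upsilon a_1\right), \]
where $D_\upsilon$ is the derivative of the coefficients along the Reeb perturbation $\xi\mapsto\xi+s\upsilon$ and $\kappa>0$ depends only on $m$.

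The second step is to translate the coefficients $a_0,a_1$ into integrals over $M_0$ by the equivariant index theorem (equivalently, the Martelli--Sparks--Yau localization of the index character for the $\TT$-action). Since $M_0$ is smooth and $Y_0$ has an isolated singularity at the apex, this yields, up to dimensional constants,
\[ a_0(\xi)=c_0\int_{M_0}\eta\wedge(\omega^T)^m,\qquad a_1(\xi)=c_1\int_{M_0}\rho^T\wedge\eta\wedge(\omega^T)^{m-1}. \]
Comparing with the formula for $s^T_0$ recorded after~\eqref{eq:Calabi-Sasak}, the ratio $a_1/a_0$ is exactly a multiple of $s^T_0$. Differentiating along $\upsilon$ inserts the potential $H$ of $\upsilon_{M_0}$ (normalized by $\delb H=-\tfrac{1}{2}d\eta(\upsilon_{M_0},\cdot)$) into these integrals, so that $D_\upsilon a_0\propto\int_{M_0}H\,d\mu_\eta$ and $D_\upsilon a_1\propto\int_{M_0}H\,s^T\,d\mu_\eta$.

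Assembling the combination above, the term $\tfrac{a_1}{a_0}D_\upsilon a_0$ supplies precisely the average, and the difference becomes
\[ \Fut(Y_0,\xi,\upsilon)=c(n)\left(-\int_{M_0}H\,(s^T-s^T_0)\,d\mu_\eta\right)=c(n)\,\cF_{\xi,\Phi_0}(\upsilon_{M_0}), \]
where the last equality uses $s^T-s^T_0=s-s_0$ from Proposition~\ref{prop:Sasaki-Ric}.\ref{eq:submer-scal} together with~\eqref{eq:Fut}, and $c(n)>0$ is read off from the accumulated constants ($m!$, $(m-1)!$, and powers of $2\pi$).

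The main obstacle is the second step: rigorously passing from the algebraic weight sum to the differential-geometric integrals via localization while controlling the contribution of the apex singularity --- either by working on the smooth locus $M_0$, where the index localizes, or on a resolution and checking that the singular term is absorbed --- and tracking every normalization so that $c(n)$ comes out strictly positive and dimension-dependent only. A secondary subtlety is verifying that the Collins--Sz\'ekelyhidi combination of the $a_0$- and $a_1$-derivatives is exactly the one producing $s-s_0$ rather than $s$, i.e.\ that subtracting the trivial (product) part of the configuration is precisely what furnishes the average $s_0$ with the correct sign.
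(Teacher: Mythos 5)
Your strategy is sound but follows a genuinely different route from the paper's. The paper first reduces to the quasi-regular case: it passes to the quotient polarized orbifold $(X,\mathbf{L})$, computes the Riemann--Roch and weight-expansion coefficients $a_0,a_1,b_0,b_1$ by the Ross--Thomas Chern--Weil argument on the associated $(X,\mathbf{L})$-bundle over $\PP^1$, and obtains $\Fut(Y_0,\xi,\upsilon)=\frac{1}{4\pi n!}\int_X H(s_0-s)\omega^n$ directly; the irregular case is then handled by observing that both $\Fut$ and $\cF_{\xi,\Phi}$ scale as $c^{-(n+1)}$ under $\xi\mapsto c\xi$, approximating $\xi$ by Reeb fields proportional to integral ones, and passing to the limit by continuity of both sides. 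You instead work directly on the affine cone with the index character and its equivariant localization, which avoids both the quasi-regular reduction and the limiting argument, but transfers the analytic burden to the asymptotic expansion of the index character for an \emph{irregular} Reeb field and the identification of $a_1$ with the total transverse scalar curvature --- precisely the Collins--Sz\'ekelyhidi/Martelli--Sparks--Yau input you invoke, and the step you correctly single out as the main obstacle. Both routes are legitimate: the paper's is more elementary granted Ross--Thomas, yours is more uniform in $\xi$ and closer to the definitions actually used for irregular structures.

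One caveat on your intermediate formula $\Fut=\kappa\bigl(\tfrac{a_1}{a_0}D_\upsilon a_0-D_\upsilon a_1\bigr)$: a single overall constant $\kappa$ is not literally what the definition gives. Since the weight character is $-t^{-1}$ times the derivative of the index character along $\xi\mapsto\xi+s\upsilon$, the pole order shifts by one, and $b_0$ is proportional to $D_\upsilon a_0$ with a combinatorial factor different from the one relating $b_1$ to $D_\upsilon a_1$ (one involves $m+1$, the other $m$). This discrepancy is exactly compensated by the homogeneity degrees of $a_0$ and $a_1$ in $\xi$, which reappear as matching factors when $D_\upsilon a_0$ and $D_\upsilon a_1$ are converted into $\int_{M_0}H\,d\mu$ and $\int_{M_0}Hs^T\,d\mu$; only after this cancellation does the combination collapse to a single positive constant times $-\int_{M_0}H(s^T-s^T_0)\,d\mu$. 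You defer "tracking every normalization," and this is the one place where that bookkeeping genuinely matters --- both for producing the average $s^T_0$ with the correct coefficient and for concluding $c(n)>0$.
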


\begin{proof}
Since $\upsilon_{\vert M_0}$ commutes with $\xi$, it induces a transversally holomorphic vector field $\upsilon_{M_0}$ on $(M_0,\xi,\Phi_0)$.

First, suppose $\xi$ is quasi-regular.  We will show the Donaldson-Futaki invariant computed on the quotient polarized
orbifold $(X,\mathbf{L})$ is equal to (\ref{eq:Fut}).  We basically extend the computations in~\cite[Sect. 2.9]{RosTho11}.
Recall from~\cite{RosTho11} that the orbifold Riemann-Roch gives
\begin{align}
& h^0(X,\mathbf{L}^k) = a_0 k^n +a_1 k^{n-1} +\tilde{o}(k^{n-1})\label{eq:R-R1}\\
& w(H^0(X,\mathbf{L}^k)) = b_0 k^{n+1} +b_1 k^n +\tilde{o}(k^n),
\end{align}
where $w(H^0(X,\mathbf{L}^k)$ is the \emph{total weight} of $\upsilon$.
Here $\tilde{o}(k^{n-1})$ means a sum of terms in $k$ lower order than $n-1$ plus terms of the form $r(k)\delta(k)$ where
$r(k)$ is a polynomial of degree $k$ and $\delta(k)$ is periodic in $k$ of period $\Ord(X)$ and average 0.

Let $\omega\in c_1(\mathbf{L})$ be the orbifold K\"{a}hler form on $X$.  Using the induced metric on $\mathbf{K}^{orb}_X$ we have
the Ricci form $\rho\in -2\pi c_1(\mathbf{K}^{orb}_X)$.
The coefficients
\[ a_0 =\frac{1}{n!}\int_X \omega^n,\quad a_1 =\frac{1}{4\pi (n-1)!}\int_X \omega^{n-1}\wedge\rho\]
and
\[ b_0 =\frac{1}{n!}\int_X H\omega^n, \]
were computed in~\cite{RosTho11}, where $H$ is the Hamiltonian of $\upsilon$.

Let $\mathcal{O}_{\PP^1}(1)^*$ be the principal $\C^*$-bundle associated to $\mathcal{O}(1)$.  Form the associated
$(X,\mathbf{L})$-bundle
\begin{equation}\label{eq:assoc}
 (\mathcal{X},\mathbf{\cL}):= \mathcal{O}_{\PP^1}(1)^* \times_{\C^*} (X,\mathbf{L}).
\end{equation}
If $\pi:\mathcal{X}\rightarrow\PP^1$ is the projection then $\pi_* \mathbf{\cL}^k$ is the associated bundle of the
$\C^*$-representation $H^0(X,\mathbf{L}^k)$.  We have (cf.~\cite{RosTho11})
\begin{equation}
w(H^0(X,\mathbf{L}^k)) = \chi(\mathcal{X},\mathbf{\cL})-\chi(X,\mathbf{L}^k),
\end{equation}
where for $k>>1$ the right hand side is expressed using (\ref{eq:R-R1}).  This gives
\begin{equation}\label{eq:b1}
b_1 =-\frac{1}{2n!}\int_\mathcal{X} c_1(\mathbf{\cL})^n c_1(\mathbf{K}^{orb}_{\mathcal{X}}) -\frac{1}{n!}\int_X c_1(\mathbf{L})^n.
\end{equation}
It was shown by Ross and Thomas (see also~\cite{Don05}) that for associated bundle in (\ref{eq:assoc})
$c_1(\mathcal{\cL})=H\omega_{FS} +\omega$, where $\omega_{FS}$ is the
Fubini-Study metric on $\PP^1$.  Note that $\mathbf{K}^{orb}_{\mathcal{X}} =\pi^* \mathbf{K}_{\PP^1}\otimes\mathbf{\cK}$,
where $\mathbf{\cK}$ is the associated bundle on $\mathcal{X}$ to $\mathbf{K}_X$ as in (\ref{eq:assoc}).
Then the same argument gives
\[ c_1(\mathbf{K}^{orb}_{\mathcal{X}})=(f-2)\omega_{FS}-\frac{1}{2\pi}\rho, \]
where $f$ is the ``Hamiltonian'' for the action of $\upsilon$ on $\mathbf{K}_X$.
Substituting these into (\ref{eq:b1}) give
\[ \begin{split}
b_1 & =-\frac{1}{2n!}\int_{\mathcal{X}}(\omega^n +nH\omega_{FS}\wedge\omega^{n-1})\wedge((f-2)\omega_{FS} -\frac{1}{2\pi}\rho) -\frac{1}{n!}\int_X \omega^n \\
 & =-\frac{1}{2n!}\int_{\mathcal{X}}(f-2)\omega_{FS} \wedge\omega^n -\frac{n}{2\pi}H\omega_{FS} \wedge\omega^{n-1}\wedge\rho-\frac{1}{n!}\int_X \omega^n \\
 & =-\frac{1}{2n!}\int_X f\omega^n +\frac{1}{4\pi n!}\int_X sH\omega^n \\
 & =\frac{1}{4\pi n!}\int_X sH\omega^n,
\end{split} \]
where the last step follows because $f$ can be shown to be the divergence of $\upsilon$.

Recall that (cf.~\cite{RosTho11} and~\cite{ColSze12}
\[ \Fut(Y_0,\xi,\upsilon)=\frac{a_1 b_0 -a_0 b_1}{a_0}, \]
then substituting the expressions for $a_0,a_1,b_0$ and $b_1$ gives
\[ \Fut(Y_0,\xi,\upsilon) =\frac{1}{4\pi n!}\int_X H(s_0 -s)\omega^n.\]

For the general case first note that if we rescale $\tilde{\xi} =c\xi,\ c>0,$ then
\[\Fut(Y_0,\tilde{\xi},\upsilon) =c^{-(n+1)} \Fut(Y_0,\xi,\upsilon).\]
And similarly
\[\cF_{\tilde{\xi},\Phi}(\upsilon) =c^{-(n+1)}\cF_{\xi,\Phi}(\upsilon).\]
So the lemma is proved for any $\xi$ proportional to an integral element of $\t=\Lie(T)$.

In the irregular case, by \cite[Corollary 2]{ColSze12}, there is a sequence $\xi_j$ of Reeb vector fields proportional to integral vector fields on $Y_0$ such that $\xi_j \rightarrow \xi \in \t$.  Both the transversal Futaki invariant and the Donaldson-Futaki invariant depend continuously on $\xi_j$ and
the result follows at the limit.
\end{proof}

\section{Deformations and stability}\label{sec:def-stab}

We are interested in the deformation
theory of cscS Sasakian metrics.  In this section we show
how the relationship between cscS metrics and stability in the GIT sense can
be used to give an algebraic criterion for deformation of canonical Sasakian metrics.

\subsection{Deformation complexes}

We describe two complexes relevant to the deformations of a Sasakian structure $(M,\eta,\xi,\Phi_0)$ that we will consider.
The first describes deformations of the transversal complex structure of the Reeb foliation $(\mathscr{F}_\xi,\ol{J})$.

Define $\mathcal{A}^k :=\Gamma(\Lambda^{0,k}_b\otimes\nu(\mathscr{F})^{1,0})$.  We have the complex
\begin{equation}\label{eq:Dol-comp}
 0\rightarrow \mathcal{A}^{0} \overset{\ol{\partial}_b}{\longrightarrow}\mathcal{A}^{1} \overset{\ol{\partial}_b}{\longrightarrow}\mathcal{A}^{2}\rightarrow\cdots,
\end{equation}
which we denote by $\mathcal{A}^\bullet$.
This is the basic version of the complex used by Kuranishi~\cite{Kur65} whose
degree one cohomology $H^1(\mathcal{A}^\bullet)$ is the space of first order deformation of the transversal complex
structure $\ol{J}$ modulo foliate diffeomorphisms.
In~\cite{ElKacNic89} and~\cite{Gir92} it was shown that $H^1(\mathcal{A}^\bullet)$ is the tangent space to a
versal deformation space of $(\mathscr{F}_\xi,\ol{J})$ preserving the smooth foliation $\mathscr{F}_\xi$.

For the second complex, let $E^k,\ k\geq 1,$ be the kernel of the map
\[ \Lambda^{0,k}_b\otimes\nu(\mathscr{F})^{1,0}\cong\Lambda^{0,k}_b\otimes\Lambda^{0,1}\rightarrow\Lambda^{0,k+1}_b. \]
Define $\mathcal{B}^k :=\Gamma(E^k),\ k\geq 1$, and $\mathcal{B}^0 :=C^\infty_b(M,\C)$.
Note that $\mathcal{B}^1 =T_{\Phi_0}\cK$, and we define a complex $\mathcal{B}^\bullet$ by
\begin{equation}\label{eq:Sze-comp}
0\rightarrow C^\infty_b(M,\C) \overset{\cP}{\longrightarrow} T_{\Phi_0}\cK \overset{\ol{\partial}_b}{\longrightarrow}\mathcal{B}^{2}\rightarrow\cdots,
\end{equation}
where $\cP$ is (\ref{eq:Lich-com}).  The remaining maps are the same operators $\delb_b$ as above.
Then $H^1(\mathcal{B}^\bullet)$ is the space of first order deformations of $\Phi_0$ modulo the action of
$\ccG$.

There is a mapping of the complex $\cB^\bullet$ to $\cA^\bullet$.  In degree zero, this is
\begin{equation*}
\begin{array}{rcl}
C_b^\infty(M,\C) & \longrightarrow & \cA^0 \\
f & \mapsto & (\delb f)^{\sharp}
\end{array}
\end{equation*}
where $(\cdot)^{\sharp}$ denotes index raising by the transversal K\"{a}hler metric.  And for $k\geq 1$ the mapping is
the inclusion.

In many cases these two complexes give the same deformation space.
\begin{prop}\label{prop:comp-isom}
The induced map in cohomology $H^k(\cB^\bullet)\rightarrow H^k(\cA^\bullet),\ k\geq 1$, is injective if $H^{0,k}_b =0$
and is surjective if
$H^{0,k+1}_b =0$, where $H^{0,\bullet}_b$ denotes the transversal Dolbeault cohomology.
\end{prop}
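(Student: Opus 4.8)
The plan is to realize the comparison map $H^\bullet(\cB^\bullet)\to H^\bullet(\cA^\bullet)$ as coming from a short exact sequence of complexes and then read the statement off the associated long exact sequence. Concretely, using the identification $\nu(\mathscr{F})^{1,0}\cong\Lambda^{0,1}_b$ given by the transverse K\"ahler metric followed by wedging, one has for $k\geq 1$ the antisymmetrization $\mathrm{alt}\colon\cA^k=\Gamma(\Lambda^{0,k}_b\otimes\nu(\mathscr{F})^{1,0})\to\Gamma(\Lambda^{0,k+1}_b)$, whose kernel is exactly $\Gamma(E^k)=\cB^k$. I would show that this fits into a short exact sequence of complexes
\[
0\to \cB^\bullet\to\cA^\bullet\overset{\mathrm{alt}}{\longrightarrow}\cC^\bullet\to 0,
\]
where $\cC^\bullet$ is the truncated basic Dolbeault complex, $\cC^k=\Gamma(\Lambda^{0,k+1}_b)$ with differential $\delb_b$. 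Granting this, the long exact sequence reads
\[
\cdots\to H^{0,k}_b\overset{\partial}{\longrightarrow} H^k(\cB^\bullet)\to H^k(\cA^\bullet)\to H^{0,k+1}_b\overset{\partial}{\longrightarrow} H^{k+1}(\cB^\bullet)\to\cdots,
\]
since a direct computation (below) identifies $H^k(\cC^\bullet)=H^{0,k+1}_b$. The proposition is then immediate: $H^k(\cB^\bullet)\to H^k(\cA^\bullet)$ is injective exactly when the incoming connecting map vanishes, hence whenever $H^{0,k}_b=0$, and surjective exactly when the outgoing term vanishes, hence whenever $H^{0,k+1}_b=0$.

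First I would build the short exact sequence. At the bundle level, for $k\geq 1$ the map $\Lambda^{0,k}_b\otimes\Lambda^{0,1}_b\to\Lambda^{0,k+1}_b$ is surjective with kernel $E^k$ by definition, so $0\to E^k\to\Lambda^{0,k}_b\otimes\nu(\mathscr{F})^{1,0}\to\Lambda^{0,k+1}_b\to 0$ is exact; being a sequence of smooth vector bundles it splits and stays exact after taking sections. Degree zero is exceptional: $\cB^0=C^\infty_b(M,\C)$ maps to $\cA^0\cong\Gamma(\Lambda^{0,1}_b)$ by $f\mapsto(\delb_b f)^\sharp$, i.e.\ by $\delb_b$. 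On compact $M$ the kernel is exactly the constants (the only transversely holomorphic basic functions), and these lie in $\ker\cP$; replacing $\cB^0$ by $\cB^0/\C$ therefore leaves $H^k(\cB^\bullet)$ unchanged for $k\geq 1$ while making the degree-zero map injective, with cokernel $\cC^0=\Gamma(\Lambda^{0,1}_b)/\delb_b C^\infty_b(M,\C)$. A short check of $\cC^\bullet$ then gives $H^k(\cC^\bullet)=H^{0,k+1}_b$ for every $k\geq 0$ (in particular $H^0(\cC^\bullet)=H^{0,1}_b$ and $H^1(\cC^\bullet)=H^{0,2}_b$, which is what the case $k=1$ needs).

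The heart of the argument—and the step I expect to be the main obstacle—is that $\mathrm{alt}$ is a chain map, equivalently that $\delb_b$ on $\nu(\mathscr{F})^{1,0}$-valued forms preserves $E^\bullet$ and induces the honest basic $\delb_b$ on the quotient. Working in a transversal holomorphic chart with the $\delb_b$-closed frame $\del_{z^i}$ of $\nu(\mathscr{F})^{1,0}$, the metric identification sends $\alpha\otimes\del_{z^i}$ to $g_{i\ol j}\,\alpha\wedge d\ol z^j$, and one computes
\[
\delb_b\big(\mathrm{alt}(\alpha\otimes\del_{z^i})\big)-\mathrm{alt}\big(\delb_b(\alpha\otimes\del_{z^i})\big)=(\del_{\ol z^k}g_{i\ol j})\,d\ol z^k\wedge\alpha\wedge d\ol z^j.
\]
Here the transverse K\"ahler condition enters decisively: $\del_{\ol z^k}g_{i\ol j}$ is symmetric in $\ol k,\ol j$ while $d\ol z^k\wedge d\ol z^j$ is antisymmetric, so the error term vanishes identically. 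Thus $\mathrm{alt}\circ\delb_b=\delb_b\circ\mathrm{alt}$, $E^\bullet$ is $\delb_b$-stable, and the differential induced on $\cC^\bullet$ is the genuine basic $\delb_b$, justifying $H^k(\cC^\bullet)=H^{0,k+1}_b$.

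With the short exact sequence and this cohomology computation in place, the long exact sequence yields the statement precisely as above. Everything apart from the chain-map identity is formal: the exactness of the section functor on vector bundles, the harmless modification in degree zero, and the elementary identification of the quotient complex. I would additionally verify the degree-zero compatibility $\cP(f)=\delb_b\big((\delb_b f)^\sharp\big)$, which is where the Lichnerowicz operator $\cP$ matches $\delb_b\circ\grad$ and makes $\cB^\bullet\to\cA^\bullet$ a chain map at the bottom; the existence of this map is, however, already asserted in the text.
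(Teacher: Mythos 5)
Your proof is correct, and at bottom it is the paper's argument: both rest on the splitting $\cA^k=\cB^k\oplus\Lambda^{0,k+1}_b$ and on the fact that the skew-symmetrization $\cA^k\to\Gamma(\Lambda^{0,k+1}_b)$ intertwines $\delb_b$ on $\nu(\mathscr{F})^{1,0}$-valued forms with the basic Dolbeault operator. The difference is one of packaging: the paper performs the resulting diagram chase by hand in each degree (given $\delb_b\gamma=\beta\in\cB^k$ it corrects $\gamma$ by $\delb_b\alpha^\sharp$ using $H^{0,k}_b=0$ for injectivity, and projects a closed $\beta\in\cA^k$ onto the $\Lambda^{0,k+1}_b$ summand and corrects using $H^{0,k+1}_b=0$ for surjectivity, with a separate $k=1$ case involving $\cP f=\delb_b\del^\sharp f$), whereas you organize the same data into a short exact sequence of complexes and read the statement off the long exact sequence. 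Your version buys two things: the degree-zero anomaly (non-injectivity of $f\mapsto(\delb f)^\sharp$, which you fix by quotienting out the constants, harmlessly since they lie in $\ker\cP$) is handled once and for all rather than as an ad hoc extra case, and, more substantively, you actually prove the chain-map identity $\delb\,\skw(\gamma^\flat)=\skw(\delb_b\gamma^\flat)$ via the transverse K\"ahler symmetry $\del_{\ol{z}^k}g_{i\ol{j}}=\del_{\ol{z}^j}g_{i\ol{k}}$ --- an identity the paper's chase uses without justification, and which is also what makes $E^\bullet$ a subcomplex in the first place. The paper's argument is shorter to write down but hides exactly this point; yours is longer but self-contained and makes the role of the K\"ahler condition explicit.
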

\begin{proof}
Suppose $H^{0,k}_b =0$ and $\beta\in\cB^k,\ k\geq 2$.  If $[\beta]=0$ in $H^k(\cA)$, there exists a $\gamma\in\cA^{k-1}$ with
$\delb_b \gamma =\beta$.  Since $\delb_b \gamma \in\cB^k$, $\delb\skw(\gamma^\flat)=\skw(\delb_b \gamma^\flat) =0$, where
$\delb$ is the ordinary Dolbeault operator and $\gamma^\flat$ is the section of $\Lambda_b^{0,k-1}\otimes\Lambda^{0,1}_b$ obtained
from the transversal K\"{a}hler form.  By assumption there is an $\alpha\in\Gamma(\Lambda^{0,k-1}_b)$ with
$\delb\alpha=\skw(\gamma)$.  Let $\theta =\gamma-\delb_b \alpha^\sharp$.  Since
$\skw(\delb_b \alpha^\sharp)=\delb\alpha =\skw(\gamma^\flat)$, $\theta\in\cB^{k-1}$.  Since $\del_b \theta =\beta$, $[\beta]=0$ in
$H^k(\cB)$.

If $k=1$ and there exists a $\gamma\in\cA^0$ with $\delb_b \gamma =\beta$, then $\delb\gamma^\flat =0$.  There exists
an $f\in C^\infty_b(M,\C)$ with $\delb f=\gamma^\flat$.  Thus $\cP f =\delb_b \del^\sharp f =\beta$.

Suppose $\beta\in\cA^k,\ k\geq 1,\ \delb_b \beta =0$ and $H^{0,k}_b =0$.  Write $\beta=\beta_1 +\beta_2$ with respect to
\[ \cA^k =\cB^k \oplus\lambda_b^{0,k+1}. \]
Then $0=\skw(\delb_b \beta^\flat) =\skw(\delb_b \beta_2) = \delb\beta_2$, and there exists $\gamma\in\Gamma(\Lambda^{0,k}_b)$ with
$\delb\gamma =\beta_2$.  One easily sees that $\beta -\delb_b \gamma^\sharp \in\cB^k$.
\end{proof}

\subsection{Construction of the slice}\label{sec:slice}

We will construct a slice for the complex (\ref{eq:Sze-comp}) on a Sasakian manifold $(M,\eta,\xi,\Phi_0)$.
The transverse metric and the $L^2$ inner product on forms enable us to define Sobolev norms on $\cB^\bullet$ and we can define adjoint operators $\cP^*$ and $\delb_b^*$.
Then the space $H^1(\cB^\bullet)\simeq ker((\delb^* \delb) ^2 + \cP\cP^*)$ encodes infinitesimal deformations of the transverse complex structures that are compatible with $\eta$ modulo the action of $\ccG$.  This space is finite dimensional
as it is the kernel of the fourth order transversely elliptic operator
\[\Box_{\Phi_0}=(\delb_b^* \delb_b)^2 + \cP\cP^*\]
(cf.~\cite{ElKac90}).

Let $G$ be the stabilizer of $\Phi_0$ in $\cG$, then $G=\Aut(M,\eta,\xi,\Phi_0)$ and is thus compact.
This group acts linearly on $T_{\Phi_0}\cK $ and on  $H^1(\cB^\bullet)$.
The group $G$ also has a complexification $G^{\C}$ which also acts on these spaces.

\begin{prop}\label{prop:slice}
There is a holomorphic $G$-equivariant map $S$ from a neighborhood of zero $B$ in $H^1(\cB^\bullet)$ into a neighborhood of $\Phi_0$
in $\cK$ such that the $\ccG$ orbit of every integrable $\Phi$ close to $\Phi_0$ intersects the image of $S$.
Moreover, if $x$ and $x'$ lie in the same $G^{\C}$ orbit in $U$ and $S(x)\in\cK^i$, then $S(x)$ and $S(x')$ are in the same $\ccG$ orbit in $\cK$.
\end{prop}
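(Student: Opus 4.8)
The plan is to realise $S$ as a Kuranishi‑type slice for the $\ccG$‑action that is transverse to the orbit directions $\im\cP$, and then to match the linear $G^{\C}$‑action on $H^{1}(\cB^{\bullet})$ with the $\ccG$‑action on $\cK$ along its image. First I would record the Hodge theory coming from the transverse ellipticity of $\Box_{\Phi_0}=(\delb_b^{*}\delb_b)^{2}+\cP\cP^{*}$. Since
\[
\langle\Box_{\Phi_0}A,A\rangle=|\delb_b^{*}\delb_b A|^{2}+|\cP^{*}A|^{2},
\]
the harmonic space is $\cH^{1}:=\ker\Box_{\Phi_0}=\ker\delb_b\cap\ker\cP^{*}\cong H^{1}(\cB^{\bullet})$, which is finite dimensional, and because $\delb_b\cP=0$ one obtains the orthogonal decomposition
\[
T_{\Phi_0}\cK=\cH^{1}\oplus\im\cP\oplus\im\delb_b^{*},
\]
with associated Green operator $\mathscr{G}$. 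I would carry out the analysis on the Sobolev completions $\cK^{s}$ and recover smoothness of the final object by elliptic regularity for $\Box_{\Phi_0}$.

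Using the complex parametrisation of Proposition~\ref{prop:com-chart}, I would write a nearby structure through its operator $P$ and solve, for $x\in\cH^{1}$ small, a Kuranishi‑type fixed‑point equation of the form
\[
P(x)=x+\tfrac12\,\delb_b^{*}\mathscr{G}\,[P(x),P(x)],
\]
where the quadratic term $[P,P]$ lies in $\cB^{2}$. The right‑hand side depends holomorphically on $x$, so the contraction/implicit function argument gives a unique small solution with $P(x)=x+O(|x|^{2})$; hence $x\mapsto P(x)$ is holomorphic and $dS_{0}$ is the harmonic inclusion. Two features are built in. First, $P(x)-x\in\im\delb_b^{*}$; since $\cP^{*}\delb_b^{*}=(\delb_b\cP)^{*}=0$ and $x\in\ker\cP^{*}$, this forces $\cP^{*}P(x)=0$, so $\im S$ lies in the gauge slice $\{\cP^{*}=0\}$, transverse to the orbit directions $\im\cP$. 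Second, $P(x)$ is integrable, i.e. $N(P(x))=0$, exactly when the harmonic component of $[P(x),P(x)]$ vanishes. Because $G=\Aut(M,\eta,\xi,\Phi_0)$ commutes with $\delb_b,\cP,\mathscr{G}$ and acts linearly on $\cH^{1}$, uniqueness gives $P(gx)=g\cdot P(x)$, so $S$ is $G$‑equivariant; and for small $x$ the condition $\Id-\ol{P}P>0$ holds, so $S$ indeed maps into $\cK$.

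For the slice property I would gauge‑fix with the aid of Proposition~\ref{prop:GC-orbit}. Flowing a nearby $\Phi$ by $\ccG$ along $u\in\cB^{0}$ and measuring its exit from the slice, the differential at $u=0$ of $u\mapsto\cP^{*}P(\Phi_{u})$ is $\cP^{*}\cP$, whose kernel is $\ker\cP=\Lie(G)\otimes\C$; on the orthogonal complement it is invertible. Thus by the implicit function theorem every integrable $\Phi$ close to $\Phi_0$ may be moved by $\ccG$, uniquely modulo $G^{\C}$, into the slice $\{\cP^{*}=0\}$. The resulting structure is integrable and lies in the slice, hence is a solution of the defining equation with harmonic part $x:=$ its $\cH^{1}$‑component; uniqueness of that solution identifies it with $S(x)$. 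Therefore the $\ccG$‑orbit of every nearby integrable $\Phi$ meets $\im S$.

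The delicate point is the final assertion, and here integrability is indispensable. Writing $G^{\C}$‑orbits through $x$ as generated by $G$ together with the imaginary flows $\exp(\sqrt{-1}\,t\zeta)$, $\zeta\in\Lie(G)$, I would treat the two pieces separately. An element $g\in G\subset\cG$ is a genuine contactomorphism, so $G$‑equivariance gives $S(gx)=g\cdot S(x)$, which lies in the same $\cG$‑, hence $\ccG$‑, orbit. For $\zeta\in\Lie(G)$ one has $\cP_{\Phi_0}(H_{\zeta})=0$, and the extension~(\ref{eq:Lich-com}) produces the $\ccG$‑direction $\Phi\cL_{X_{H_{\zeta}}}\Phi$; I would show that when $S(x)\in\cK^{i}$ the associated $\ccG$‑flow carries $S(x)$ along $S(\exp(\sqrt{-1}\,t\zeta)x)$ modulo $\ccG$. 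The mechanism is that~(\ref{eq:comp-act}) shows $\Phi\cL_{X_{H}}\Phi$ and $\cL_{\Phi X_{H}}\Phi$ differ only by a term acting trivially on basic forms, so for an integrable $S(x)$ the imaginary $G^{\C}$‑generator and the $\ccG$‑direction induce the same motion of the transverse complex structure — precisely the equality exploited in Proposition~\ref{prop:GC-orbit}, which fails off $\cK^{i}$. Matching initial data and invoking uniqueness then place $S(x)$ and $S(x')$ in a single $\ccG$‑orbit. I expect this intertwining of the linear $G^{\C}$‑action with the non‑group $\ccG$ to be the main obstacle: one must track the transverse structure carefully, use integrability at each step, and check that the $\ccG$‑path produced is admissible in the sense of~(\ref{eq:GC-orbit}).
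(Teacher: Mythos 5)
Your proposal is correct and follows essentially the same route as the paper: a Kuranishi-type fixed-point construction of a gauge-fixed ($\cP^{*}=0$) holomorphic slice using the Hodge theory of $\Box_{\Phi_0}$ for the complex $\cB^{\bullet}$, followed by the identification of $G^{\C}$-orbits in the slice with $\ccG$-orbits in $\cK$. The paper simply delegates the details you spell out to Kuranishi's argument and to Lemma 6.1 of Chen--Sun adapted to the transversally holomorphic setting.
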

\begin{proof}
This follows from arguments of M. Kuranishi~\cite{Kur65} adapted to the transversally complex situation and with the complex
$\cB^\bullet$ replacing $\cA^\bullet$.

Recall that $K_{\Phi_0}\subset T_{\Phi_0}\cK =\cB^1$ is an open subset.  Let $U:=K_{\Phi_0}\cap\ker\Box_{\Phi_0}$.
Then the restriction of the map (\ref{eq:chart}) to $U$ gives holomorphic map $\Psi: U\rightarrow\cK$.
We parametrize complex structures by maps $P:T^{1,0}(\Phi_0)\rightarrow T^{0,1}(\Phi_0)$, i.e. $P\in\cB^1$, as in Proposition~\ref{prop:com-chart}.

Then as in~\cite{Kur65} we construct an injective holomorphic map $\vartheta: B\rightarrow\cB^1$ on a $G$ invariant restriction $B\subset U$ so that $\varphi=\vartheta(P)$ is
in $\cK^i$ and satisfies
\[N(\varphi) =\delb_b \varphi +[\varphi,\varphi]=0 \text{  and  } \cP^* \varphi =0 \]
precisely when
\[ \mathcal{H}\bigl([\vartheta(P),\vartheta(P)]\bigr) =0,\]
where $\cH$ is the projection onto the harmonic space.  We define the slice to be $S=\Psi\circ\vartheta: B\rightarrow\cK$.
Thus the structures in $\cK^i$ are parametrized by the vanishing of a holomorphic map $\Theta: B\rightarrow H^2(\cB^\bullet)$,
\[\Theta(P) =\mathcal{H}\bigl([\vartheta(P),\vartheta(P)])\bigr.\]

The remaining properties follow from the arguments in~\cite[Lemma 6.1]{CheSun14} adapted to the transversally holomorphic situation.
\end{proof}
\begin{rmk}
The slice $S$ of the proposition is holomorphic with $B$ and $\cK$ given the $L^{2,\ell}$ topology, i.e. of Banach manifolds.
We will fix a slice for some large $\ell$.
\end{rmk}

\subsection{Reduction to finite dimensional GIT}

Assume now that $(M,\eta,\xi,\Phi_0)$ is a cscS manifold. We consider the problem of finding cscS structures on nearby
$(M,\eta,\xi,\Phi)$.  In order to get a finite dimensional moment map we will perturb the holomorphic slice in
Proposition~\ref{prop:slice}.

\begin{prop}\label{prop:map}
There exists a $G$-equivariant $C^2$ map $\hat{S}$ from a neighborhood $B$
of $0$ in $H^1(\cB^\bullet)$ to $\cK$ with $\hat{S}(0)=\Phi_0$, such that
$\mu\circ\hat{S} =(s^T -s^T_0)\circ \hat{S}$ takes value in $\Lie(G)$.

Furthermore, the $\ccG$ orbit of every integrable smooth $\Phi$ close to $\Phi_0$ intersects the image of $\hat{S}$.
If $x$ and $x'$ lie in the same $G^{\C}$ orbit in $B$ and $\hat{S}(x)\in\cK^{i}$, then $\hat{S}(x)$ and $\hat{S}(x')$ are in the
same $\ccG$ orbit in $\cK$.  Moreover, $\hat{S}$ is tangent to $S$ at 0 to first order.

Here we take $C^2$ to mean that $\hat{S}$ is $C^2$ as a map into $\cK^\ell$, the space of $L^{2,\ell}$ K-contact structures,
for some large $\ell$.  It follows that $\hat{S}$ is $C^2$ as a map from $B$ into the space of $C^m$ K-contact structures, for
$\ell >\frac{n}{2} + m$.
\end{prop}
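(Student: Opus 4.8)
The plan is to perturb the holomorphic slice $S$ from Proposition~\ref{prop:slice} so that the moment map value lands in $\Lie(G)$, exploiting the fact that $\Lie(G)=\ker\cP_{\Phi_0}$ consists precisely of the Killing potentials that are Hamiltonians of holomorphic vector fields stabilizing $\Phi_0$. The conceptual model is the cscK slice construction of Sz\'ekelyhidi and the correction used by Br\"onnle, transported to the transverse K\"ahler setting. First I would linearize the scalar curvature map along the slice. Because $(M,\eta,\xi,\Phi_0)$ is cscS, $\mu(\Phi_0)=s^T-s^T_0=0$, so the issue is purely about the \emph{direction} in which $\mu\circ S$ moves away from $\Phi_0$: a priori $\mu(S(x))$ is an arbitrary element of $C^\infty_b(M)_0$, and I want to modify $S$ to $\hat S$ so that $\mu(\hat S(x))$ lies in the finite-dimensional subspace $\Lie(G)\subset C^\infty_b(M)_0$.

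The key analytic tool is the decomposition of $C^\infty_b(M)_0$ induced by the Lichnerowicz-type operator. Writing $D s^T$ for the linearization of $s^T$ at $\Phi_0$ and recalling from the moment map identity~(\ref{eq:mmap}) that $\langle H, Ds^T(\cP_{\Phi_0}H')\rangle = \Omega_\cK(\cP_{\Phi_0}H,\cP_{\Phi_0}H')$, the composite $Ds^T\circ \cP_{\Phi_0}$ is (up to sign) the fourth-order self-adjoint Lichnerowicz operator whose kernel is exactly $\Lie(G)=\ker\cP_{\Phi_0}$. This gives an $L^2$-orthogonal, $G$-invariant splitting
\begin{equation*}
C^\infty_b(M)_0 = \Lie(G)\oplus\im(Ds^T\circ\cP_{\Phi_0}),
\end{equation*}
with the second summand the image of the elliptic operator on the complement of its kernel. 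The plan is then to look for $\hat S(x)=S(x)$ composed with (the time-one flow of) an infinitesimal $\ccG$-action generated by a potential $H(x)\in C^\infty_b(M)$ lying in the complement of $\Lie(G)$, chosen so that the component of $\mu$ orthogonal to $\Lie(G)$ is killed. Since acting by $\cP_{\Phi_0}H$ changes $s^T$ to first order by $Ds^T(\cP_{\Phi_0}H)$, which surjects onto the complement, the implicit function theorem on the slice produces a unique small $H(x)$, depending $G$-equivariantly and $C^2$-smoothly on $x$, solving
\begin{equation*}
\Pi_{\Lie(G)^\perp}\,\mu(\hat S(x))=0,
\end{equation*}
where $\Pi_{\Lie(G)^\perp}$ is the $L^2$-projection. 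Because the correction is built from the $\ccG$-action and $x\mapsto H(x)$ vanishes to first order at $0$ (as $\mu(S(0))=0$), the new map $\hat S$ stays tangent to $S$ at the origin and retains the slice properties: integrable structures near $\Phi_0$ still meet its image, and points in the same $G^\C$-orbit whose images are integrable remain in one $\ccG$-orbit, since the added $\ccG$-motion does not leave the $\ccG$-orbit.

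The main obstacle is the regularity and uniformity of the implicit function theorem in this ILH / Sobolev setting. The operator $Ds^T\circ\cP_{\Phi_0}$ is only transversely elliptic, so one must work in the $L^{2,\ell}$ category, verify that the projection $\Pi_{\Lie(G)^\perp}$ and the partial inverse of the Lichnerowicz operator are bounded and $G$-equivariant on these Sobolev completions, and confirm that the scalar curvature map is $C^2$ rather than merely continuous as a function of the slice parameter and the correction potential. Here one must be careful that $s^T$ is a second-order expression in $\Phi$, so $\mu\circ\hat S$ lands in $L^{2,\ell-2}$; the stated $C^2$ claim is therefore an assertion about $\hat S$ as a map into $\cK^\ell$, and the Sobolev embedding $L^{2,\ell}\hookrightarrow C^m$ for $\ell>\tfrac{n}{2}+m$ then upgrades $C^2$-dependence into the $C^m$ topology, which is exactly the final sentence of the statement. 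Establishing these mapping properties carefully — and checking that the correction, though defined via the non-existent group $\ccG$, is legitimately realized through the infinitesimal action~(\ref{eq:Lich-com}) and Proposition~\ref{prop:GC-orbit} — is where the real work lies.
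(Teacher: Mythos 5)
Your strategy coincides with the paper's: perturb the Kuranishi slice $S$ along $\ccG$-orbits by a potential $\phi(x)$ produced by the implicit function theorem applied to the component of the transverse scalar curvature orthogonal to $\Lie(G)$, with linearization the fourth-order operator $\cP^*\cP$ whose kernel is $\Lie(G)$; the orbit statements, equivariance and tangency are then inherited from $S$ essentially as you say. The genuine gap is at the step you yourself flag and then defer: the differentiability of the map to which the implicit function theorem is applied. In the formulation you propose --- solve $\Pi_{\Lie(G)^\perp}\,\mu(\hat S(x))=0$ --- the quantity $\mu(\hat S(x))$ is the scalar curvature of $f_1^*\bigl(\eta+d^c\phi,\,S(x)-\xi\otimes d\phi\bigr)$, where $f_1$ is the time-one flow of Proposition~\ref{prop:GC-orbit}. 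For $\phi\in L^{2,k}_b$ the diffeomorphism $f_1$ has only $L^{2,k-2}$ regularity, and pullback by a Sobolev diffeomorphism is not a differentiable operation between the relevant Sobolev spaces (the Ebin--Marsden loss of derivatives), so the map $(x,\phi)\mapsto \Pi_{k-4}\bigl(f_1^*s^T\bigr)$ is not known to be even $C^1$ and the implicit function theorem cannot be invoked as you have set it up. This is not a routine verification one can postpone: it is the reason the proposition needs a nonobvious formulation.

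The paper circumvents the problem by an algebraic reformulation. The condition $f_1^*s^T\in\Lie(G)$ is equivalent to $s^T\in\operatorname{span}\{\psi^*\sigma_j\}$ with $\psi=f_1^{-1}$ and $\{\sigma_j\}$ an orthonormal basis of $\Lie(G)$, so one instead solves
\begin{equation*}
\Pi_{k-4}\bigl(\psi(x,\phi)^*\Pi_{k-4}\bigr)\,s^T(S(x),\phi)=0 .
\end{equation*}
Here $s^T(S(x),\phi)$ is the scalar curvature of the \emph{un-pulled-back} structure $(\eta+d^c\phi,\,S(x)-\xi\otimes d\phi)$, a smooth differential expression in $(x,\phi)$, and the diffeomorphism enters only through the finitely many functions $\psi(x,\phi)^*\sigma_j$, which depend $C^2$ on $(x,\phi)$ because each $\sigma_j$ is a fixed smooth function. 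This conjugation of the finite-rank projection, rather than pullback of the scalar curvature, is what makes the map honestly $C^2$ with derivative $\cP^*\cP$ at the origin; an elliptic bootstrap then shows $\phi(x)\in C^\infty_b(M)$ before one defines $\hat S(x)=\chi(x,\phi(x))^*\bigl(\eta+d^c\phi(x),\,S(x)-\xi\otimes d\phi(x)\bigr)$. Without some such device your argument stalls precisely at the point you label as ``where the real work lies.''
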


\begin{proof}
We will perturb the map $S$ of Proposition~\ref{prop:slice} along $\ccG$-orbits to obtain our new map $\hat{S}$ with the stated
properties.  Let $B\subset H^1(\cB^\bullet)$ be a $G$-invariant neighborhood of zero.
We will identify $\Lie(\cG)$ with the corresponding subspace of basics functions.  Let $W_b^{2,k}$
be the orthogonal complement of $\Lie(G)$ in $L^{2,k}_b(M,\R)$, $\Pi_k:L^{2,k}_b(M,\R)\rightarrow W_b^{2,k}$ the orthogonal projection
on this space, and $U_k$ a small neighborhood of zero inside $W_b^{2,k}$, where we assume $k > \frac{n}{2} +2$.  Given $\phi \in U_k$ and K-contact structure
$\Phi$ with values in $L_b^{2,\ell}$, for large $\ell$, we define $F(\Phi,\phi)$ to be the structure
$f_1^*(\eta_1,\Phi_1)$, where $(\eta_1,\Phi_1)=(\eta+d^c \phi,\Phi -\xi\otimes d\phi)$ and $f_1$ is the diffeomorphism as in
Proposition~\ref{prop:GC-orbit}.  We would like to apply the implicit function theorem to
\begin{equation}
\begin{array}{cccc}
H : & B \times U_k & \rightarrow & U_{k-4} \\
 & (x,\phi)& \mapsto & \Pi_{k-4} (s^T(F(S(x),\phi))),
\end{array}
\end{equation}
which would give the desired perturbed section $\hat{S}$.  But it is not clear that $H$ is differentiable, since $f_1$ is a
$L^{2,k-2}$ diffeomorphism so $F(S(x),\phi)$ has only $L^{2,k-3}$ regularity.  So we must employ a slightly more complicated argument.

Let $\cD$ be the group of diffeomorphism of $M$.  This is an ILH Lie group~\cite{Omo70,Ebi70}, which is the inverse limit of
$\cD^k, k>\frac{n}{2}+2,$ the group of $L^{2,k}$ diffeomorphisms of $M$.
We define
\begin{equation*}\label{eq:diff-map}
\chi_i : B\times U_k \rightarrow\cD^{k-2-i},
\end{equation*}
where $\chi_i (x,\phi)$ is the diffeomorphism $f_1$ of Proposition~\ref{prop:GC-orbit} depending on the Sasakian structure
$(\eta+d^c \phi,S(x) -\xi\otimes d\phi)$.  It is known, see~\cite{EbiMar70}, that $\chi_i$ is a $C^i$ map.
In particular, $\chi:=\chi_2$ is $C^2$.  Also, we have
\begin{equation*}
\psi : B\times U_k \rightarrow\cD^{k-4},
\end{equation*}
defined by $\psi(x,\phi):=\chi(x,\phi)^{-1}$ which can be seen to be $C^2$ as follows.
Recall that $f_1$ is the $t=1$ diffeomorphism of the flow of $V_t =\Phi_t X^t_\phi$ as defined in Proposition
~\ref{prop:GC-orbit}.  Then $\psi(x,\phi)$ is the $t=1$ diffeomorphism of the flow of $-V_{1-t}$, thus $\psi$ is $C^2$.

Let $\Lie(G)$ have an orthonormal basis $\{\sigma_1,\ldots,\sigma_r \}\subset C_b^\infty$.  Then define
\[ \Pi_k =Id -\Pi_k^G ,\]
where $\Pi_k^G : L_b^{2,k} \rightarrow\Lie(G)$ is the orthogonal projection,
\[ \Pi_k^G(f) = \sum_{j=1}^r \sigma_j \langle f,\sigma_j \rangle_{L^2}. \]

Instead of $H$ we consider
\begin{equation}\label{eq:slice-map}
\begin{array}{cccc}
F : & B \times U_k & \rightarrow & U_{k-4} \\
 & (x,\phi)& \mapsto & \Pi_{k-4} \bigl(\psi(x,\phi)^*\Pi_{k-4} \bigr) s^T(S(x),\phi),
\end{array}
\end{equation}
where $s^T(S(x),\phi)$ is the transverse scalar curvature of $(\eta+d^c \phi,S(x) -\xi\otimes d\phi)$.
We have $\psi(x,\phi)^*\Pi_{k-4} = Id - \psi(x,\phi)^* \Pi_{k-4}^G$ and
\begin{equation*}
\begin{split}
\psi(x,\phi)^* \Pi_{k-4}^G (f) & = \psi(x,\phi)^* \circ \Pi_{k-4}^G \circ \chi(x,\phi)^* f \\
                               & = \psi(x,\phi)^* \bigl(\sum_{j=1}^r \sigma_j \langle\chi(x,\phi)^* f,\sigma_j \rangle\bigr) \\
                               & = \sum_{j=1}^r \psi(x,\phi)^* \sigma_j \langle f,\psi(x,\phi)^* \sigma_j \rangle_{\eta +d^c \phi}, \\
\end{split}
\end{equation*}
is easily seen to be $C^2$ on $B\times U_k$.  Because $\sigma_j \in C^\infty_b$ we have that $\psi(x,\phi)^* \sigma_j$ is
a $C^2$ mapping from $U\times U_k$ to $L^{2,k-4}_b$ (cf.~\cite{EbiMar70}).  Therefore $F$ is $C^2$.

The derivative of $F$ with respect to $U_k$ at the origin is
\begin{equation*}
DF_{(0,0)} = \cP^* \cP : W^{2,k} \rightarrow W^{2,k-4},
\end{equation*}
which is an isomorphism for all $k>\frac{n}{2} +2$.  Applying the implicit function theorem for $k\gg 1$,
we get a $C^2$ mapping
\[ U\ni x \mapsto \phi(x)\in L^{2,k}_b(M), \]
so that $F(x,\phi(x))=0$.

After possibly shrinking $B$ this implies that on $B$
\begin{equation}\label{eq:scal-van}
\bigl(\psi(x,\phi(x))^*\Pi_{k-4} \bigr) s^T(S(x),\phi(x))=0.
\end{equation}
Since $\phi(x)\in L^{2,k}_b(M)$ for each $x\in B$, we have $\psi(x,\phi(x))\in\cD^{k-2}$ for all $x\in B$.
From (\ref{eq:scal-van}) we have
\[ s^T(S(x),\phi(x)) \in\im\Bigl[\psi(x,\phi(x))^*\Pi^G \Bigr], \]
where the right hand side is a finite dimensional spaces of $L^{2,k-2}$ function.
Note that on any local transverse chart of the Reeb foliation $\mathscr{F}_\xi$ as in (\ref{eq:fol-chart}) the
map $\phi\mapsto s^T(S(x),\phi)$ is elliptic.  By well known elliptic regularity $\phi(x)\in L^{2,k+2}_b(M)$.  But
then $s^T(S(x),\phi(x))\in L^{2,k}_b(M)$, so elliptic regularity implies $\phi(x)\in L^{2,k+4}_b(M)$.
Continuing, we see that $\phi(x)\in C^\infty_b(M)$ for each $x\in B$.

It follows that
\begin{equation}\label{eq:slice-def}
\hat{S}(x) =(\eta,\Phi(x)) :=\chi(x,\phi(x))^* \bigl(\eta +d^c \phi(x), S(x)-\xi\otimes d\phi(x) \bigr)
\end{equation}
is section of $\cK$, which is $C^2$ as a map from $B$ to $\cK^{\ell}$ for $\ell=k-5$.
Note that at the cost of shrinking $U$, one can choose $\ell$ arbitrarily large.
This is also a $C^2$ map from $U$ to $C_b^m(M)$ for $\ell>\frac{n}{2}+m$ by the Sobolev
embedding theorem.

The statements about the orbits of $\ccG$ follow from the same properties of the slice $S$ of Proposition
~\ref{prop:slice} since $\hat{S}$ is constructed by deforming $S$ along $\ccG$ orbits.

The $G$-equivariance of $\hat{S}$ can be easily checked from that of $S$ and the other maps involved.
Suppose $A\in T_{\Phi_0}\cK$ is tangent to $\hat{S}$.  We have
\[ \langle ds^T (A), H\rangle = g_{\cK}(\Phi_0 \cP_{\Phi_0}(H), A)=0,\quad\text{for all } H\in C_b^\ell ,\]
since $ds^T (A)\in\Lie(G)$ and $\cP_{\Phi_0}(H) =0$ for $H\in\Lie(G)$.
Thus $A$ is orthogonal to $\im(\Phi_0 \cP_{\Phi_0})$ and one can check by differentiating (\ref{eq:slice-def})
that the component of $\im d\hat{S}(0)$ not tangent to $S$ is in $\im(\Phi_0 \cP_{\Phi_0})$.
\end{proof}

Let $U\subset H^1(\cB^\bullet)$ be a G-invariant neighborhood as above, then
the pullback of the moment map $\mu= s^T -s^T_0$ to $U$ by $\hat{S}$ is a moment map for the $G$-action on $U$ with
respect to the pullback of the symplectic form $\Omega_{\cK}$ on $\cK$ by $\hat{S}$, denoted $\Omega$.
Note that the pulled back moment map $\mu$ on $U$ is $C^2$ and $\Omega$ is $C^1$.
We have reduced the problem of finding zeros of the moment map $\mu$ to
a finite dimensional Hamiltonian system $(U,G,\Omega,\mu)$.  But note that this problem is slightly complicated by the
fact that $\Omega$ is not K\"{a}hler.

\begin{prop}\label{prop:finiteGIT}
Suppose that $x\in U$, after possibly shrinking $U$, is polystable for the $G^{\C}$-action on $H^1(\cB^\bullet)$.
Then there is $y$ in the $G^{\C}$-orbit of $x$ such that
$s^T(\hat{S}(y))-s^T_0=0$. If in addition $\hat{S}(x)$ is integrable, then the corresponding cscS manifold $(M,\eta,\hat{S}(y))$ is
a deformation of $(M,\eta,\hat{S}(x))$ as in (\ref{eq:trans-def2}).
\end{prop}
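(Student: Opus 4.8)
The statement is a local, finite-dimensional Kempf--Ness theorem for the Hamiltonian system $(U,G,\Omega,\mu)$ constructed above, the one novelty being that $\Omega$ is only a $C^{1}$ symplectic form and is not a priori K\"{a}hler. The plan is to treat the two assertions in turn: first the existence of a point $y$ in the $G^{\C}$-orbit of $x$ with $\mu(\hat S(y))=s^T(\hat S(y))-s^T_0=0$, which is pure finite-dimensional geometric invariant theory and needs no integrability (here $s^T$ is the Hermitian scalar curvature); and then, when $\hat S(x)\in\cK^i$, the identification of $\hat S(y)$ with a deformation of $\hat S(x)$ of the form \eqref{eq:trans-def2}.

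For the first part the key preliminary step is to recover enough K\"{a}hler structure near the origin. Because $\hat S$ is tangent to the holomorphic slice $S$ to first order at $0$ (Proposition~\ref{prop:map}) and $g_{\cK}$ is Hermitian for $\cJ$, the differential $d\hat S(0)$ is $\cJ$-complex linear, so $\Omega$ agrees at $0$ with the flat $G$-invariant K\"{a}hler form $\Omega_0$ on the complex vector space $H^1(\cB^\bullet)$, and the symmetric tensor $g:=\Omega(\cdot,\cJ\cdot)$ is positive definite at $0$. After shrinking $U$ I may assume $g>0$ on all of $U$; then $(U,\cJ,\Omega,\mu)$ is a genuine K\"{a}hler Hamiltonian $G$-space on which $G^{\C}$ acts holomorphically (linearly). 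Writing $X_\sigma$ for the field generated by $\sigma\in\Lie(G)$ and using $\cJ X_\sigma=X_{\i\sigma}$ together with the moment map identity $d\langle\mu,\sigma\rangle=\iota_{X_\sigma}\Omega$, one obtains the infinitesimal convexity
\[ \frac{d}{dt}\big\langle\mu\big(\exp(\i t\sigma)\cdot z\big),\sigma\big\rangle=\Omega\big(X_\sigma,\cJ X_\sigma\big)=g\big(X_\sigma,X_\sigma\big)\ge 0 \]
that underlies the whole method.

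I would then run the standard argument. Polystability of $x$ for the linear $G^{\C}$-action means its orbit is closed with reductive stabilizer; integrating the inequality above yields the Kempf--Ness (log-norm) functional on the orbit $G^{\C}\cdot x$, which is convex along the paths $t\mapsto\exp(\i t\sigma)\cdot z$ and, by polystability, proper. Hence it attains a minimum at some $y=g\cdot x$, and a minimum is precisely a critical point, i.e.\ a zero of $\mu$, giving $s^T(\hat S(y))-s^T_0=0$; equivalently one may follow the negative gradient flow $\dot z=-\cJ X_{\mu(z)}$ of $|\mu|^2$, which stays in the $G^{\C}$-orbit, decreases $|\mu|^2$, and converges to a zero of $\mu$ because the orbit is closed. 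For the second assertion, suppose in addition $\hat S(x)\in\cK^i$. Since $y$ lies in the $G^{\C}$-orbit of $x$, the last statement of Proposition~\ref{prop:map} shows that $\hat S(y)$ and $\hat S(x)$ lie in the same $\ccG$-orbit in $\cK$; by Proposition~\ref{prop:GC-orbit} this orbit consists, up to a diffeomorphism preserving $\mathscr{F}_\xi$, exactly of the structures $(\eta_\phi,\xi,\Phi_\phi,g_\phi)$ of \eqref{eq:trans-def2}. In particular $\hat S(y)$ is again integrable, hence Sasakian, it is a deformation of $\hat S(x)$ in the stated sense, and $\mu(\hat S(y))=0$ makes it cscS.

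The hard part is the confinement in the first step: the convexity and properness machinery only works where $g=\Omega(\cdot,\cJ\cdot)>0$, which after shrinking holds on $U$ but need not hold along the entire (possibly large) $G^{\C}$-orbit, while $\mu$ and $\Omega$ are only defined on $U$. The delicate point is therefore to keep the minimizing sequence (equivalently the gradient flow) inside $U$. This is where one must use that the relevant deformations $\Phi$ are near $\Phi_0$, so that $x$ may be taken small, together with the fact that for the flat model the norm-minimizer in the orbit has norm no larger than $|x|$; closedness of the polystable orbit then keeps the flow near $0$ and inside the neighborhood on which the perturbed moment map lives.
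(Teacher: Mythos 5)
Your overall framing is right, and your treatment of the second assertion (integrability via the last statements of Proposition~\ref{prop:map} and Proposition~\ref{prop:GC-orbit}) is fine. But the first and main assertion has a genuine gap, and it is exactly the one you flag at the end without closing. You propose to run Kempf--Ness (or the gradient flow of $\|\mu\|^2$) directly for the perturbed structure $(U,\Omega,\mu)$. The log-norm functional and the flow for $\mu$ are only defined where $\mu$ and $\Omega$ are, namely on $U$; polystability of $x$ for the \emph{linear} action gives properness of the Kempf--Ness functional for the \emph{flat} moment map $\nu$, not for $\mu$, and the norm bound on the flat minimizer controls nothing about the trajectory of the perturbed flow. ``Closedness of the polystable orbit keeps the flow near $0$'' is not an argument: the orbit is closed in $H^1(\cB^\bullet)$, but the flow could still exit $U$ before reaching a zero. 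Since confinement is the entire difficulty here, the proof as written does not go through. A secondary gap: a critical point of $\|\mu\|^2$ on the orbit only gives $\sigma_x\mu(x)=0$, i.e.\ $\mu(x)\in\mathfrak{k}_x$; you need the additional observation (proved in the paper by differentiating $\langle\mu(tx),\xi\rangle$ for $\xi\in\mathfrak{k}_x$) that $\mu(x)\in\mathfrak{k}_x^{\perp}$ for all $x\in U$ before you can conclude $\mu(x)=0$.

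The paper avoids your confinement problem entirely by arguing perturbatively off the flat model rather than directly. It first applies the classical Kempf--Ness theorem to the \emph{linear} Kähler structure $(\Omega_0,J)$ with moment map $\nu$, obtaining a zero $x_0$ of $\nu$ in $G^{\C}\cdot x$ which lies in $U$ because it minimizes the norm. It then establishes the quantitative comparison $\mu(tx)-\nu(tx)=o(t^2)$ uniformly on $U$ (Taylor expansion, using $\mu(0)=0$, $d\mu_0=0$, and $\tfrac{d^2}{dt^2}\mu(tx)|_{t=0}=2\nu(x)$), computes that the Hessian of $h=\|\nu\|^2$ is positive definite on a $G$-slice $R$ of the orbit through $x_0$, and uses an annulus-trapping argument ($\|\mu\|\le\tfrac32\epsilon t^2$ on a small ball, $\|\mu\|\ge\tfrac52\epsilon t^2$ on a surrounding annulus) to force $f=\|\mu\|^2$ to have an \emph{interior} minimum on the slice through $tx_0$. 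At that interior minimum the first-order condition, the comparison $\Omega(X,JX)\ge\tfrac12\Omega_0(X,JX)$, and $\mu\in\mathfrak{k}_x^{\perp}$ yield $\mu=0$. If you want to salvage your approach you would have to supply a genuine a priori estimate keeping the flow in $U$; the perturbative route is the standard way (following Sz\'ekelyhidi and Br\"onnle) to sidestep that.
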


\begin{proof}
Let $\Omega$ be the restriction of the symplectic form $\Omega_{\cK}$ of $\cK$ to $U\subset H^1(\cB^\bullet)$ via
$\hat{S}$.  Thus the restriction of the moment map $\mu: U \rightarrow\Lie(G)$ is a moment map for $G$.
Also, let $\Omega_0$ be the linear symplectic form on $H^1(\cB^\bullet)$ induced by $\Omega$ at the origin.
And write
\begin{equation*}
\nu : H^1(\cB^\bullet)\rightarrow\Lie(G)
\end{equation*}
for the corresponding moment map for the flat K\"{a}hler structure $(\Omega_0,J)$, where $J$ is the vector space
complex structure on $ H^1(\cB^\bullet)$.  Let $\sigma_x :\Lie(G)\rightarrow T_x U$ be the infinitesimal action,
then
\[ \langle\nu(x),\xi\rangle =\frac{1}{2}\Omega_0(\sigma_x \xi,x),\quad\text{for }\xi\in\Lie(G).\]

We have $\mu(0)=0$ by assumption, and $d\mu_0 =0$ because $0\in U$ is a fixed point of the $G$-action.
It is also routine to check that
\[ \frac{d^2}{dt^2} \mu(tx)|_{t=0} =2\nu(x).\]
Therefore by Taylor's theorem we have
\begin{equation}\label{eq:moment-asym}
\mu(tx)-\nu(tx) = o(t^2),
\end{equation}
with $\underset{t\rightarrow 0}{\lim}\frac{|\mu(tx)-\nu(tx)|}{t^2} =0$ converging to $0$ uniformly in $x\in U$.
Given $\delta >0$ and $C>0$ we may shrink $U$ so that
$Ct^{-2}|\mu(tx)-\nu(tx)| <\delta$.

If $x\in U$ is polystable for the $G^{\C}$-action, then by the Kempf-Ness theorem there is a zero $x_0$ of $\nu$
in the $G^{\C}$ orbit of $x$.  Since this is given by minimizing the norm over $G^{\C}\cdot x$, the zero
will be in $U$.

For $x\in U\subset H^1(\mathcal{B}^\bullet)$ let $K_x \subset G$ the stabilizer of $x$ and $\mathfrak{k}_x$ its Lie algebra.  We
have $K_{tx}=K_x$.  Then for $\xi\in\mathfrak{k}_x$
\[ \frac{d}{dt}\langle\mu(tx),\xi\rangle =\Omega_{tx}(\sigma_{tx}(\xi),x)=0. \]
Thus for all $x\in U$ we have $\mu(x)\in\mathfrak{k}_x^{\perp}$.

We also shrink $U$ so that $\Omega(X, JX)\geq\frac{1}{2}\Omega_0(X,JX),\ \forall X\in TU$.

Define functions $h=\|\nu\|^2$ and $f=\|\mu\|^2$ on $U$, where the norms are given by invariant metrics on $\Lie(G)$.
Note that $f$ is the Calabi functional.
We have $dh_x(w) =2\Omega_0(\sigma_x \nu(x),w),\ w\in T_x U$.

Let $x_0$ be a point with $\nu(x_0)=0$.  We consider the restriction of $f$ and $h$ to the orbit $G^{\C}\cdot x_0$.
Note that both $f$ and $h$ are invariant under $G$.
Let $R\subset G^{\C}\cdot x_0$ be a local slice through $x_0$ for the $G$ action on $G^{\C}\cdot x_0$ invariant under
$K_{x_0}$.  Clearly $dh_{x_0} =0$.  The Hessian at of $h$ at $x_0$ is
\begin{equation}
d^2_{x_0}h(v,w) = 2\sum_{j=1}^m \Omega_0(\sigma_{x_0} e_j,v)\Omega_0(\sigma_{x_0} e_j,w),
\end{equation}
where $\{e_1,\ldots, e_m \}$ is an orthonormal basis of $\mathfrak{k}^{\perp}_{x_0}$.  If $v\in T_{x_0}R$ then
$v=J\sigma_{x_0} \eta$ for $\eta\in\mathfrak{k}^{\perp}_{x_0}$.  And
\[ d^2_{x_0} h (v,v) =\sum_{j=1}^m g_0(\sigma_{x_0} e_j ,\sigma_{x_0}\eta)^2 >0, \]
for $v\neq 0$.

There exists constants $\epsilon>0$ and $\delta_2 >\delta_1>0$ so that $h<\epsilon^2$ on $B_{\delta_1}(x_0)$,
ball of radius $\delta_1$ in $R$,
while $h>9\epsilon^2$ on $A=B_{2\delta_2}(x_0) \setminus B_{\delta_2}(x_0)$.
We will identify the slice $R$ and the above neighborhood in the obvious way when $x_0$ is replaced with $tx_0,\ 0<t<1$.
By shrinking $U$ we may assume $\|\mu(tx) -\nu(tx)\|\leq\frac{1}{2}\epsilon t^2$, which implies that
\[ \|\mu(tx)\| \geq \frac{5}{2}\epsilon t^2\quad\text{on }A \]
and
\[ \|\mu(tx)\| \leq\frac{3}{2}\epsilon t^2\quad\text{on }B_{\delta_1}.\]
So on the slice $R$ through $tx_0$ there exists a minimum of $f$ at $x\in B_{\delta_2}$.
Since $0=df_x (v)=2\Omega(\sigma_x \mu(x),v)$ on $G^{\C}\cdot tx_0$ we have
\[\begin{split}
0 = 2\Omega(\sigma_x \mu(x),J\sigma_x \mu(x)) & \geq \Omega_0(\sigma_x \mu(x),J\sigma_x \mu(x)) \\
                                              & =g_0(\sigma_x \mu(x),\sigma_x \mu(x)).\\
\end{split}\]
Thus $\mu(x)\in\mathfrak{k}_x$ and $\mu(x)=0$ since we have $\mu(x)\in\mathfrak{k}_x^\perp$.

\end{proof}

This proposition gives an algebraic, finite dimensional way to obtain deformations of cscS metrics.

\begin{rmk}
In the K\"{a}hler case, i.e. $(M,\eta,\xi,\Phi_0)$ is a regular Sasakian manifold, gives the local moduli of
csc metrics.  The proposition proves that locally every polystable orbit contains a csc metric.  The results of X. Chen and
S. Sun~\cite{CheSun14} show the necessity and uniqueness.  More precisely, they show that only polystable orbits contain csc metrics
and they are unique up to the action of $\cG$.  Thus the local moduli is given by a neighborhood of zero in the GIT quotient
$H^1(\cB^\bullet){/\!/}\ccG$.  The necessity of polystability and uniqueness in the csc Sasakian case is still open.
\end{rmk}

\begin{rmk}
 All these results extends to the extremal setting, using relative versions of
 group actions containing the extremal vector field. See \cite{RolTip12}.
\end{rmk}

\subsection{Deformations and K-stability}

In this section, we show that if a Sasakian manifold is obtained by a small deformation of the transverse complex structure of a cscS
manifold, then $K$-polystability is a sufficient condition to admit a cscS metric.  We also show that a small deformation of
a cscS metric is K-semistable.

\begin{thm}\label{thm:csc-def}
Let $(M,\eta,\xi,\Phi_0)$ be a cscS manifold and $(M,\eta,\xi,\Phi)$ a nearby Sasakian manifold with
transverse complex structure $\bar J$. Then if $(M,\eta,\xi,\Phi)$ is K-polystable, there is a constant scalar curvature Sasakian structure in the space $\cS(\xi, \bar J)$.
\end{thm}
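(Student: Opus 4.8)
The plan is to reduce the statement to the finite-dimensional geometric invariant theory problem set up in Propositions~\ref{prop:slice}--\ref{prop:finiteGIT}, and then to identify the abstract $G^{\C}$-polystability on $H^1(\cB^\bullet)$ with the K-polystability hypothesis. The crux is a weight-matching argument relating the Hilbert--Mumford numerical invariant of a one-parameter subgroup of $G^{\C}$ to the Donaldson--Futaki invariant of an associated test configuration, with Lemma~\ref{lem:smooth} providing the bridge.

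First I would locate the point in the slice corresponding to $\Phi$. By Proposition~\ref{prop:map} the $\ccG$-orbit of the integrable structure $\Phi$ meets the image of $\hat{S}$, so there is an $x$ in the $G$-invariant neighborhood $B\subset H^1(\cB^\bullet)$ with $\hat{S}(x)\in\cK^i$ lying in the $\ccG$-orbit of $\Phi$. Since K-(poly)stability depends only on the polarized affine cone $(Y,\xi)$, hence only on the transverse complex structure $\ol{J}$ and therefore only on the $\ccG$-orbit (Proposition~\ref{prop:GC-orbit}), the structure $\hat{S}(x)$ is K-polystable exactly when $\Phi$ is.

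The main step, and the principal obstacle, is to show that K-polystability of $\hat{S}(x)$ forces $x$ to be polystable for the linear $G^{\C}$-action on $H^1(\cB^\bullet)$. By the Hilbert--Mumford criterion it suffices to test one-parameter subgroups $\lambda:\C^{*}\to G^{\C}$. Each such $\lambda$ degenerates $x$ to a fixed point $x_0=\lim_{t\to 0}\lambda(t)\cdot x$; under the slice this realizes $\hat{S}(x_0)$ as a degeneration of the transverse complex structure, and taking the affine cone produces a $T$-equivariant test configuration of $Y$ with central fibre $Y_0=C(\hat{S}(x_0))$ and $\C^{*}$-generator $\upsilon$ induced by $\lambda$. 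The Kempf--Ness and moment-map description identifies the numerical weight of $\lambda$ at $x$ with the value of the linearized moment map $\nu$ at $x_0$ paired against the generator of $\lambda$; since $\mu=s^{T}-s^{T}_0$ is the moment map, this pairing is, up to a positive constant, the transversal Futaki invariant $\cF_{\xi,\Phi_0}(\upsilon_{M})$, which by Lemma~\ref{lem:smooth} equals $c(n)^{-1}\Fut(Y_0,\xi,\upsilon)$. K-polystability then gives that this weight is nonnegative and vanishes only when the test configuration is a product, i.e. only when $x_0$ lies in the $G^{\C}$-orbit of $x$. This is precisely polystability of $x$. I expect the delicate points to be checking that the degeneration of $\hat{S}(x)$ produced by $\lambda$ genuinely arises from an algebraic $\C^{*}$-action on the cone, so that Lemma~\ref{lem:smooth} applies to a \emph{smooth} central fibre, and carefully tracking the sign and the positive constant relating the Hilbert--Mumford weight to $\cF_{\xi,\Phi_0}$.

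Finally I would invoke Proposition~\ref{prop:finiteGIT}: polystability of $x$ yields $y$ in the $G^{\C}$-orbit of $x$ with $s^{T}(\hat{S}(y))-s^{T}_0=0$, so $\hat{S}(y)$ is cscS. By the orbit correspondence in Propositions~\ref{prop:slice} and~\ref{prop:map}, valid because $\hat{S}(x)$ is integrable, the structure $\hat{S}(y)$ lies in the same $\ccG$-orbit as $\hat{S}(x)$, hence as $\Phi$. By Proposition~\ref{prop:GC-orbit} and the following remark this $\ccG$-orbit, together with the harmonic and exact contributions in~(\ref{eq:cont-decom}) that do not affect the transverse K\"{a}hler structure, is exactly $\cS(\xi,\ol{J})$. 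Therefore the constant scalar curvature Sasakian structure $\hat{S}(y)$ belongs to $\cS(\xi,\ol{J})$, as required.
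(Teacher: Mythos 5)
Your overall skeleton matches the paper's --- reduce to the finite-dimensional problem via $\hat{S}$, invoke Proposition~\ref{prop:finiteGIT}, and use Lemma~\ref{lem:smooth} to pass between the transversal Futaki invariant and the Donaldson--Futaki invariant --- but the central step, that K-polystability of $(M,\eta,\xi,\Phi)$ forces polystability of $x$ for the $G^{\C}$-action on $H^1(\cB^\bullet)$, is argued incorrectly. Your weight-matching identity fails: if $x_0=\lim_{t\to 0}\lambda(t)\cdot x$ exists then $x_0$ is fixed by $\lambda$, so $\sigma_{x_0}a=0$ for the generator $a$ of $\lambda$ and hence $\langle\nu(x_0),a\rangle=\tfrac{1}{2}\Omega_0(\sigma_{x_0}a,x_0)=0$ identically. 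This pairing therefore cannot compute the Futaki invariant of a general central fibre, which is $-\langle\mu(x_0),a\rangle$ for the nonlinear pulled-back moment map $\mu$ and need not vanish; the difference $\mu-\nu$ is $o(|x|^2)$ but it is exactly what carries the information. Moreover, even with the correct pairing the logic does not close: K-polystability gives only the one-sided bound $\Fut(Y_0,\xi,\upsilon)\leq 0$ for each test configuration, so for a general one-parameter subgroup you cannot conclude that the weight vanishes, and hence cannot invoke the ``equality only for products'' clause to deduce that every limit point $x_0$ lies in $G^{\C}\cdot x$, which is what polystability means for a linear action on a vector space.

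The paper instead proves the contrapositive using a single well-chosen one-parameter subgroup: if $x$ is not polystable, take $\rho:\C^*\to G^{\C}$ with $\lim_{\lambda\to 0}\rho(\lambda)\cdot x=x'$ polystable and outside $G^{\C}\cdot x$ (the degeneration to the closed orbit in the orbit closure). The associated test configuration then has central fibre admitting a cscS structure by Proposition~\ref{prop:finiteGIT}, so by Lemma~\ref{lem:smooth} its Donaldson--Futaki invariant is exactly zero, while the configuration is not a product because the stabilizer of $x'$ is strictly larger than that of $x$; this contradicts K-polystability with no weight computation at all. You correctly flagged, but did not carry out, the other nontrivial verification: that the holomorphic family $S(\rho(\lambda)\cdot x)$ over the disk genuinely defines a $T$-equivariant test configuration in the sense of Definition~\ref{defn:test-conf}. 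The paper does this by quotienting by a quasi-regular Reeb subgroup and embedding the resulting orbifold family in a weighted projective bundle over $\Delta$ to extract the generators $\{f_i\}$ and weights $\{w_i\}$.
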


\begin{proof}
Consider the map $\hat{S}$ from Proposition~\ref{prop:map}.
If $\Phi$ is close enough to $\Phi_0$, then up to the action of $\ccG$ there is $x\in U\subset H^1(\cB^\bullet)$
such that $\hat{S}(x)=\Phi$.
Assume for the moment that the K-polystability of $(M,\eta,\Phi)$ implies the polystability of $x$ under the action of $G^{\C}$ in $U$,
where $G$ is the stabilizer of $\Phi_0$ in $\cK$ under the $\cG$ action.  Then by proposition~\ref{prop:finiteGIT} the result follows.
What remain to be shown is that if $x$ is not polystable in $U$, then $(M,\eta,\Phi)$ is not K-polystable.

Consider a one parameter subgroup
\[ \rho: \C^* \rightarrow G^{\C} \]
such that
\[ \exists \lim_{\lambda\rightarrow 0}\rho(\lambda)\cdot x=x'\in U \]
with $x'$ polystable.  Note that we can assume $\rho(S^1)\subset G$, the stabilizer of $\Phi_0$.
We will build a destabilizing test configuration for $(M,\eta,\Phi)$ from this one parameter subgroup.

Note that the polarized affine cones defined by $(M,\xi,\hat{S}(x))$ and $(M,\xi,S(x))$ are biholomorphic.
In fact by the construction of $\hat{S}$ the Sasakian structures differ as in Proposition~\ref{prop:GC-orbit}.
So it is enough to construct a destabilizing test configuration for $(M,\xi,S(x))$.

We have a holomorphic map
\[ F:\Delta \rightarrow \cK^i,\]
where $\Delta\subset\C$ is the unit disk, $F(\lambda)=S(\rho(\lambda)\cdot x)$ for $\lambda\in\Delta\setminus\{0\}$, and $F(0)=S(x')$.  Our test configuration will be $\cY=Y\times\Delta$ as a smooth
manifold with $Y$, the cone for $S(x)$.  The complex structure on $Y\times\{t\}$ is given by $F(t)$, and the rest of
the complex structure is given by the holomorphic map $F$.  The $S^1$-action on $\cY$ is given by
\[ \rho(\tau)(y,t)=(\rho(\tau)y,\tau t),\]\
and extends to a holomorphic $\C^*$-action.
Furthermore, the real torus $T$ generated by $\xi$ acts fiber-wise on
$\cY$.

To see this gives a test configuration as in Definition~\ref{defn:test-conf} choose a subgroup $S^1 \subset T$ generated by
$\zeta$ in the Reeb cone.  Then quotienting by the $\C^*$ generated by this action gives an orbifold test configuration
$(\XX,\cL)$ where $\cL$ is a positive orbifold bundle, with the $\C^*$-action generated by $\rho$.
As in~\cite{RosTho11} we can embed $\XX$ in a weighted projective bundle $\PP(\oplus_{i} (\pi_* \cL^{w_i})^*)$ over $\Delta$,
for large enough $w_i \in\N$, where the summands are preserved by $T$.
Then the $\rho$ action on $(\XX_0,\cL_0)$ induces a $\C^*$-action on
$V=\oplus_i H^0(\XX_0,\cL^{w_i})$.  This induces a diagonal action on
$V\times\C$ inducing $\rho$ on $\cY\subset V\times\C$.
Then the $\{f_i\}$ come from picking $T$-homogeneous elements of the summands of $V$, and the $\{w_i\}$ are the weights of
$\rho$ on these elements.  Thus we have $\TT$-equivariant test configuration $\cY$ for $(Y_1,\xi)$ with central fiber $Y_0$.

Since $x'$ is polystable, $Y_0=\hat{S}(x')$ has a cscS structure in its $G^{G}$ orbit from Proposition~\ref{prop:finiteGIT}.
From Lemma~\ref{lem:smooth}, the Donaldson-Futaki invariant of this test configuration is the usual transversal Futaki invariant which vanishes because $Y_0$ admits a cscS structure.  Moreover, the stabilizer of $\Phi'=S(x')$ in $\cG$ is strictly greater
than that of $\Phi$ because $x'$ is in the closure of the orbit of $x$. Then the automorphism group of $Y_1$ is smaller than the automorphism group of $Y_0$ and this test configuration is not a product. Thus $(M,\eta,\Phi)$ is not
$K$-polystable, which ends the proof.
\end{proof}

\begin{thm}\label{thm:K-semi-def}
Let $(M,\eta,\xi,\Phi_0)$ be a cscS manifold.  Then any small deformation $(M,\eta,\xi,\Phi)$ which is Sasakian is K-semistable.
\end{thm}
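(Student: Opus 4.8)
The plan is to run the same reduction to finite--dimensional GIT used in the proof of Theorem~\ref{thm:csc-def}, and then to show that \emph{every} test configuration has nonpositive Donaldson--Futaki invariant by relating it to the moment--map weight at the polystable origin. First I would invoke Proposition~\ref{prop:map}: after acting by $\ccG$, the nearby Sasakian structure is $\hat{S}(x)$ for a unique $x$ in a small $G$--invariant neighborhood $U\subset H^1(\cB^\bullet)$ of $0$, where $G=\Aut(M,\eta,\xi,\Phi_0)$. Since $\Phi_0$ is cscS, the origin satisfies $\mu(\hat{S}(0))=s^T(\Phi_0)-s^T_0=0$, so $0$ is a zero of the moment map and hence a $G^{\C}$--polystable fixed point. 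This is the structural input on which the whole argument rests.

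Next, given any $\TT$--equivariant test configuration for the polarized cone of $\hat{S}(x)$, I would extract a one--parameter subgroup $\rho:\C^*\to G^{\C}$ exactly as in the proof of Theorem~\ref{thm:csc-def}: choose $\zeta$ in the Reeb cone, pass to the orbifold quotient, and embed in a weighted projective bundle so that the weight data $\{f_i\},\{w_i\}$ arise from a $\C^*$--action on $H^1(\cB^\bullet)$; we may assume $\rho(S^1)\subset G$ with generator $\zeta\in\Lie(G)$. The central fiber is $\hat{S}(x_0)$ with $x_0=\lim_{\lambda\to0}\rho(\lambda)\cdot x$, a $\rho$--fixed point of $U$, and for $x$ close to $0$ this central fiber is again a smooth Sasakian manifold, so Lemma~\ref{lem:smooth} applies. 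Using $\cF_{\xi,\Phi}(X)=-\langle \mu(\Phi),H_X\rangle$ together with the identification $\Lie(G)\cong C^\infty_b(M)$, this gives
\[ \Fut(Y_0,\xi,\rho)=c(n)\,\cF_{\xi,\hat{S}(x_0)}(\zeta)=-c(n)\,\langle \mu(\hat{S}(x_0)),\zeta\rangle,\qquad c(n)>0. \]
Thus K--semistability is equivalent to the nonnegativity of the moment--map weight $\langle\mu(\hat{S}(x_0)),\zeta\rangle$ for every such $\rho$.

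I would then establish this nonnegativity by the finite--dimensional Kempf--Ness comparison already set up in Proposition~\ref{prop:finiteGIT}. If $x_0=0$, the weight equals $\langle\mu(\Phi_0),\zeta\rangle=0$ and $\Fut=0$. Otherwise, working on the $\rho$--fixed locus through $0$ and exploiting that $\mu$ is $C^2$ and $C^2$--close to the flat quadratic moment map $\nu$, with $d\mu_0=0$ and $\tfrac{d^2}{dt^2}\mu(tx_0)|_{0}=2\nu(x_0)$, one compares the genuine weight of $\rho$ at $x$ with its flat counterpart. Because $0$ is polystable and $x$ lies in an arbitrarily small neighborhood, the mere existence of the limit $x_0$ forces $x$ to have only nonnegative $\rho$--weights, and the Kempf--Ness estimates of Proposition~\ref{prop:finiteGIT} (applied to the Calabi functional $\|\mu\|^2$ along the $G^{\C}$--orbit) then yield $\langle\mu(\hat{S}(x_0)),\zeta\rangle\ge0$, hence $\Fut\le0$. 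This is precisely the argument of Br\"onnle and Sz\'ekelyhidi in the cscK setting, transported to the transversal complex structure.

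The main obstacle is exactly this last step. Since the flat model $\nu$ vanishes on the null direction $\zeta$ at the central fiber, the weight $\langle\nu(x_0),\zeta\rangle$ carries no information, and the sign of $\Fut$ is governed entirely by the higher--order difference $\mu-\nu$; controlling it uniformly is what requires both the quantitative estimates $\|\mu(tx)-\nu(tx)\|=o(t^2)$ on $U$ and the polystability of the base point. Two further technical points I would verify along the way are the reduction of an \emph{arbitrary} test configuration to a one--parameter subgroup of $G^{\C}$ (via the weighted projective embedding, as in Theorem~\ref{thm:csc-def}), and the smoothness of the central fiber $\hat{S}(x_0)$, which is needed in order to invoke Lemma~\ref{lem:smooth} rather than its singular generalization from \cite{ColSze12}.
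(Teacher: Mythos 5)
Your overall strategy --- reduce everything to the finite-dimensional picture and control the Donaldson--Futaki invariant through the moment-map weight at the central fiber --- breaks down at precisely the step you flag as a ``technical point to verify'': the reduction of an \emph{arbitrary} test configuration for the polarized cone $(Y,\xi)$ to a one-parameter subgroup of $G^{\C}$ acting on $H^1(\cB^\bullet)$. In the proof of Theorem~\ref{thm:csc-def} the construction runs in the opposite direction: one \emph{starts} with a destabilizing one-parameter subgroup $\rho$ in $G^{\C}$ (whose existence is guaranteed by the Hilbert--Mumford criterion when $x$ is not polystable) and \emph{builds} a test configuration from it. There is no converse. A general $T$-equivariant test configuration in the sense of Definition~\ref{defn:test-conf} is just a choice of homogeneous generators and integer weights, and its flat limit $Y_0$ can be a badly singular affine scheme that does not lie in the Kuranishi family of small deformations of $\Phi_0$ at all; such configurations are invisible to any $\C^*$-action on $H^1(\cB^\bullet)$, and your weight computation (together with the appeal to Lemma~\ref{lem:smooth}, which needs a smooth central fiber) says nothing about them. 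Proving $\Fut\le 0$ only for test configurations arising from $G^{\C}$ falls well short of K-semistability.

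The paper avoids this problem entirely by invoking the Calabi-functional lower bound (\ref{eq:Cal-bound}) of Donaldson and Collins--Sz\'ekelyhidi, which holds for \emph{every} test configuration of $(Y,\xi)$. The finite-dimensional GIT picture is then used only once, for a single specially constructed test configuration: if $x$ is not polystable one degenerates to a polystable $x'$, obtains a cscS structure on the central fiber via Proposition~\ref{prop:finiteGIT}, and transports its K\"ahler potential $\phi$ across the family (trivialized by Ehresmann's theorem, $T$-equivariantly) to produce metrics $\tilde g_z\in\cS(\xi,\bar J)$ with $\Cal(\tilde g_z)\to 0$. Hence $\inf_{\cS(\xi,\bar J)}\Cal=0$, and (\ref{eq:Cal-bound}) forces $\Fut(Y_0,\xi,\upsilon)\le 0$ for all test configurations simultaneously. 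To salvage your route you would need to show that every destabilizing test configuration is dominated by one coming from $G^{\C}$; no such statement is available, and the Calabi-functional inequality is exactly the tool that replaces it.
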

\begin{proof}
In~\cite{ColSze12} the following inequality of Donaldson~\cite{Don05} is proved for a Sasakian manifold $(M,\eta,\xi,\Phi)$ with
polarized cone $(Y,\xi)$.  For any test configuration of $(Y,\xi)$ we have
\begin{equation}\label{eq:Cal-bound}
\inf_{g\in\cS(\xi, \bar J)}(\Cal(g))^{\frac{1}{2}}\|\upsilon\|_{\xi} \geq c(n)\Fut(Y_0,\xi,\upsilon)
\end{equation}
where $c(n)>0$ is a constant that only depends on the dimension n.  See~\cite{ColSze12} for the definition of the norm
$\|\upsilon\|_{\xi}$.

We may assume that $\Phi =\hat{S}(x)$ for some $x\in U$.  If $x$ is in a polystable orbit then it admits a csc representative
in $\cS(\xi, \bar J)$ and is thus K-semistable.  Thus we may assume that $x$ is in a non-polystable orbit under the action
of $G^{\C}$.  If $(Y,\xi)$ is the polarized cone  of $\hat{S}(x)$ then there is a test configuration $\cY$ with
$\pi^{-1}(1)=Y_1 =(Y,\xi)$ and central fiber $Y_0$ corresponding to a polystable $x' \in U$.  Since $\cY$ is smooth $\pi:\cY\rightarrow\Delta\subset\C$ is a submersion, and by Ehresmann's theorem there is a diffeomorphism
\[ F:C(M)\times\Delta \rightarrow\cY , \]
where $C(M)=(Y,\xi)$.  Furthermore we may take $F$ to be $T$-equivariant, where $T$ is the torus generated by $\xi$.
Then $F$ defines a smooth family of Sasakian structures on $M$ denoted by $M_z,\ z\in\Delta$.
By Proposition~\ref{prop:finiteGIT} the central fiber $Y_0 =C(M_0)$ where $M_0$ is a Sasakian manifold
$(M,\eta_0,\xi,\Phi_0)$ with a cscS deformation in $\cS(\xi, \bar J_0)$.  That is, there is a $T$-invariant
$\phi\in C^\infty_b(M_0)$ so that $\tilde{\eta}=\eta_0 +d^c \phi$ and $\tilde{\omega}^T =\omega^T +\frac{1}{2}dd^c \phi$ defines
a cscS structure.  Using $F$ we see that the deformed structure $(\tilde{\eta}_z,\xi,\tilde{\Phi}_z,\tilde{g}_z)$ of $M_z$
with $\tilde{\eta}_z=\eta_z +d^c \phi$ is a Sasakian structure for $|z|<\epsilon$ for some small $\epsilon >0$.
Then we see that $(\tilde{\eta}_z,\xi,\tilde{\Phi}_z,\tilde{g}_z),\ |z|<\epsilon,\ z\neq 0,$ is a family of Sasakian structures in
$\cS(\xi, \bar J)$ with $\lim_{z\rightarrow 0} \Cal(\tilde{g}_z) =0$, since the metrics $\tilde{g}_z$ converge uniformly
to the csc structure $(\tilde{\eta}_0,\xi,\tilde{\Phi}_0,\tilde{g}_o)$.  The Theorem now follows from (\ref{eq:Cal-bound}).
\end{proof}

\section{Examples}\label{subsec:examples}

We give some examples of cscS manifolds for which the previous results give nontrivial cscS deformations giving new cscS metrics.
We also get some examples which are K-semistable but not K-polystable.

\subsection{Toric Sasakian manifolds}

We give some toric 5-dimensional cscS manifolds with nontrivial deformations.  The examples are quasi-regular and
are given by explicit cones over two dimensional fans describing orbifold toric surfaces.  But modifications of
the following arguments using nonrational polytopes as in~\cite{Abr10} and~\cite{Leg11} should give irregular examples also.

\begin{prop}\label{prop:toric}
Let $(M^{2m+1},g,\eta,\xi,\Phi)$ be a compact toric Sasakian manifold.  Then $H^1(M,\R)=0$ and
the basic Hodge numbers satisfy $h^{0,k}_b =0$ for $k\geq 1$.
\end{prop}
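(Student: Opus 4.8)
The plan is to reduce both assertions to the single vanishing $h^{0,k}_b=0$ for $k\ge 1$, and then to extract that vanishing from the torus symmetry. First I would treat $H^1(M,\R)=0$ by passing to basic cohomology. On a compact Sasakian manifold the Reeb foliation $\mathscr{F}_\xi$ plays the role of an orbifold circle bundle with Euler class $[d\eta]_b$, and there is the associated Gysin-type long exact sequence in basic cohomology (see~\cite{BoyGal08})
\[ \cdots \longrightarrow H^{r-2}_b \xrightarrow{\,\cup[d\eta]_b\,} H^r_b \longrightarrow H^r(M,\R) \longrightarrow H^{r-1}_b \xrightarrow{\,\cup[d\eta]_b\,} H^{r+1}_b \longrightarrow \cdots. \]
Taking $r=1$ and using $H^{-1}_b=0$ and $H^0_b=\R$, together with the fact that $\cup[d\eta]_b\colon H^0_b\to H^2_b$ sends $1$ to the nonzero transverse K\"ahler class $[d\eta]_b$ and is hence injective, I obtain $H^1(M,\R)\cong H^1_b$. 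The transverse Hodge decomposition of El Kacimi-Alaoui~\cite{ElKac90} then gives $H^1_b\otimes\C=H^{1,0}_b\oplus H^{0,1}_b$ with $h^{1,0}_b=h^{0,1}_b$, so that $\dim_\R H^1(M,\R)=2h^{0,1}_b$. Thus both assertions follow once I show $h^{0,k}_b=0$ for every $k\ge 1$.

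For the vanishing I would exploit that a toric Sasakian manifold carries an effective action of $T^{m+1}$ preserving $(\eta,\xi,\Phi)$ with $\xi\in\t=\Lie(T^{m+1})$, which descends to a Hamiltonian action on the transverse K\"ahler structure $(\nu(\mathscr{F}_\xi),\bar J,\omega^T)$ of complex dimension $m$ with an $m$-dimensional space of Hamiltonians, i.e. the transverse structure is \emph{transversely toric}. For generic $a\in\t$ the function $\phi=\eta(X_a)$ is basic (as $[\xi,X_a]=0$) and satisfies $d\phi=-X_a\contr d\eta$, so its critical set is where $X_a\in L_\xi$, the transverse zeros of $X_a$; since the flow of $X_a$ preserves $\bar J$ and $\omega^T$, $\phi$ is a transverse Morse-Bott function with complex negative normal bundles, hence of even index. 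Because the transverse torus is maximal, a generic $a$ can be chosen so that this zero set is the finite set of transverse fixed points, namely the one-dimensional $\TT$-orbits (the rays) of the affine cone $Y=C(M)\cup\{0\}$, so that $\phi$ is perfect with isolated even critical leaves. Then $H^{\text{odd}}_b=0$, and the associated complex (Bialynicki-Birula) cell decomposition exhibits the even basic cohomology as spanned by classes of type $(p,p)$, whence $h^{p,q}_b=0$ for $p\ne q$; in particular $h^{0,k}_b=0$ for $k\ge 1$, which completes the reduction.

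The hard part is that in the irregular case the leaf space is not an orbifold, so I cannot simply invoke the Dolbeault vanishing $H^q(X,\cO_X)=0$ for a compact toric K\"ahler orbifold $X$; the whole argument must be carried out at the level of basic cohomology, where a transverse Morse/Bialynicki-Birula theory computing $H^{p,q}_b$ must be justified uniformly in $\xi$. I would secure this either by developing the basic Morse-Bott argument above directly, or, as a cross-check, by approximating $\xi$ inside the Sasaki cone of the torus by quasi-regular Reeb fields $\xi_j\to\xi$: for each quasi-regular $\xi_j$ the leaf space is a compact toric K\"ahler orbifold with $h^{p,q}=0$ for $p\ne q$, after which I would appeal to the rigidity of the basic Hodge numbers under such a variation of the Reeb field within the Sasaki cone.
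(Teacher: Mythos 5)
Your reduction of $H^1(M,\R)=0$ to the basic Hodge vanishing is fine: the Gysin-type sequence giving $H^1(M,\R)\cong H^1_b$ together with the transverse Hodge decomposition is a correct (and essentially equivalent) replacement for the paper's harmonic-form argument, which instead shows directly that a $T$-invariant harmonic $1$-form is basic. The problem is with the core claim $h^{0,k}_b=0$, which is exactly the part you do not actually prove. Your proposed route runs through a transverse Morse--Bott/Bialynicki--Birula theory for the basic \emph{Dolbeault} cohomology of an irregular Reeb foliation: you need not only that a generic transverse Hamiltonian is a perfect basic Morse function with even indices (a transverse Frankel-type statement), but also that the resulting decomposition forces all even basic classes to be of type $(p,p)$. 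Neither of these is available off the shelf at the level of basic cohomology of a non-quasi-regular foliation, and you acknowledge as much; as written this is a program, not a proof. Your fallback --- approximating $\xi$ by quasi-regular $\xi_j$ in the Sasaki cone and invoking ``rigidity of basic Hodge numbers'' --- has a more serious defect: changing the Reeb field changes the foliation $\mathscr{F}_\xi$ itself, so the basic Dolbeault complexes for $\xi$ and $\xi_j$ are complexes over genuinely different foliations. Invariance of the basic \emph{Betti} numbers in this situation is known, but invariance of the basic \emph{Hodge} numbers is a delicate theorem that you cannot simply ``appeal to''; assuming it begs precisely the question at hand.

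For contrast, the paper's argument is far more elementary and avoids all of this. Given a harmonic $\gamma\in H^{0,k}_b$, the conjugate $\ol{\gamma}$ is a basic, transversely holomorphic $(k,0)$-form; evaluating it on $k$-fold wedges of the transversely holomorphic vector fields generated by $\t=\Lie(T^{m+1})$ (which span the transverse holomorphic tangent space on an open dense set) produces basic transversely holomorphic functions, hence constants, and these constants must vanish because the wedge degenerates on the lower-dimensional torus strata when $k\ge 1$. If you want to salvage your approach you would need to either carry out the transverse Morse--Bott/BB theory in detail or prove the Hodge-number rigidity across the Sasaki cone; the direct evaluation argument sidesteps both.
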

\begin{proof}
Let $\gamma\in H_b^{0,k}$ be harmonic.  Then $\ol{\gamma}\in\Gamma(\Lambda_b^{k,0})$ is a basic, transversely holomorphic form.
The torus $T^{m+1}$ acts on $M$ preserving the foliation an transversely holomorphic structure.
Let $\{e_i\}_{i=1,\ldots,m+1}$ be a basis of $\t =\Lie(T)$ with $e_{m+1} =\xi$.
At any point of $M$, in an open dense set, linear combinations of $\{e_i\}_{i=1,\ldots,m}$ span the transversal holomorphic
tangent space.  Let $f_1 \wedge\cdots\wedge f_k \in\Gamma(\Lambda^k TM)$ where the $f_i$ are linear combinations of
$\{e_i\}_{i=1,\ldots,m}$.  Then $\ol{\gamma}(f_1 \wedge\cdots\wedge f_k)$ is constant because it basic and transversely holomorphic.
But there exists strata where $f_1 \wedge\cdots\wedge f_k =0$ if $k\geq 1$, so this constant must be zero.

Suppose $\beta\in\cH^1_g$ is harmonic.  Since $\beta$ is
invariant under $T$, $\cL_\xi \beta =d \xi\contr\beta=0$.  Thus
$\xi\contr\beta=c$ and an easy argument shows $c=0$.  Thus $\cH^1_g \cong H^1_b =0$, by the above.
\end{proof}

If follows from Proposition~\ref{prop:comp-isom} that if $M$ is a toric Sasakian manifold
\[ H^k(\cA^\bullet) =H^k(\cB^\bullet),\quad\text{for }k\geq 1.\]

A toric orbifold surface is described by a fan $\Sigma$ with 1-dimensional cones
\[ \Sigma^{(1)}=\lbrace u_1,\ldots,u_d \rbrace, \]
where $u_i \in\Z^2, i=1,\ldots, d$ are not necessarily primitive.  Then an orbifold polarization is given by a polytope
$\Delta$ defined by
\[ \langle u_i, x\rangle \leq \lambda_i,\quad \lambda_i \in\Z,\ i=1,\ldots,d.\]
Denote by $X_\Delta$ the polarized surface.
Then the vectors $w_i =(u_i, \lambda_i)\in\Z^3$ span a cone in $\Z^3$ defining the polarized affine toric variety $(Y,\xi)$,
with polarization $\xi =(0,0,1)$ giving the cone over a toric Sasakian manifold.

As in~\cite{Don02} we define a measure $d\sigma$ on $\del\Delta$ which on the edge defined by
$\langle u_k, x\rangle \leq \lambda_k$ is
\[ d\sigma:= \frac{1}{|u_k|} d\sigma_0,\]
where $d\sigma_0$ is the Lebesgue measure.  The average of the scalar curvature $S_0$ of a K\"{a}hler metric is an invariant
of the polarization, and is given by
\begin{equation}\label{eq:av-toric}
S_0 =\frac{\int_{\del\Delta} d\sigma}{\int_{\Delta} d\mu}.
\end{equation}

S. Donaldson~\cite{Don09} proved that K-polystability with respect to toric degenerations implies the existence of a constant
scalar curvature K\"{a}hler metric.  In general K-polystability is difficult to check, but B. Zhou and X. Zhu~\cite{ZhouZhu08a}
gave a simple condition implying K-polystability relative to toric degenerations.

Suppose the Futaki invariant of $X_\Delta$ vanishes.  This is equivalent to the vanishing of
\[ L(\theta):= \int_{\del\Delta} \theta\, d\sigma -S_0 \int_{\Delta} \theta\, d\mu \]
for all affine linear functions $\theta$.  If
\begin{equation}\label{eq:ZhouZhu}
 S_0 <\frac{n+1}{\lambda_i},\ i=1,\ldots d,
\end{equation}
then $X_\Delta$ is K-polystable for toric degenerations.  Donaldson's result then implies that $X_{\Delta}$ admits a constant
scalar curvature K\"{a}hler metric.

Consider $\C\PP^1\times\C\PP^1$ with the $\Z_q$-action
\[ \alpha\cdot ([x_1,y_1],[x_2,y_2])=([\alpha x_1,y_1],[\alpha x_2,y_2]),\ \text{for}\ \alpha\in\mu_q, \]
where $\mu_q \subset\C^*$ is the group of q-th roots of unity.
Then $\C\PP^1\times\C\PP^1 /\Z_q$ is a toric surface with fan $\Sigma_q$ given by
\[ \Sigma^{(1)} =\lbrace e_1, e_1 +qe_2, -e_1, -e_1 -qe_2 \rbrace. \]

For simplicity we restrict to $q=3$, shown in Figure~\ref{fig: sur-quo}.

\begin{figure}[htbp]\label{fig: sur-quo}
\centering
\psset{unit=0.85cm}
\begin{pspicture}(-4.5,-7)(4.5,0.5)
$$
\xymatrix @M=0mm{
\bullet & \bullet & \bullet & \bullet & \bullet \\
\bullet & \bullet & \bullet & \bullet & \bullet \\
\bullet & \bullet & \bullet & \bullet & \bullet \\
 \bullet & \bullet & \bullet\ar[r]\ar[ruuu]\ar[l]\ar[lddd]& \bullet & \bullet \\
\bullet & \bullet & \bullet & \bullet & \bullet \\
\bullet & \bullet & \bullet & \bullet & \bullet \\
\bullet & \bullet & \bullet & \bullet & \bullet
}
$$
\end{pspicture}
\caption{$\C\PP^1\times \C\PP^1/\Z_3$}
\end{figure}

Then the toric minimal resolution of $\C\PP^1\times\C\PP^1 /\Z_3$ has $\Sigma^{(1)}$ given by
$u_1 =(1,0), u_2 =(1,1), u_3 =(1,2), u_4 =(1,3), u_5 =(0,1), u_6 =-u_1, \ldots, u_{10} =-u_5$.

If one chooses $(\lambda_1, \ldots, \lambda_{10}) =(9,8,8,10,6,9,8,8,10,6)$, then one gets a polytope with this fan and
one computes using (\ref{eq:av-toric}) that

\[ S_0 =\frac{12}{15}+\frac{8\sqrt{10}}{345}+\frac{2\sqrt{5}}{115} +\frac{2\sqrt{2}}{115} =0.24115\ldots. \]
The inequalities (\ref{eq:ZhouZhu}) are clearly satisfied.  Then the toric affine variety $Y$ defined by the $w_i$ as above,
$w_1 =(1,0,9),w_2 =(1,1, 8),\ldots, w_{10} =(0,-1, 6)$, is the polarized cone over a cscS manifold $(M,g,\xi)$.

Let $\hat{\Sigma}$ be the cone spanned by the $w_i, i=1,\ldots,d$.  We are interested in the deformations of $Y_{\hat{\Sigma}}$
preserving $\xi=(0,0,1)$.  Consider the weight space decomposition of $H^1(\mathcal{B}) =H^1(\mathcal{A})$ under
$\TT^3$
\[ H^1(\mathcal{A}) =\bigoplus_{R\in\mathcal{W}} H^1(\mathcal{A})(R),\]
where clearly each non-trivial term has $R$ vanishing on $\xi$.  The work of
N. Ilten and R. Vollmert~\cite{IltVol12,Vol11} constructs deformations corresponding to a homogeneous
component $H^1(\mathcal{A})(R)$ from \emph{admissible} Minkowski decompositions of
$\hat{\Sigma}_R = \hat{\Sigma}\cap\lbrace R=1\rbrace$.  With $R=e_1^*$ for this example,
$\hat{\Sigma}_{e_1^*}$ has a three term admissible Minkowski decomposition
\[ \hat{\Sigma}_{e_1^*} =\Delta_0 +\Delta_1 +\Delta_2 \]
giving a 2-parameter deformation spanning $H^1(\mathcal{A})(e_1^*)$.  Similarly, we have a 2-parameter deformation
spanning $H^1(\mathcal{A})(-e_1^*)$.

\begin{prop}\label{prop:obst-toric}
Let $M$ be a toric Sasakian 5-manifold, then $H^2(\mathcal{B})=H^2(\mathcal{A})=0$.
\end{prop}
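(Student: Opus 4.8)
The plan is to reduce the statement to the vanishing of the top transverse Dolbeault cohomology of the tangent sheaf, and then to exploit the torus action exactly as in the proof of Proposition~\ref{prop:toric}. First, since $M$ is toric we have $h^{0,k}_b=0$ for all $k\geq 1$ by Proposition~\ref{prop:toric}, so Proposition~\ref{prop:comp-isom} gives $H^k(\cB^\bullet)\cong H^k(\cA^\bullet)$ for every $k\geq 1$. In particular $H^2(\cB^\bullet)\cong H^2(\cA^\bullet)$, and it suffices to prove $H^2(\cA^\bullet)=0$. Recall that $\cA^\bullet$ is the basic Dolbeault complex with coefficients in the transverse holomorphic tangent bundle $\nu(\mathscr{F}_\xi)^{1,0}$, so $H^2(\cA^\bullet)$ is the transverse analogue of $H^2(X,\Theta_X)$; since the transverse complex dimension is $m=2$, degree $2$ is the \emph{top} degree of the complex, which is what makes duality effective.

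Next I would invoke the transverse Hodge theory and Serre-type duality for transversally K\"ahler foliations (cf.~\cite{ElKac90}), applied to the bundle $\nu(\mathscr{F}_\xi)^{1,0}$. This identifies $H^2(\cA^\bullet)\cong H^0_b\bigl(M,\Lambda^{1,0}_b\otimes\Lambda^{2,0}_b\bigr)^*$, the dual of the space of basic, transversely holomorphic sections of $\Lambda^{1,0}_b\otimes\Lambda^{2,0}_b$, where $\Lambda^{2,0}_b$ is the transverse canonical bundle and $\Lambda^{1,0}_b=(\nu(\mathscr{F}_\xi)^{1,0})^*$. Thus it is enough to show that every such section vanishes.

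To kill these sections I would rerun the argument of Proposition~\ref{prop:toric}. The transverse torus acts on $M$ preserving the foliation and its transverse holomorphic structure, so $H^0_b(M,\Lambda^{1,0}_b\otimes\Lambda^{2,0}_b)$ decomposes into finitely many weight spaces, and each equivariant section is constant in the invariant holomorphic frame built from $\{e_1,e_2\}$ on the open dense orbit. Pairing such a section against the invariant sections $e_i\otimes(e_1\wedge e_2)$ of the dual bundle $\nu(\mathscr{F}_\xi)^{1,0}\otimes\Lambda^2\nu(\mathscr{F}_\xi)^{1,0}$ produces basic, transversely holomorphic functions, which are therefore constant; but on the closed strata corresponding to the lower-dimensional orbits the fundamental vector fields degenerate, so $e_1\wedge e_2=0$ there, and hence each such constant must vanish. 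This forces the section to be zero, giving $H^2(\cA^\bullet)=0$ and therefore $H^2(\cB^\bullet)=0$.

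As a consistency check in the quasi-regular case, where $M$ is a Seifert bundle over a compact toric orbifold surface $X$, one may instead use the toric Euler sequence
\[ 0\to\mathcal{O}_X^{\oplus(d-2)}\to\bigoplus_{i=1}^{d}\mathcal{O}_X(D_i)\to\Theta_X\to 0: \]
the associated long exact sequence, together with $H^3(X,\mathcal{O}_X)=0$ (as $\dim_\C X=2$) and $H^2(X,\mathcal{O}_X(D_i))\cong H^0(X,\mathcal{O}_X(K_X-D_i))^*=0$ (the latter since $K_X-D_i$ is anti-effective, its defining polytope being empty because the ray generators positively span), forces $H^2(X,\Theta_X)=0$. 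The step requiring the most care is the \emph{irregular} case, where there is no quotient orbifold $X$ to which the Euler sequence applies: there the purely transverse formulation is essential, and one must verify that El Kacimi's duality and the stratification argument are carried out for the foliation directly. Justifying the torus-equivariant weight decomposition and the vanishing on the singular strata in this transverse setting is the one genuinely delicate point of the proof.
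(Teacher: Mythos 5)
Your proposal is correct and follows essentially the same route as the paper: the paper likewise reduces to $H^2(\cA^\bullet)\cong H^0_{\delb_b}(\Lambda^{2,0}_b\otimes\Lambda^{1,0}_b)$ (the identification $H^2(\cB^\bullet)=H^2(\cA^\bullet)$ having already been recorded after Proposition~\ref{prop:toric}) and then kills the transversely holomorphic sections by evaluating them on the holomorphic vector fields generated by $\TT^3$, which degenerate on the lower-dimensional strata. Your added details (the explicit weight-space/stratification argument and the Euler-sequence consistency check in the quasi-regular case) merely flesh out what the paper leaves terse.
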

\begin{proof}
Note that $H^2(\mathcal{A})= H^0_{\del_b}(\Lambda^{2,0}\otimes\Lambda^{1,0})$,
while the latter consists of transversely holomorphic sections and is easily seen to be zero by evaluating a section
on holomorphic vector fields generated by $\TT^3$.
\end{proof}

Thus we have a 4-parameter family of deformations
\[ H^1(\mathcal{A})(e_1^*)\oplus H^1(\mathcal{A})(-e_1^*)\]
which are integrable by Proposition~\ref{prop:obst-toric}.  The polystable elements with respect to $\TT^3$, the complexification
of the identity component of the isometry group of $(M,g,\xi)$, are $(0,0)$ and $(x_1,x_2)$ with $x_1 \neq 0$ and $x_2 \neq 0$.
The latter give cscS metrics with a $T^2$ group of isometries.  The remaining orbits $(x_1, 0)$ and $(0,x_2)$
give examples that are K-semistable but not K-polystable by Theorems~\ref{thm:csc-def} and~\ref{thm:K-semi-def}.

In the second example we consider a cone over a partial resolution of $\C\PP^1\times\C\PP^1 /\Z_3$.
Let $\hat{\Sigma}$ be the cone spanned by
$w_1 =(1,0,9),w_2 =(1,1,7), w_3 =(1,3,10),w_4 =(0,1,6),w_5=(-1,0,9),w_6 =(-1,-1,7),w_7 =(-1,-1,10),w_8 =(0,-1,6)$.
It is easily seen to be strongly convex with each pair of successive rays extending to a basis of $\Z^3$.  Thus
$Y_{\hat{\Sigma}}$ is a toric affine cone smooth away from the vertex and the cone over a toric Sasakian 5-manifold $M$.

Using (\ref{eq:av-toric}) we compute
\[ S_0 =\frac{20}{223} +\frac{6\sqrt{10}}{223} +\frac{14\sqrt{2}}{223} =0.26355\ldots \]
and the inequalities (\ref{eq:ZhouZhu}) are satisfied.  And $Y_{\hat{\Sigma}}$ is the cone over a toric cscS manifold
$(M,g,\xi)$.

Using the same arguments as above and the fact that $\hat{\Sigma}_{e_1^*}$ has a two term admissible Minkowski decomposition,
we have a 2-parameter family of deformations
\[ H^1(\mathcal{A})(e_1^*)\oplus H^1(\mathcal{A})(-e_1^*).\]
The polystable elements with respect to $\TT^3$ are $(0,0)$ and $(x_1,x_2)\in H^1(\mathcal{A})(e_1^*)\oplus H^1(\mathcal{A})(-e_1^*)$ with $x_1 x_2 \neq 0$.  The remaining orbits are not K-polystable but are K-semistable.

For the third example we consider a non-regular modification of the first example.  Define $\hat{\Sigma}$ to be the cone
spanned by $w_1 =(1,0,9),w_2 =(1,1,8),w_3 =(1,2,8),w_4 =(1,3,10),w_5 =(0,3,10),w_6 =(-1,0,9),w_7 =(-1,-1,8),w_8 =(-1,-2,8),
w_9 =(-1,-3,10),w_{10} =(0,-3,10)$.  We compute
\[ S_0 =\frac{32}{265} +\frac{8\sqrt{10}}{795} +\frac{6\sqrt{5}}{265} +\frac{6\sqrt{2}}{265} =0.23522\ldots,\]
and inequalities (\ref{eq:ZhouZhu}) are easily seen to be satisfied, so we have a cscS metric.
In this example $\hat{\Sigma}_{e_1^*}$ has a three term admissible Minkowski decomposition, and we get a 4-parameter
family of deformations
\[ H^1(\mathcal{A})(e_1^*)\oplus H^1(\mathcal{A})(-e_1^*).\]
Again, the polystable orbits are $(0,0)$ and $(x_1,x_2)$ with $x_1 \neq 0$ and $x_2 \neq 0$.

One can construct an unlimited number of examples by taking cones over partial resolutions of $\C\PP^1\times\C\PP^1 /\Z_q$
as in these examples.

\subsection{3-Sasakian manifolds}

A 3-Sasakian manifold is a Riemannian manifold $(M,g)$ admitting three Sasakian structures $(\eta_i,\xi_i,\Phi_i), i=1,2,3$
with $[\xi_i,\xi_j] =-\varepsilon^{ijk}\xi_k$, where $\varepsilon^{ijk}$ is antisymmetric and $\varepsilon^{123}=1$.
Thus $\{\xi_1,\xi_2,\xi_3\}$ generate the Lie algebra $\LSp(1)$ of $\Sp(1)$.  Thus $\Sp(1)$ acts by isometries on
$(M,g)$ rotating the Sasakian structures.  See~\cite{BoyGal99} and~\cite{BoyGal08} for details.

We remark that a 3-Sasakian structure on $(M,g)$ is equivalent to a hyperk\"{a}hler structure on
$(C(M),\ol{g}=dr^2 +r^2 g)$.  Thus $M$ must be of dimension $n=4m-1$ and $(M,g)$ is Sasaki-Einstein with Einstein constant
$4m-2$.

Fix a Sasakian structure, say $(\eta,\xi,\Phi):= (\eta_1,\xi_1,\Phi_1)$.  Since $(M,g)$ has positive Ricci curvature, standard
vanishing theorems and Proposition~\ref{prop:comp-isom} imply the following.
\begin{prop}
For $(M,\eta,\xi,\Phi)$ we have $H^1(\cB)=H^1(\cA)$ and $H^2(\cA)=0$.
\end{prop}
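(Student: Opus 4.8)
The plan is to reduce both statements to transverse vanishing theorems, which are available here because the fixed structure $(\eta,\xi,\Phi)=(\eta_1,\xi_1,\Phi_1)$ is Sasaki--Einstein with positive Einstein constant. Concretely, since $(M,g)$ is Sasaki--Einstein we have $\Ric_g=(n-1)g=(4m-2)g$, so Proposition~\ref{prop:Sasaki-Ric} gives $\Ric^T=\Ric_g+2g^T=4m\,g^T$ on $D$; thus the transverse K\"ahler structure is K\"ahler--Einstein with strictly positive transverse Ricci curvature. Writing $m_0:=2m-1=\dim_{\C}$ for the transverse complex dimension and $K^T:=\Lambda^{m_0,0}_b$ for the transverse canonical bundle, the transverse Ricci form $\rho^T$ is then a positive representative of $2\pi c_1(\mathscr{F}_\xi)$, so $(K^T)^{-1}$ is a \emph{positive} basic line bundle. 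This positivity is the input for the transverse Hodge theory and vanishing theorems of \cite{ElKac90,BoyGal08} that I will use below.

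For the equality $H^1(\cB^\bullet)=H^1(\cA^\bullet)$ I would apply Proposition~\ref{prop:comp-isom} with $k=1$: the natural map $H^1(\cB^\bullet)\to H^1(\cA^\bullet)$ is injective provided $H^{0,1}_b=0$ and surjective provided $H^{0,2}_b=0$. Both vanishings come from the transverse Kodaira vanishing theorem applied to the positive bundle $(K^T)^{-1}$: since $H^{0,q}_b\cong H^q_{\delb_b}\bigl(K^T\otimes(K^T)^{-1}\bigr)$, Kodaira vanishing forces $H^{0,q}_b=0$ for every $q\geq 1$, in particular for $q=1,2$. Hence the comparison map is an isomorphism.

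For $H^2(\cA^\bullet)=0$ I would start from the definition of the complex $\cA^\bullet$, which identifies $H^2(\cA^\bullet)$ with the bundle-valued basic Dolbeault group $H^{0,2}_b\bigl(\nu(\mathscr{F}_\xi)^{1,0}\bigr)$. The algebraic step is the standard isomorphism of transverse holomorphic bundles $\nu(\mathscr{F}_\xi)^{1,0}\cong\Lambda^{m_0-1,0}_b\otimes(K^T)^{-1}$, obtained by contracting with the transverse holomorphic volume form. Under this isomorphism a $(0,2)$-form with values in $\Lambda^{m_0-1,0}_b\otimes(K^T)^{-1}$ is nothing but a $(m_0-1,2)$-form with values in $(K^T)^{-1}$, so that
\[ H^2(\cA^\bullet)\cong H^{0,2}_b\bigl(\nu(\mathscr{F}_\xi)^{1,0}\bigr)\cong H^{m_0-1,\,2}_b\bigl((K^T)^{-1}\bigr). \]
Because $(K^T)^{-1}$ is a positive basic line bundle and the total bidegree satisfies $(m_0-1)+2=m_0+1>m_0$, the transverse Akizuki--Nakano vanishing theorem yields $H^{m_0-1,2}_b\bigl((K^T)^{-1}\bigr)=0$, and therefore $H^2(\cA^\bullet)=0$.

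The two reductions and the contraction isomorphism are routine. The only genuine difficulty, and where I would spend the care, is in justifying the \emph{transverse} formulations of the Kodaira and Akizuki--Nakano vanishing theorems for positive basic line bundles on the (possibly irregular) Reeb foliation: these rest on transverse Hodge theory and the basic Bochner--Kodaira--Nakano identity for the transverse $\delb_b$-Laplacian, with the positive curvature of $(K^T)^{-1}$ supplying the required sign in the curvature term. I would cite \cite{ElKac90,BoyGal08} for this transverse machinery rather than reprove it.
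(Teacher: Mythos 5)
Your argument is correct and follows essentially the same route as the paper, whose entire proof is the one-line remark that positive Ricci curvature, ``standard vanishing theorems'' and Proposition~\ref{prop:comp-isom} give the result. You have simply made explicit which vanishing theorems are meant: transverse Kodaira vanishing for $H^{0,1}_b=H^{0,2}_b=0$ (feeding into Proposition~\ref{prop:comp-isom} with $k=1$) and the transverse Akizuki--Nakano theorem applied to $\nu(\mathscr{F}_\xi)^{1,0}\cong\Lambda^{m_0-1,0}_b\otimes(K^T)^{-1}$ for $H^2(\cA^\bullet)=0$, all powered by the positivity of $\rho^T$ coming from the Sasaki--Einstein condition.
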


The element $\tau =e^{\frac{\pi}{2}\mathbf{j}}\in\Sp(1)$ acts on $(\eta,\xi,\Phi)$ by $\tau\cdot(\eta,\xi,\Phi)=(-\eta,-\xi,-\Phi)$.
We consider deformations equivariant with respect to $\tau$.  We have a conjugate linear isomorphism
\[ \tau_* : H^1(\cA)\rightarrow H^1(\cA).\]
Since $\tau^2 =\Id$, this is a real structure and we define $\re H^1(\cA)$ to be the subspace fixed by $\tau_*$.

Let $G=\Aut(\eta,\xi,\Phi)_0$ be the identity component of the automorphism group.  By the Hilbert-Mumford criterion
one gets that the decomposable element of $\re H^1(\cA)\otimes\C$ are polystable for the action of $G^{\C}$.
If one takes a 1-parameter subgroup $\C^* < G^{\C}$, then we may assume $\U(1)\subset\C^*$ is contained in $G$ by acting by
conjugation.  Consider weight space decomposition
\[ H^1(\cA) =\bigoplus_{k\in\Z} V_k , \]
and $\tau_*(V_k) =V_{-k}$.  Therefore if a decomposable element of $\re H^1(\cA)\otimes\C$ has a nonzero component in
$V_k$ it also does in $V_{-k}$.

\begin{thm}
Suppose $(M,g)$ is a 3-Sasakian manifold and $(\eta,\xi,\Phi)$ is a fixed Sasakian structure.  Then the decomposable elements
of $\re H^1(\cA)\otimes\C$ give integrable Sasaki-Einstein deformations.

In particular, if $H^1(\cA)\neq 0$, then $(M,g)$ admits a non-trivial Sasaki-Einstein deformation.
\end{thm}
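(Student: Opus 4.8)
The plan is to run the finite-dimensional GIT reduction of Proposition~\ref{prop:finiteGIT} and Theorem~\ref{thm:csc-def} on the polystable classes isolated in the discussion above, and then to promote the resulting constant scalar curvature Sasakian (cscS) deformation to a Sasaki--Einstein one using the transverse Fano property inherited from the 3-Sasakian structure. Throughout I identify $\re H^1(\cA)\otimes\C$ with $H^1(\cA)=H^1(\cB)$ by the preceding proposition, and I use the slice $\hat S$ of Proposition~\ref{prop:map}, whose image meets the $\ccG$-orbit of every nearby integrable structure.

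First I would dispose of integrability. The preceding proposition gives $H^2(\cA)=0$, and the positive Ricci curvature of $(M,g)$ yields, via standard vanishing theorems, the basic Hodge numbers $h^{0,k}_b=0$ for $k\ge1$; hence by Proposition~\ref{prop:comp-isom} the injection $H^2(\cB)\hookrightarrow H^2(\cA)=0$ shows $H^2(\cB)=0$. Consequently the obstruction map $\Theta\colon B\to H^2(\cB^\bullet)$ of Proposition~\ref{prop:slice} vanishes identically, so $S$ lands in $\cK^i$; since $\hat S$ is obtained from $S$ by deforming along $\ccG$-orbits, which preserve integrability, one has $\hat S(x)\in\cK^i$ for all $x\in B$. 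In particular every decomposable element of $\re H^1(\cA)\otimes\C$ integrates to a genuine Sasakian structure.

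Next I would produce the canonical metric. A decomposable element $x=z\,a$ with $a\in\re H^1(\cA)$ is polystable for the $G^{\C}$-action by the Hilbert--Mumford argument recalled above: for any one-parameter subgroup $\rho\colon\C^*\to G^{\C}$ one may arrange $\rho(S^1)\subset G$, and in the weight decomposition $H^1(\cA)=\bigoplus_k V_k$ one has $\tau_*(V_k)=V_{-k}$, so the $\tau_*$-real vector $a$ has its weights occurring in $\pm$ pairs. Hence $\lim_{\lambda\to0}\rho(\lambda)\cdot x$ exists and is nonzero only when $x$ is $\rho$-fixed, so the orbit $G^{\C}\cdot x$ is closed. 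Polystability together with the integrability of $\hat S(x)$ lets me invoke Proposition~\ref{prop:finiteGIT}: there is $y$ in the $G^{\C}$-orbit of $x$ with $s^T(\hat S(y))-s^T_0=0$, a cscS structure deforming $\hat S(x)$ as in~(\ref{eq:trans-def2}).

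It remains to promote cscS to Sasaki--Einstein, which I expect to be the crux. The 3-Sasakian structure is Sasaki--Einstein, hence transverse Fano with $[\rho^T]_b$ a fixed positive multiple $a$ of the polarization $[\omega^T]_b$. The deformation fixes $\xi$ and $[\omega^T]_b$, and the vanishing $h^{0,1}_b=h^{0,2}_b=0$ rigidifies the basic Hodge structure, so $c_1^B(\mathscr{F}_\xi)$ does not move and the deformed structures remain transverse Fano with $[\rho^T]_b=a[\omega^T]_b$. For such a structure a cscS metric is automatically transverse K\"ahler--Einstein: writing $\rho^T-a\,\omega^T=\tfrac12 d_b d^c_b h$ by the basic $dd^c$-lemma and taking the transverse trace, constancy of $s^T$ makes $\Delta_b h$ constant, hence zero after integration over the compact $M$, so $h$ is constant and $\rho^T=a\,\omega^T$. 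Thus $\hat S(y)$ is Sasaki--Einstein. Finally, if $H^1(\cA)\neq0$ then its real form $\re H^1(\cA)$ is nonzero, and a nonzero $a\in\re H^1(\cA)$ is decomposable with closed orbit not containing $0$; the associated $\hat S(y)$ therefore carries a genuinely nontrivial transverse complex structure, giving the asserted non-trivial Sasaki--Einstein deformation. The delicate point throughout is the persistence of the transverse Fano condition, i.e.\ that $c_1^B$ stays proportional to the fixed polarization under the deformation; once this is secured the cscS$\Rightarrow$Sasaki--Einstein step is the standard Ricci-potential argument.
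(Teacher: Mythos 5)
Your proposal is correct and follows essentially the route the paper intends: unobstructedness from $H^2(\cA)=0$ together with Proposition~\ref{prop:comp-isom}, polystability of complex multiples of $\tau_*$-real classes via the Hilbert--Mumford criterion and the weight pairing $\tau_*(V_k)=V_{-k}$, and then Proposition~\ref{prop:finiteGIT} to produce the canonical metric. The only addition is that you spell out the final cscS\,$\Rightarrow$\,Sasaki--Einstein step (persistence of the transverse Fano proportionality $[\rho^T]_b=a[\omega^T]_b$ and the Ricci-potential argument), which the paper leaves implicit; that step is carried out correctly.
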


In finite families of \emph{toric} 3-Sasakian 7-manifolds were constructed in~\cite{BGMR98}.  These include infinitely many
examples for each second Betti number $b_2(M) =k\geq 1$.  With respect to a fixed Sasakian structure $(\eta,\xi,\Phi)$ if
$b_2(M) >1$ then $\Aut(M,\eta,\xi,\Phi)_0 =T^3$ (cf.~\cite{vanCo12a}).  And one has the following.
\begin{prop}[\cite{vanCo12a}]
Let $(M,g)$ be a toric 3-Sasakian 7-manifold.  Then with respect to a fixed Sasakian structure one has
\[ \dim H^1(\cA) =b_1(M) -1, \]
and $H^1(\cA) =H^1(\cA)^{T^3}$.
\end{prop}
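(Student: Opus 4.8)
The plan is to descend to the leaf space of $\mathscr{F}_\xi$, reduce the computation to the tangent cohomology of the twistor space, and then transfer it to the four-dimensional quaternionic base, where the toric data make it explicit. Since $M$ is $3$-Sasakian, $\xi=\xi_1$ is generated by a circle in the compact group $\Sp(1)$ and is therefore quasi-regular, so the leaf space $\cZ=M/\mathscr{F}_\xi$ is a compact Fano K\"ahler--Einstein orbifold of complex dimension $3$ with a holomorphic contact structure (contact line bundle $L$, $L^{\otimes 2}\cong K_{\cZ}^{-1}$); it is the twistor space of the positive self-dual Einstein $4$-orbifold $\cO=M/\Sp(1)$, with twistor fibration $\pi:\cZ\to\cO$ having $\C\PP^1$ fibres. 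The complex $\cA^\bullet$ is the basic $\delb_b$-resolution of $\nu(\mathscr{F})^{1,0}$, which descends to the Dolbeault resolution of $\Theta_{\cZ}$, so $H^k(\cA^\bullet)\cong H^k(\cZ,\Theta_{\cZ})$ as orbifold cohomology. The residual torus of $T^3=T^2\times\U(1)_\xi$ acting on $\cZ$ is $T^2$, while $\U(1)_\xi$ acts trivially on every basic object; hence the asserted identity $H^1(\cA)=H^1(\cA)^{T^3}$ is equivalent to the statement that all of $H^1(\cZ,\Theta_{\cZ})$ is fixed by this residual $T^2$.

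Next I would use the contact geometry to cut $H^1(\cZ,\Theta_{\cZ})$ down to line-bundle data. The contact distribution $H\subset T\cZ$ gives $0\to H\to\Theta_{\cZ}\to L\to 0$ with $H\cong H^*\otimes L$. Writing $L=K_{\cZ}\otimes L^{\otimes 3}$ with $L^{\otimes 3}$ positive, the orbifold Kodaira vanishing theorem gives $H^i(\cZ,L)=0$ for $i\ge 1$, so the long exact sequence presents $H^1(\cZ,\Theta_{\cZ})$ as a quotient of $H^1(\cZ,H)$. The Penrose transform along $\pi$ then identifies this group with the space of infinitesimal deformations of the self-dual Einstein structure on $\cO$, while the obstruction space $H^2(\cZ,\Theta_{\cZ})=H^2(\cA)$ vanishes by the Proposition established above; thus the deformations are unobstructed and it remains to count them on $\cO$ and to determine their $T^2$-weights.

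Finally I would invoke the toric hypothesis. The base $\cO^4$ is a toric, i.e. $T^2$-symmetric, self-dual Einstein orbifold, and such orbifolds are classified by a cyclically ordered collection of primitive integral vectors (equivalently the weight matrix of the $3$-Sasakian reduction) whose combinatorics also computes $b_2(M)$. The $T^2$-equivariant deformations are parametrized by this datum and a direct count returns the second Betti number minus one. For the weight statement I would decompose $H^1(\cZ,\Theta_{\cZ})=\bigoplus_R H^1(\cZ,\Theta_{\cZ})(R)$ under $T^2$; the real structure $\tau_*$ induced by $e^{\frac{\pi}{2}\mathbf{j}}\in\Sp(1)$ interchanges the weight-$R$ and weight-$(-R)$ summands, and a transverse Weitzenb\"ock estimate built from the positive transverse Ricci curvature---the same positivity that forces $H^2(\cA)=0$---annihilates the harmonic representatives of nonzero weight, leaving $H^1(\cZ,\Theta_{\cZ})=H^1(\cZ,\Theta_{\cZ})^{T^2}$. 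Combining the two gives $H^1(\cA)=H^1(\cA)^{T^3}$ of dimension $b_2(M)-1$.

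I expect the hardest step to be this last one: extracting the exact count $b_2(M)-1$ from the reduction data, and proving the vanishing of the nonzero-weight components, both of which must be carried out in the orbifold category, tracking the singular strata of $\cO$ and the isotropy along the twistor fibration. The bookkeeping that converts the classifying data of $\cO$ into the single number $b_2(M)-1$, rather than the structural reductions above, is the delicate part.
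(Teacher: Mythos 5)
First, a point of reference: the paper does not prove this proposition at all --- it is imported verbatim from \cite{vanCo12a}, so there is no in-text argument to compare yours against. (Also note the statement as printed contains a typo: since $b_1(M)=0$ for any 3-Sasakian manifold, the intended dimension is $b_2(M)-1$, consistent with the surrounding discussion and with what you derive.) Your structural reductions are sound: $\xi_1$ is quasi-regular, the leaf space is the twistor space $\cZ$ of the self-dual Einstein orbifold $\cO$, $H^k(\cA)\cong H^k(\cZ,\Theta_{\cZ})$, and the contact sequence together with Kodaira vanishing for $L$ is a legitimate way to organize the computation. These steps faithfully set up the problem.

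The gap is that the two claims carrying all of the content --- the count $b_2(M)-1$ and the $T^2$-invariance --- are asserted rather than proved, and the mechanisms you propose for them do not work as stated. (a) You cannot read the dimension off the classification of toric self-dual Einstein orbifolds: that classification is by \emph{discrete} integral data, so it does not by itself produce a positive-dimensional deformation space; worse, $H^1(\cZ,\Theta_{\cZ})$ parametrizes deformations of $\cZ$ as a complex orbifold, and a generic such deformation admits no compatible real structure, hence is not a twistor space and does not correspond to any deformation of the self-dual Einstein metric on $\cO$ (this is precisely why the resulting Sasaki--Einstein deformations of $M$ fail to be 3-Sasakian). Only the fixed locus of $\tau_*$ has a Penrose-theoretic interpretation, so the identification you invoke cannot count all of $H^1$. (b) The proposed ``transverse Weitzenb\"ock estimate from positive transverse Ricci curvature'' does not annihilate nonzero-weight classes in $H^1(\cZ,\Theta_{\cZ})$: positivity of the (transverse) Ricci curvature controls $H^{0,q}_b$ and, via Akizuki--Nakano-type vanishing, $H^2(\cA)$, but it says nothing about the torus-weight decomposition of $H^1(\Theta)$ --- Fano orbifolds routinely carry non-invariant infinitesimal deformations. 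The actual proof in \cite{vanCo12a} is an explicit sheaf-cohomology computation on $\cZ$ exploiting its toric anticanonical divisors and the quotient description coming from the 3-Sasakian reduction; some such concrete computation is unavoidable here, and your outline stops exactly where it would have to begin.
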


Since $\TT^3 =T^3_{\C}$ acts trivial on $H^1(\cA)$, all the elements are trivially polystable.  Therefore we get a second
proof of a result proved analytically in~\cite{vanCo12b}.
\begin{thm}
Let $(M,g)$ be a toric 3-Sasakian 7-manifold.  Then $(M,g)$ admits a complex $b_2(M)-1$ dimensional space of Sasaki-Einstein
deformations.
\end{thm}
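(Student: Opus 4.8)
The plan is to run the finite-dimensional reduction of Proposition~\ref{prop:finiteGIT} on the full deformation space $H^1(\cB^\bullet)$, and to exploit the fact that for a toric $3$-Sasakian manifold the complexified automorphism group acts trivially, so that polystability holds for \emph{every} deformation, not merely for the decomposable ones treated in the preceding $3$-Sasakian theorem.

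First I would assemble the inputs. Fixing the Sasakian structure $(\eta,\xi,\Phi)$, I note that for $b_2(M)>1$ the identity component of the automorphism group is $G=\Aut(M,\eta,\xi,\Phi)_0=T^3$, so that $G^{\C}=\TT^3$; the cited proposition of the second author gives $\dim_{\C}H^1(\cA)=b_2(M)-1$ together with $H^1(\cA)=H^1(\cA)^{T^3}$, while the first $3$-Sasakian proposition above records $H^1(\cB)=H^1(\cA)$ and $H^2(\cA)=0$. Hence $G^{\C}=\TT^3$ acts trivially on $H^1(\cB)$, the orbit of each $x\in H^1(\cB)$ is the single closed point $\{x\}$, and so every $x$ is polystable.

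Next I would check integrability over the whole slice. From $H^2(\cA)=0$ and the vanishing $H^{0,2}_b=0$ (forced by the positive Ricci curvature), Proposition~\ref{prop:comp-isom} yields $H^2(\cB)=0$, so the obstruction map $\Theta$ of Proposition~\ref{prop:slice} is identically zero and every $\hat S(x)$ lies in $\cK^i$. I would then apply Proposition~\ref{prop:finiteGIT} to each polystable $x$: since the orbit is a single point, its conclusion holds with $y=x$, producing a constant scalar curvature Sasakian structure $\hat S(x)$ in $\cS(\xi,\ol{J}_x)$ for the deformed transverse complex structure $\ol{J}_x$.

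The last and most substantive step is to upgrade cscS to Sasaki--Einstein, for which I would reuse the mechanism of the preceding $3$-Sasakian theorem: the deformation fixes the Reeb field and keeps the transverse K\"ahler class a fixed positive multiple of the basic first Chern class, and in such a monotone class a transverse constant scalar curvature metric is transversally K\"ahler--Einstein, so the structure is Sasaki--Einstein. Letting $x$ range over a neighborhood of $0$ in $H^1(\cB)\cong H^1(\cA)$ then gives, modulo $\ccG$, a complex $(b_2(M)-1)$-dimensional family of such deformations. The one point that genuinely requires the toric hypothesis is the passage from the decomposable elements of the earlier theorem to all of $H^1(\cA)$; this is exactly what the triviality of the $\TT^3$-action buys, since it renders the Hilbert--Mumford criterion vacuous and makes every deformation polystable.
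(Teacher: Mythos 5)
Your proposal is correct and follows essentially the same route as the paper: the paper's own argument is precisely that $\TT^3$ acts trivially on $H^1(\cA)=H^1(\cA)^{T^3}$, so every element is trivially polystable and the finite-dimensional GIT machinery (Propositions~\ref{prop:slice}, \ref{prop:map}, \ref{prop:finiteGIT}) together with the vanishing $H^2(\cA)=H^2(\cB)=0$ and the monotone transverse K\"ahler class yields a $(b_2(M)-1)$-dimensional family of Sasaki--Einstein deformations. You have merely spelled out the integrability and cscS-to-Einstein steps that the paper leaves implicit.
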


\bibliographystyle{amsplain}

\providecommand{\bysame}{\leavevmode\hbox to3em{\hrulefill}\thinspace}
\providecommand{\MR}{\relax\ifhmode\unskip\space\fi MR }
\providecommand{\MRhref}[2]{%
  \href{http://www.ams.org/mathscinet-getitem?mr=#1}{#2}
}
\providecommand{\href}[2]{#2}

\end{document}